\documentclass[12pt, a4paper]{article}

\usepackage[T1]{fontenc}
\usepackage{amsmath}
\usepackage{amsfonts}
\usepackage{amssymb}
\usepackage{amsthm}
\usepackage{enumitem}
\usepackage[normalem]{ulem}
\usepackage[mathscr]{euscript}
\usepackage{mathtools}
\usepackage{dsfont}
\usepackage{float}
\usepackage{times}
\usepackage{bbm}
\usepackage{xcolor}
\usepackage{xparse}
\usepackage[colorlinks,linkcolor=blue,citecolor=blue,urlcolor=blue]{hyperref}
\usepackage[capitalise]{cleveref}
\usepackage[left=3cm, right=3cm, bottom=2cm, top=2cm]{geometry}
\usepackage{tcolorbox}
\usepackage{tikz}
\usepackage{tikz-cd} 
\usetikzlibrary{calc}
\usetikzlibrary{arrows}
\usetikzlibrary{decorations.markings}
\usetikzlibrary{3d}
\usetikzlibrary{shapes.geometric}
\usepackage{tkz-euclide}
\usepackage{pgfplots}
\pgfplotsset{compat=1.17}
\def\centerarc[#1](#2)(#3:#4:#5)
{ \draw[#1] ($(#2)+({#5*cos(#3)},{#5*sin(#3)})$) arc (#3:#4:#5); }

\usepackage{calc}
\usepackage{accents}

\usepackage{thmtools}
\usepackage{thm-restate}


%
\newtheorem{theorem}{Theorem}[section]
\newtheorem*{theorem*}{Theorem}
\newtheorem{lemma}[theorem]{Lemma}
\newtheorem{proposition}[theorem]{Proposition}
\newtheorem{corollary}[theorem]{Corollary}

\theoremstyle{definition}

\newtheorem{notation}[theorem]{Notation}
\newtheorem{definition}[theorem]{Definition}
\newtheorem{remark}[theorem]{Remark}

\theoremstyle{remark}

\numberwithin{equation}{section}

\crefname{example}{Example}{Examples}
\Crefname{example}{Example}{Examples}

\crefname{assumption}{Assumption}{Assumptions}
\Crefname{assumption}{Assumption}{Assumptions}

\crefname{condition}{Condition}{Conditions}
\Crefname{condition}{Condition}{Conditions}

\usepackage[framemethod=TikZ]{mdframed}
\newmdtheoremenv{ftheorem}[theorem]{Theorem}
\newmdtheoremenv{fdefinition}[theorem]{Definition}
\newmdtheoremenv{flemma}[theorem]{Lemma}

\NewDocumentEnvironment{mytheorem}{m}%
  {%
    \begin{theorem}
  \end{theorem}
  }

%

\setlist{topsep=1ex, itemsep=0.5ex, before={\setlist{topsep=-.5ex}}}
\setlist[itemize]{label=\textbullet}

\newcommand{\R}{\ensuremath{\mathbb{R}}}

\newcommand{\II}{\ensuremath{I \! \! I}}

\def\f{\frac}
\def\epsilon{\varepsilon}

\NewDocumentCommand{\Lip}{om}{\IfNoValueTF{#1}{|#2|_{\mathrm{Lip}}}{|#2|_{\mathrm{Lip};\,#1}}}

\def\E{\mathbb E}

\renewcommand{\geq}{\geqslant}
\renewcommand{\leq}{\leqslant}

\def\${|\!|\!|}

\def\<{\left\langle}
\def\>{\right\rangle}

\newcommand{\vertiii}[1]{{\left\vert\kern-0.25ex\left\vert\kern-0.25ex\left\vert #1
\right\vert\kern-0.25ex\right\vert\kern-0.25ex\right\vert}}
\newcommand{\rom}[1]{(\textup{\uppercase\expandafter{\romannumeral#1}})}
\def\sgn{{\mathop {\rm sign}}}

\makeatletter
\newcommand{\substackal}[1]{%
\vcenter{%
\Let@ \restore@math@cr \default@tag
\baselineskip\fontdimen10 \scriptfont\tw@
\advance\baselineskip\fontdimen12 \scriptfont\tw@
\lineskip\thr@@\fontdimen8 \scriptfont\thr@@
\lineskiplimit\lineskip
\ialign{\hfil$\m@th\scriptstyle##$&$\m@th\scriptstyle{}##$\hfil\crcr
#1\crcr
}%
}%
}
\makeatother

\newcommand{\ps}{\frac{\partial}{\partial s}}

\newcommand{\pt}{\frac{\partial}{\partial t}}

\newcommand{\Dps}{\frac{D}{\partial s}}

\newcommand{\vol}{\mathrm{vol}}

\makeatletter
\newcommand{\newparallel}{\mathrel{\mathpalette\new@parallel\relax}}
\newcommand{\new@parallel}[2]{%
  \begingroup
  \sbox\z@{$#1T$}
  \resizebox{!}{\ht\z@}{\raisebox{\depth}{$\m@th#1/\mkern-5mu/$}}%
  \endgroup
}
\makeatother

\def\Xint#1{\mathchoice
{\XXint\displaystyle\textstyle{#1}}%
{\XXint\textstyle\scriptstyle{#1}}%
{\XXint\scriptstyle\scriptscriptstyle{#1}}%
{\XXint\scriptscriptstyle\scriptscriptstyle{#1}}%
\!\int}
\def\XXint#1#2#3{{\setbox0=\hbox{$#1{#2#3}{\int}$ }
\vcenter{\hbox{$#2#3$ }}\kern-.6\wd0}}

\def\dashint{\Xint-}

\definecolor{LB}{rgb}{0.29, 0.63, 0.73}

\def\second-form{ I\!\! I }

\providecommand{\keywords}[1]
{
  \small	
  \textbf{\textit{Keywords: }} #1
}

\begin{document}

\title{Coarse extrinsic curvature of Riemannian submanifolds} 
\author{Marc Arnaudon \thanks{Univ. Bordeaux, CNRS, Bordeaux INP, IMB, UMR 5251,
F-33400 Talence, France. \newline  \texttt{marc.arnaudon@math.u-bordeaux.fr}}, Xue-Mei Li \thanks{ Dept. of Maths, Imperial College London, U.K.  \& EPFL, Switzerland \newline  \texttt{xue-mei.li@imperial.ac.uk} or  \texttt{xue-mei.li@epfl.ch}}, Benedikt Petko\thanks{Dept. of Maths,  Imperial College London, U.K. \newline  \texttt{benedikt.petko15@imperial.ac.uk} or \texttt{benedikt.petko@gmail.com},\\ corresponding author}}
\date{\today}
\maketitle

\begin{abstract}

We introduce a novel concept of coarse extrinsic curvature for Riemannian submanifolds, inspired by Ollivier's notion of coarse Ricci curvature. This curvature is derived from the Wasserstein 1-distance between probability measures supported in the tubular neighborhood of a submanifold, providing new insights into the extrinsic curvature of isometrically embedded manifolds in Euclidean spaces. The framework also offers a method to approximate the mean curvature from statistical data, such as point clouds generated by a Poisson point process. This approach has potential applications in manifold learning and the study of metric embeddings, enabling the inference of geometric information from empirical data.
\end{abstract}

\keywords{coarse curvature, extrinsic curvature, optimal transport}

\tableofcontents

\section{Introduction}

 Synthetic lower bounds on Ricci curvature are a powerful tool in the study of classical geometric analysis and metric measure spaces. Ollivier's notion of Coarse Ricci Curvature is distinct in that it approximates the curvature itself, rather than merely providing a lower bound. By selecting as test measures weighted localized volume measures, supported on a ball of radius $\epsilon$, the Wasserstein 1-distance between two such measures reveals the generalized Ricci tensor;  applying to random geometric graphs sampled from a Poisson point process with non-uniform intensity leads to similar conclusions \cite{arnaudon2023coarse}.

Inspired by the concept of coarse Ricci curvature, in this article we seek a suitable notion of extrinsic curvature for embedded manifolds. With manifold learning applications in mind, we initially work with curves and surfaces and subsequently define a concept of coarse extrinsic curvature for general embedded manifolds. This notion captures the inner product between the mean curvature and the second fundamental form in a principal curvature direction. It may prove useful for studying embedded metric spaces and could be relevant in manifold learning contexts.

Let $M$ be a smooth manifold isometrically embedded in another Riemannian manifold. We propose a family of test measures $\{\mu^{\sigma, \varepsilon}_{x}, x\in M\}$, where $\sigma, \varepsilon$ are small parameters, whose `derivative' in the 1-Wasserstein distance with respect to variation of the point $x$ describes some kind of curvature.

This consideration leads to a novel concept of coarse curvature in the setting of Riemannian submanifolds. Within the applicable range of the parameters, we have an approximation of the mean curvature and the second fundamental form,  providing a valuable tool for evaluating these extrinsic curvatures.
In more practical applications, we can take test measures built from statistical data and simulations; for instance through the empirical measures of point cloud samples. There is scope for extending to metric embeddings of metric spaces.

In contrast to the intrinsic Riemannian curvature, which characterizes the geometry of a manifold independently of its embedding, the second fundamental form of submanifolds is an extrinsic concept. It provides a means for describing the shape of a submanifold in relation to its ambient space, offering views into its bending properties. For instance, a surface embedded in $\mathbb{R}^3$ is locally isometric to a plane if and only if its second fundamental form vanishes.

The extrinsic curvature of $M$, isometrically embedded in $N$, is expressed by the second fundamental form, which we recall to be defined as the bilinear form
\begin{equation}
\label{sff}
\II_x(w,w) := \nabla^N_w W(x) - \nabla^M_w W(x),
\end{equation}
where $W$ is an arbitrary vector field on $M$ with $W(x) = w$.  Letting $m$ denote the dimension of $M$,  the mean curvature is defined as the vector field
\begin{equation}
H(x) := \sum_{i=1}^m \nabla^N_{e_i} e_i(x) -\nabla^M_{e_i} e_i(x).
\end{equation}
Here $(e_i)_{i=1}^m$ is an arbitrary local orthonormal frame on a neighbourhood of $x$ in $M$. Note that we omit the factor of $\frac{1}{m}$ that usually appears in this definition in the literature in order to simplify the statement of our results. It is a standard fact that both $\II_x(w,w)$ and $H(x)$ are vectors which are perpendicular to the submanifold $M$. We refer to e.g. \cite[Chap. 5]{MR3726907} for a detailed treatment of these objects.
For instance, one of the examples we consider below is that of a planar curve $\gamma$ with radius of osculating circle $R(\alpha)$. A simple computation shows that in this case
$$
\|H(\gamma(\alpha))\| = \|\II_{\gamma(\alpha)}
(\dot{\gamma}(\alpha), \dot{\gamma}(\alpha))\| = \f{1}{R(\alpha)},
$$
where $\|\cdot\|$ is the Euclidean magnitude.

There exists a considerable body of literature on description of submanifold properties by tubular volume, of which we name a few representatives. The early work of Weyl \cite{MR1507388} proved the classical tube formula for submanifolds embedded in Euclidean spaces, which is an expansion with respect to the width of the tubular volume and its coefficients are geometric invariants of the submanifold.
Federer \cite{MR0110078} introduced the notion of boundary measures, which lead to generalization of the tube formula to compact subsets of Euclidean spaces.
More recent works of Chazal et al. \cite{MR2594445} \cite{MR3727578} studied geometric inference via point cloud approximations to boundary measures using Monte Carlo methods.
For a comprehensive treatment on properties of tubular neighbourhoods, we refer to the monograph \cite{gray-tubes}. The approach in our present work differs from the above in that it gives a local and directional information about the second fundamental form, and also the mean curvature.

Notions of synthetic Ricci curvature were motivated by the study of geometry of metric measure spaces and were pioneered by the seminal works \cite{ Sturm1, Lott-Villani,MR2502429}, see also the survey \cite{MR2408268}. In a metric measure space, a global lower bound on the synthetic Ricci curvature leads to properties of the metric measure space which are analogous to the Riemannian setting, such as the Poincar\'e and log-Sobolev inequalities, the concentration of measure phenomenon, and closure under measured Gromov-Hausdorff convergence \cite{MR2142879,MR3675945,Erbar-Maas}. We note also the related direction of the works \cite{MR2502429} \cite{Sturm21}.

To our understanding, there has not been a notion of a synthetic extrinsic curvature.  Our notion of coarse extrinsic curvature is inspired by coarse Ricci curvature of Ollivier \cite{MR2484937}, which is defined in the Riemannian setting through the expansion of the 1-Wasserstein distance of two uniform measures supported on geodesic balls of a small radius, the radius being the variable of expansion \cite[Example 7]{MR2484937}, see also the survey \cite{MR3060504}. This is different from the above mentioned synthetic Ricci curvature lower bounds in that it puts a precise number on the value of curvature at a point. Moreover, it can be applied to general metric spaces by choosing a family of measures indexed by points in the space for the evaluation of the 1-Wasserstein distance. Coarse Ricci curvature can be computed explicitly for a number of examples on graphs, where the measures are provided by a Markov chain. We adopt and modify Ollivier's approach to the submanifold setting by choosing suitable measures for the expansion of the 1-Wasserstein distance, showing that this yields a geometrically meaningful information.

As an immediate application of our result, we venture into the setting of \cite{hoorn-2023} and \cite{arnaudon2023coarse} to explore retrieval of curvature information from point clouds generated by a Poisson point process. In the first of the mentioned works, Hoorn et al. proved that Ollivier's coarse Ricci curvature of random geometric graphs sampled from a Poisson point process with increasing intensity on a Riemannian manifold converges in expectation at every point to the classical Ricci curvature of the manifold. This was extended in the second mentioned work to weighted Riemannian manifolds. In the present work, we show that coarse extrinsic curvature can recover the mean curvature in expectation at a point. In this case, it is not necessary to impose a graph structure to connect points of the sample.

In the context of deep learning, it is noteworthy that computational algorithms for effectively computing optimal transport maps have been proposed, as discussed in \cite{rout2022generativemodelingoptimaltransport, Gu-Yau}. Additionally, relevant work in the fields of manifold learning and inverse problems is worth mentioning. One particularly interesting inverse problem is whether an embedded manifold can be learned from a set of samples  $x_j+\xi_j$ where $x_j$ belongs to a submanifold $M\subset \R^{m+p}$ and $\xi_j$ are independent Gaussian random variables on $\R^{m+p}$.
 The reconstruction of embedded manifolds has been studied in \cite{Puchkin-Spokoiny-Trevisan, aigenbaum-Golovin-Levin, fefferman2021reconstructioninterpolationmanifoldsii, Fefferman-Ivanov-Kurylev-Lassas-Naraynan}. An algorithm for constructing an embedded submanifold is provided in \cite{fefferman2022fittingmanifoldlargereach}. Although manifold learning is still in its early stages, manifold approximation and reconstruction have a longer history, we point out some more recent publications on this topic \cite{Eilat-Klartag, genovese-Perone-Pacifico-Verdinelli, Boissonnat-Guibas-Oudot, Eilat-Klartag, Gold-Rosenberg}.

\subsection*{Main Results}

In our setting, $M$ is an $m$-dimensional compact Riemannian manifold embedded isometrically in a Euclidean space $\R^{m+k}$ and $M_\sigma$ is the local $\sigma$-tubular neighbourhood of $M$ in $\R^{m+k}$, defined for $\sigma$ sufficiently small as
$$
M_\sigma = \{x + v: x \in M, v \in T_xM^\perp, \|v\| \leq \sigma\}.
$$
For any compact subset $U \subset M$, the projection mapping from its tubular neighbourhood 
$$
\pi: U_\sigma \rightarrow U, \quad \pi(z) := \textrm{argmin}_{x \in U} \|z-x\|
$$
is well-defined for all $\sigma >0$ sufficiently small, with the same notation for $U_\sigma$ as above.

Denote by $\exp_{M,x}: T_xM \rightarrow M$ the exponential mapping in $M$ with base point $x$. Fix a point $x_0 \in M$, a unit tangent vector $v \in T_{x_0}M$ and denote $y := \exp_{M,x_0}(\delta v)$ for $\delta >0$. Fix a constant $\varepsilon_0 > 0$ smaller than the uniform injectivity radius of some fixed compact neighbourhood of $x_0$ in $M$. Assume $\delta, \varepsilon < \varepsilon_0 / 3$ so that $B_\varepsilon(x_0) \cup B_\varepsilon(y)$ lie within the uniform injectivity radius away from $x_0$ and assume $\sigma$ is small enough so that the projection $\pi$ is well-defined on the $\sigma$-tubular neighbourhood of the $\varepsilon_0$-geodesic ball at $x_0$ in $M$. These requirements on the parameters $\delta, \varepsilon, \sigma$ will henceforth be encapsulated in the assumption that they are "sufficiently small". This ensures that all locally defined maps are well-defined, in particular the projection map (smallness of $\sigma$) and the Fermi coordinates (smallness of $\delta$ and $\varepsilon$) used later on.

As our test measures, we choose the probability measures
$$
\mu^{\sigma, \varepsilon}_{x} (A)=\f{\mu(\pi^{-1}(B^M_\varepsilon(x)) \cap A \cap M_\sigma)}{\mu(\pi^{-1}(B^M_\varepsilon(x)) \cap M_\sigma)} \qquad \forall A \in \mathcal{B}(\R^{m+k}),
$$
where $B^M_\varepsilon(x)$ denotes the $\varepsilon$-geodesic ball at $x$ in $M$.
Note that these measures are supported on compact subsets of $M_\sigma$.  We seek to obtain the expansion of $W_1(\mu_{x_0}^{\sigma,\varepsilon}, \mu_y^{\sigma,\varepsilon})$ with respect to the parameters $\delta,\sigma$ and $\varepsilon$.

To relate the Wasserstein distance to the second fundamental form, we first localize to a tubular neighbourhood of a fixed open set on the submanifold. We expand the densities of the test measures in Fermi coordinates, and for the subsequent computations we rely on a crucial observation developed in \cref{approximate-transport-maps}:
if $T$ is an approximate transport map from $\mu_{x_0}^{\sigma,\varepsilon}$ to $\mu_y^{\sigma,\varepsilon}$, in a sense defined later, then $W_1(\mu_{x_0}^{\sigma,\varepsilon}, \mu_y^{\sigma,\varepsilon})$ is close to $W_1(\mu_{x_0}^{\sigma,\varepsilon}, T_*\mu_{x_0}^{\sigma,\varepsilon})$. The remaining task involves proposing a concrete approximate transport map, which is at the same time close enough to optimal.

When dealing with test measures on an embedded manifold, accounting for the effect of the bending of the submanifold in the ambient space becomes crucial. The proposed transport map is thus formulated in terms of the Fermi frame along $\gamma$, adapted to the submanifold $M$ in a way that separates tangent and normal coordinate directions at every point.

We give a rough outline of the proposed transport map, made precise in \cref{section-proposed-transport-map}. In terms of Fermi coordinates, if $\alpha =(\alpha_1,\ldots,\alpha_m)$ represent submanifold tangent directions with $\alpha_1$ being associated with the direction of $\gamma$, and if $\beta=(\beta_1,\ldots,\beta_k)$ represent the normal directions, an initial proposal informed by the circle example (\cref{section-circle-example}) was
$$
(\alpha, \beta) \mapsto (\delta-\alpha_1, \alpha_2, \dots, \alpha_m, \beta_1,\ldots,\beta_k).
$$
This can be construed as translation by $\delta$ in the direction of the first coordinate, together with reflection in the first coordinate.
From studying the planar curve example (\cref{planar-curve}), it turned out that an additional bending correction needs to be put on top of the $\beta$ components of the transport by adding terms involving the derivative of the mean curvature. Favourably, such a correction contributes to the final estimate of the Wasserstein distance only at the fourth order and higher, and hence does not interfere with the mean curvature term, which will appear at third order of the expansion. The test measures are first expressed in Fermi coordinates in \cref{section-fermi-coordinates}. The proposed transport map is then presented in \cref{section-proposed-transport-map}, where we prove that it is indeed an approximate transport map of degree $3$, i.e.
$$
\f{d(T_* \mu_{x_0}^{\sigma,\varepsilon})}{d\mu_y^{\sigma,\varepsilon}}(\phi(\alpha,\beta)) = 1 + O(\delta^3).
$$
This precision is sufficient for obtaining the 1-Wasserstein distance approximation (see \cref{approximate-transport-maps}):
$$
W_1(\mu_{x_0}^{\sigma,\varepsilon}, \mu_{y}^{\sigma,\varepsilon}) = W_1(\mu_{x_0}^{\sigma,\varepsilon}, T_* \mu_{x_0}^{\sigma,\varepsilon}) + O(\delta^4).
$$
From here the strategy is to construct a test function $f: B_{2\delta}(x_0) \subset \R^{m+k} \rightarrow \R$ with Lipschitz norm approximately $1$ and satisfying the estimate
$$
f(Tz) - f(z) = \|Tz - z\| + O(\delta^4) = O(\delta),
$$  which allows us to estimate the distance between the original measure and its transport by means of the relation  
$$
W_1(\mu_{x_0}^{\sigma,\varepsilon}, T_*\mu_{x_0}^{\sigma,\varepsilon}) = \int (f(Tz) - f(z)) d\mu_{x_0}^{\sigma,\varepsilon}(z) + O(\delta^4).
$$
On the whole, we find that the Wasserstein distance between the initial measure $\mu_{x_0}^{\sigma,\varepsilon}$ and the target measure $\mu_y^{\sigma,\varepsilon}$ is approximated by $\int_M \|Tz-z\| d\mu_{x_0}^{\sigma,\varepsilon}$ up to $O(\delta^4)$ (see \cref{test-function-gradient}), which is explicitly computable as an expansion in $\delta, \sigma$ and $\varepsilon$ with geometric quantities as coefficients.

Using the above tools, in Section 3 we thus compute the expansion of $W_1(\mu_{x_0}^{\sigma,\varepsilon}, \mu_y^{\sigma,\varepsilon})$, beginning with the case of a planar curve:

\begin{restatable*}{proposition}{planarcoarsecurvature}
Let $\gamma$ be a smooth unit speed curve in $\R^2$ such that $\gamma(0) = x_0$ and $\gamma(\delta)=y$. For all $\delta, \varepsilon, \sigma >0$ sufficiently small with $\sigma \vee \varepsilon \leq \f{\delta}{4}$, it holds that
$$
\begin{aligned}
W_1(\mu_{x_0}^{\sigma,\varepsilon}, \mu_{y}^{\sigma,\varepsilon})
&= \|x_0-y\| \left( 1-\f{\varepsilon^2}{6R^2} + \f{\sigma^2}{3R^2} \right) + O(\delta^4)
\end{aligned}
$$
where $R$ is the radius of the osculating circle of the curve at $x_0$.
\end{restatable*}

This expansion can be rearranged as
$$
\begin{aligned}
1- \f {W_1(\mu_{x_0}^{\sigma,\varepsilon}, \mu_{y}^{\sigma,\varepsilon})} { \|x_0-y\| }
&=\f{\varepsilon^2}{6R^2}   - \f{\sigma^2}{3R^2} + O(\delta^3).
\end{aligned}
$$
We refer to the quantity on the left as the coarse extrinsic curvature of $\gamma$ between $x_0$ and $y$ at scales $\sigma,\varepsilon$. A version of this result for spatial curves  is presented in \cref{space-curve}.
In \cref{surface-coarse-curvature}, we then proceed to study the case of coarse extrinsic curvature along a geodesic on a surface embedded in $\R^3$.

This work culminates with the most general form:

\begin{restatable*}{theorem}{coarseextrinsiccurvature}
\label{general-submanifold}
Let $M$ be an isometrically embedded submanifold of $\R^{m+k}$,and $\gamma$ a unit speed geodesic in $M$ such that $\gamma(0) = x_0$ and $\gamma(\delta)=y$. Let $(e_j)_{j=1}^m$ be an orthonormal basis of $T_{x_0}M$ with $e_1 = \dot{\gamma}(0)$ and assume that $\II_{x_0}(e_1,e_j) = \mathbf{0}$ for all $j=2,\ldots,m$.  Then for every $\sigma,\varepsilon, \delta >0$ sufficiently small with $\sigma \vee \varepsilon \leq \f{\delta}{4}$ it holds that
$$
\begin{aligned}
&W_1(\mu_{x_0}^{\sigma,\varepsilon}, \mu_y^{\sigma,\varepsilon}) = \|y-x_0\| \left(1 + \left(\f{\sigma^2}{k+2} - \f{\varepsilon^2}{2(m+2)} \right) \<\II_{x_0}(e_1,e_1), H(x_0)\> \right) + O(\delta^4).
\end{aligned}
$$
\end{restatable*}

The assumption on the second fundamental form is necessary for optimality of our proposed transport map up to sufficient order and can always be satisfied for submanifolds of codimension 1, in particular surfaces embedded in $\R^3$, by choosing the basis of principal curvature directions. Further commentary is provided in \cref{rmk:special-cases}.

To interpret such expansions in terms of mean curvature, we can remove the directionality of the above result caused by transport in the direction of $\gamma$. Denoting the square norm of the mean curvature vector as
$$
\|H(x_0)\|^2 = \sum_{i=1}^k \<H(x_0), \mathbf{n}_i(x_0)\>^2
$$
for an arbitrary orthonormal basis $(\mathbf{n}_i(x_0))_{i=1}^k$ of the normal space $T_{x_0}M^\perp \subset T_{x_0}N$, we deduce the following:

\begin{restatable*}{corollary}{coarsemeancurvature}
\label{removing-directionality}
Let $(e_j)_{j=1}^m$ be an orthonormal basis of $T_{x_0}M$, and for $j=1, \dots, m$,  let $y_j = \exp_{M, x_0}(\delta e_j)$. Assume that $\II_{x_0}(e_i,e_j) = 0$ for $i\neq j$. 
Then for all $\sigma,\varepsilon,\delta>0$ sufficiently small with $\sigma \vee \varepsilon \leq \frac{\delta}{4}$ it holds that
$$
\label{eq:removing-directionality}
\begin{aligned}
 \sum_{j=1}^m \left( 1- \f{W_1(\mu_{x_0}^{\sigma,\varepsilon}, \mu_{y_j}^{\sigma,\varepsilon})}{\|x_0-y_j\|}\right) &= \left( \f{\varepsilon^2}{2(m+2)} - \f{\sigma^2}{k+2} \right) \|H(x_0)\|^2 + O(\delta^3).
\end{aligned}
$$
\end{restatable*}

Observe that the left side of the equation is independent of the choice of orthonormal basis $(e_j)_{j=1}^m$ because the norm on the right side is basis-invariant. Moreover, the assumption on the second fundamental form always holds for submanifolds of codimension 1 (see \cref{rmk:special-cases}).

In \cref{point-cloud-curvature}, we deduce that the coarse extrinsic curvature of suitable test measures on Poisson point clouds sampled from the tubular neighbourhood retrieves the same extrinsic geometric information consistent with \cref{general-submanifold}.

One key ingredient in the proofs of the above theorems is the geometric approximate transport map introduced in \cref{proposed-transport-map}, defined by means of Fermi coordinates (as per \cref{definition-fermi-coordinates}) adapted to the submanifold. Test measures in these coordinates encode information about the second fundamental form of the submanifold. The proposed map is verified to be an approximate transport map between the test measures with sufficient order of accuracy, as specified and motivated in \cref{approximate-transport-maps}. The optimality up to fourth order is proved by choosing a concrete test function for the Wasserstein lower bound by the Kantorovich-Rubinstein duality.

In the resulting expansion of the Wasserstein distance, the second fundamental form at the fixed point $x_0$ appears at third order, and its derivatives appear at fourth and higher orders. As a consequence, information about the second fundamental form at a point can be retrieved in a suitably scaled limit of coarse curvature. Please see the discussion below for an example.

\subsubsection*{Discussion} 

We illustrate this work using the following prototypical example. Let $\gamma:(-\delta_0,\delta_0) \rightarrow \R^2$ be a smooth, unit speed planar curve, and $\mathbf{n}: (-\delta_0,\delta_0) \rightarrow \R^2$ a unit normal vector field along $\gamma$,  unique up to sign. Denote by $R(\alpha):= \f{1}{\|\ddot{\gamma}(\alpha)\|}$ the radius of the osculating circle at the point $\gamma(\alpha)$. To detect the extrinsic curvature at $x_0:=\gamma(0)$, captured here by $R(0)$, we define test probability measures centered at nearby points $y:=\gamma(\delta)$ indexed by $\delta >0$.

Denote $\mu$ the Lebesgue measure on $\R^2$, $M := \gamma((-\delta_0,\delta_0))$ as the image of the curve, and $M_{\sigma_0}$ as a small enough tubular neighbourhood of $M$ so that the orthogonal projection $\pi: M_{\sigma_0} \rightarrow M$ is well-defined. 
Denote
$$
B_{\sigma, \varepsilon}(y) := \{z \in \R^2: \|z-\pi(z)\| < \sigma, d_\gamma(y, \pi(z)) < \varepsilon\}
$$
where $d_\gamma$ is the distance along $\gamma$. Define for $\sigma, \varepsilon >0$ with $\sigma \vee \varepsilon \leq \f{\delta}{4}$, the Borel measure on $\R^2$,
$$
 \mu_{y}^{\sigma, \epsilon}(A) := \f{\mu(A \cap B_{\sigma, \varepsilon}(y))}{\mu(B_{\sigma, \varepsilon}(y))}.
$$
We compare these in 1-Wasserstein distance to the initial measure, i.e. when $\delta=0$ and is denoted $\mu_{x_0}^{\sigma, \varepsilon}$. The Wasserstein distance has the form:
$$
W_1(\mu_{x_0}^{\sigma, \epsilon}, \mu_y^{\sigma, \epsilon}) = \|x_0-y\| \left(1- \f{\varepsilon^2}{6R(0)^2} +\f{\sigma^2}{3R(0)^2}\right) + O(\delta^4).
$$
Rearranging this expansion yields
\begin{equation}
\label{planar-curve-expansion}
1-\f {W_1(\mu_{x_0}^{\sigma, \varepsilon}, \mu_y^{\sigma, \varepsilon})} {\|x_0-y\|} = \f{1}{R(0)^2} \left(\f{\varepsilon^2}{6}-\f{\sigma^2}{3}\right) + O(\delta^3).
\end{equation}

From this point, depending on the application, we may consider three different regimes for the parameters $\varepsilon$ and $\sigma$ as $y$ converges to $x_0$. We recall the asymptotic notation $\sigma = \Theta(\delta)$ means there exist $c,C,\delta_0 >0$ such that for all $\delta < \delta_0$,
$$
c \delta < \sigma(\delta) < C \delta,
$$
and $\sigma = o(\delta)$ means $\lim_{\delta \rightarrow 0} \f{\sigma(\delta)}{\delta} = 0$.
\begin{enumerate}
    \item $\lim_{\delta \to 0} \f{\varepsilon(\delta)}{\sigma(\delta)} = C \neq \sqrt{2}$ for some known constant $C >0$, i.e. the decay of both $\sigma$ and $\varepsilon$ is controlled. In this case,
    $$
    \frac{1}{R(0)^2} = \lim_{\delta \rightarrow 0} -\f{6}{(C^2-2)\sigma^2}
\left( 1- \f {W_1(\mu_{x_0}^{\sigma, \varepsilon}, 
\mu_y^{\sigma, \varepsilon})} {\|x_0-y\|} \right),
    $$
    \item $\sigma = \Theta(\delta)$ and $\varepsilon = o(\delta)$, i.e. the decay of $\sigma$ is controlled, while the parameter of support size $\varepsilon$ vanishes fast. In this case,
    $$
    \frac{1}{R(0)^2} = \lim_{\substack{\varepsilon = o(\sigma), \sigma = \Theta(\delta), \\ \delta \rightarrow 0}} -\f 3{\sigma^2}
\left( 1- \f {W_1(\mu_{x_0}^{\sigma, \varepsilon}, 
\mu_y^{\sigma, \varepsilon})} {\|x_0-y\|} \right),
    $$
    \item $\varepsilon = \Theta(\delta)$ and $\sigma = o(\delta)$, i.e. the decay of $\varepsilon$ is controlled, while the size of the tubular neighbourhood $\sigma$ vanishes fast. In this case,
    $$
    \frac{1}{R(0)^2}= \lim_{\substack{\sigma = o(\varepsilon), \varepsilon = \Theta(\delta),\\ \delta \rightarrow 0}} \f{6}{\varepsilon^2}
    \left( 1- \f {W_1(\mu_{x_0}^{\sigma, \varepsilon}, \mu_y^{\sigma, \varepsilon})} {\|x_0-y\|} \right).
    $$
\end{enumerate}
The requirements $\sigma = \Theta(\delta)$ and $\varepsilon = \Theta(\delta)$ in the respective cases are in place to ensure the remainder term $O(\delta^3)$ in \eqref{planar-curve-expansion} does not explode upon division by $\sigma^2$ (resp. $\varepsilon^2$) in the limit as $\delta \rightarrow 0$.

In light of the above discussion, we may define the coarse extrinsic curvature between $x_0$ and $y$ at scales $\varepsilon, \sigma >0$ as:
\begin{equation}
\label{coarse-extrinsic-curvature-def}
\kappa_{\sigma, \varepsilon}(x_0,y)  :=1-\f {W_1(\mu_{x_0}^{\sigma, \varepsilon}, \mu_y^{\sigma, \varepsilon})} {\|x_0-y\|}.
\end{equation}
This quantity can be estimated from point cloud data and used for geometric inference.

In summary, this work focuses on Riemannian submanifolds embedded isometrically in Euclidean spaces with the aim of producing a reasonable measurement for the bending energy. This bending energy can also be estimated from point clouds obtained from sampling. One of the novel ingredients is the construction of a test function for using the Kantorovich-Rubinstein duality to obtain a lower bound for the Wasserstein distance in this setting.

The outline of this work is as follows. In \cref{preliminaries}, we establish geometric preliminaries pertaining to the volumes of tubular neighbourhoods and present approximate transport maps as a novel tool for approximating the 1-Wasserstein distance. In \cref{section:computed-examples}, we give description of coarse extrinsic curvature for a planar curve, space curve and a 2-surface embedded in $\R^3$. The coarse extrinsic curvature of a general submanifold of arbitrary codimension is studied in \cref{section:riemannian-submanifolds}. We present several immediate corollaries to our results with practical applications in \cref{section:applications}. Although the cases of curves and surfaces in \cref{section:computed-examples} are just instances of the general result in \cref{section:riemannian-submanifolds}, they  provide value in understanding this general case. Sections 3 and 4 can be read separately after reading Section 2, which contains all preliminaries.

\section{Preliminaries}
\label{preliminaries}
We prove a formula for volume growth of tubular neighbourhoods of submanifolds, leading to a disintegration of the ambient volume measure adapted to the submanifold. This formula is subsequently utilized  to derive explicit formulas for such disintegration in Fermi coordinates, considering cases such as a planar curve, space curve, and a surface in \cref{section:computed-examples}, and general Riemannian submanifolds in \cref{section:riemannian-submanifolds}.

Following the geometric preliminaries, we introduce the notion of an approximate transport map, enabling the computation of Wasserstein distances up to a sufficiently high degree of error.  Subsequently, we define the test measures to be transported and their representation in Fermi coordinates. Finally, we propose a transport map to evaluate the Wasserstein distance of these test measures.

\subsection{Ambient volume disintegration}
We begin with a simple lemma on evolution of probability densities. We denote $\mathcal{P}(\mathcal{X})$ as the space of probability measures  on a measurable space $(\mathcal{X}, \mathcal{M})$. The notation $\mu \ll \nu$ denotes the fact that the measure $\mu$ is absolutely continuous with respect to the measure~$\nu$.

\begin{lemma}
\label{density-dynamics-lemma}
    Consider  $(\mu_t)_{t \geq 0} \subset \mathcal{P}(\mathcal{X})$ such that $\mu_t \ll \mu_s$ for all $s\leq t$. Let $h_t: \mathcal{X} \rightarrow \mathbb{R}, t \ge 0,$ be a family of functions with $t \mapsto h_t(x)$  locally integrable, and such that $\left.\frac{d}{ds}\right|_{s=0}\frac{d\mu_{t+s}}{d\mu_t}(x) = h_t(x)$ for every $t \geq 0$.  Then
    $$
    \frac{d\mu_t}{d\mu_0}(x) = e^{\int_0^t h_s(x)ds}.
    $$
\end{lemma}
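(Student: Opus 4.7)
The plan is to reduce the statement to a one-variable linear ODE in $t$, solved pointwise in $x$. Set $f_t(x) := \frac{d\mu_t}{d\mu_0}(x)$, so $f_0 \equiv 1$ on the support of $\mu_0$; the goal is to show $f_t(x) = \exp(\int_0^t h_s(x)\,ds)$.

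First I would apply the chain rule for Radon–Nikodym derivatives, which is available because the hypothesis gives $\mu_{t+s} \ll \mu_t \ll \mu_0$ for $s \ge 0$: for each fixed $t,s \ge 0$ and $\mu_0$-almost every $x$,
$$
f_{t+s}(x) \;=\; \frac{d\mu_{t+s}}{d\mu_t}(x)\cdot f_t(x).
$$
Differentiating in $s$ at $s=0$ and using the assumption $\partial_s|_{s=0}\frac{d\mu_{t+s}}{d\mu_t}(x) = h_t(x)$ together with $\frac{d\mu_t}{d\mu_t}\equiv 1$, one obtains the linear ODE
$$
\partial_t f_t(x) \;=\; h_t(x)\, f_t(x), \qquad f_0(x)=1,
$$
whose unique solution under local integrability of $t\mapsto h_t(x)$ is the claimed exponential $f_t(x) = \exp(\int_0^t h_s(x)\,ds)$.

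The main obstacle is managing the $\mu_0$-null sets that arise in each application of the chain rule: strictly speaking, the displayed identity holds off a null set depending on $(t,s)$, so one cannot blindly differentiate pointwise in $t$. I would overcome this by fixing jointly measurable versions of the density fields $(t,x)\mapsto f_t(x)$ and $(t,s,x)\mapsto \frac{d\mu_{t+s}}{d\mu_t}(x)$, then testing the identity against an arbitrary bounded measurable $\varphi$ and using Fubini to exchange integration against $\mu_0$ with differentiation in $t$. The local integrability hypothesis on $h_t$ then lets us invoke the fundamental theorem of calculus to conclude that the ODE holds in the distributional sense in $t$ for $\mu_0$-a.e. $x$, from which the exponential formula follows by the standard uniqueness argument (differentiating $e^{-\int_0^t h_s(x)\,ds} f_t(x)$ gives zero).
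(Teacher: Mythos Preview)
Your argument is correct and follows the same route as the paper: write $f_t = d\mu_t/d\mu_0$, use the chain rule $f_{t+s} = (d\mu_{t+s}/d\mu_t)\,f_t$, differentiate at $s=0$ to obtain the linear ODE $\partial_t f_t = h_t f_t$ with $f_0=1$, and conclude by uniqueness. Your second paragraph on handling the $(t,s)$-dependent null sets is more careful than the paper's proof, which simply invokes ``standard ODE theory'' without addressing this point.
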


\begin{proof}
    The change of density at any $t \geq 0$ satisfies
    $$
    \left.\frac{d}{ds}\right|_{s=0}\frac{d\mu_{t+s}}{d\mu_t}(x) = \left.\frac{d}{ds}\right|_{s=0}\frac{\frac{d\mu_{t+s}}{d\mu_0}(x)}{\frac{d\mu_t}{d\mu_0}(x)}=h_t(x)
    $$
    implying
    $$
    \left.\frac{d}{ds}\right|_{s=0}\frac{d\mu_{t+s}}{d\mu_0}(x) = h_t(x) \frac{d\mu_t}{d\mu_0}(x)
    $$
    which has the unique solution $\frac{d\mu_t}{d\mu_0}(x) = e^{\int_0^t h_s(x)ds}$ by standard ODE theory.
\end{proof}

\begin{notation}
\label{notation-submanifold}
Throughout this article,  $M$ is a compact Riemannian manifold of dimension $m$,  isometrically immersed in a Riemannian manifold $N$ of dimension $n $. Set $k:=n-m$. Let $\sigma_0>0$ be a fixed number smaller than half the reach of $M$ in $N$. The reach is defined as the maximal number $r$ such that each point within a distance $r$ from $M$ has a unique orthogonal projection to $M$,  $\pi: M_{\sigma_0} \rightarrow M$. The projection map is well defined within the `reach'.
 
Let $U \subset M$ be a sufficiently small open neighbourhood such that there exists an orthonormal frame of unit normal vector fields $(\mathbf{n}_1, \ldots, \mathbf{n}_k)$ on $U$ and a one-parameter family of vector fields $\{(e_i(s))_{i=1}^{m}: s\in (-\sigma_0, \sigma_0)\}$ such that $(e_i(s))_{i=1}^m$ is an orthonormal frame on $\psi_s(U)$ for every $s \in (-\sigma_0,\sigma_0)$, and $s \mapsto e_i(s)$ is smooth for every $i=1,\ldots,m$. The latter can be constructed by taking the pushforward of an arbitrary initial orthonormal frame by $\psi_s$, denoted by $(D_{e_i(0)} \psi_s)_{i=1}^m$, and applying the Gram-Schmidt orthonormalization procedure.
\end{notation}

\begin{definition}
Let $\mathbf{n}\in \Gamma(TU^\perp)$ be a unit normal vector field, and define the normal flow $\psi: M \times (-2\sigma_0, 2\sigma_0)  \rightarrow N$ by
$$
\psi_t(x) := \exp_{N,x} (t\mathbf{n}(x))
$$
where $\exp_{N,x}: T_xN \rightarrow N$ denotes the exponential mapping on $N$. Denote by $\newparallel_t^N$ the parallel transport with respect to the Levi-Civita connection $\nabla^N$ along $t \mapsto \psi_t(x)$ for a fixed $x\in M$, and note that $\f{\partial}{\partial t} \psi_t(x) = \; \newparallel_t^N \mathbf{n}(x)$.

For every $t \in (-2\sigma_0, 2\sigma_0)$, $\psi_t$ is a diffeomorphism onto its image, and $t\mapsto \psi_t(x)$ is smooth with non-vanishing derivative for every $x\in M$. Equip every $\psi_t(M)$ with the Riemannian metric inherited from the ambient space. The mean curvature of the leaf $\psi_s(U)$ is then given by
\begin{equation}
\label{mean-curvature}
H(\psi_s(x)) = \sum_{i=1}^m \nabla^N_{e_i} e_i(\psi_s(x)) - \nabla^M_{e_i} e_i(\psi_s(x)).
\end{equation}
In particular, for any unit normal vector field $\mathbf{n}$ on $U$, 
\begin{equation}
\<H(\psi_s(x)), \newparallel_s^N \mathbf{n}(x)\> = \sum_{i=1}^m \<\newparallel^N_s \mathbf{n}(x), \nabla^N_{e_i} e_i(\psi_s(x))\>.
\end{equation}
\end{definition}

The following lemma shows $\newparallel_t^N \mathbf{n}$ stays normal to the leaves $\psi_t(U)$ as $t$ changes.

\begin{lemma}
\label{normal-to-leaf}
    The vector field $\newparallel_t^N \mathbf{n}(x)$ is normal to $\psi_t(U)$ for every $t \in (-\sigma_0, \sigma_0)$, i.e. $\<D_{e_i} \psi_t, \newparallel^N_t \mathbf{n}\> = 0$ for any local tangent frame $(e_i)_{i=1}^m$ on $M$.
\end{lemma}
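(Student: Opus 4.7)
The plan is to fix $x \in U$ and an index $i$, choose a curve $\gamma:(-\epsilon,\epsilon) \to U$ with $\gamma(0) = x$ and $\dot\gamma(0) = e_i(x)$, and study the two-parameter variation
$$
\Phi(s,t) := \psi_t(\gamma(s)), \qquad s \in (-\epsilon,\epsilon), \ t \in (-\sigma_0,\sigma_0).
$$
Then $\partial_s \Phi(0,t) = D_{e_i(x)}\psi_t$ and $\partial_t \Phi(s,t) = \newparallel_t^N \mathbf{n}(\gamma(s))$, so the quantity we must control is
$$
\varphi(t) := \bigl\langle \partial_s \Phi(0,t),\ \partial_t \Phi(0,t)\bigr\rangle.
$$
At $t=0$ we have $\varphi(0) = \langle e_i(x), \mathbf{n}(x)\rangle = 0$, since $e_i$ is tangent to $M$ and $\mathbf{n}$ normal to $M$. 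It therefore suffices to show $\varphi'(t) \equiv 0$.

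Differentiating with the Levi-Civita connection $\nabla^N$ gives
$$
\varphi'(t) = \bigl\langle \nabla^N_{\partial_t}\partial_s \Phi,\ \partial_t \Phi\bigr\rangle + \bigl\langle \partial_s\Phi,\ \nabla^N_{\partial_t}\partial_t \Phi\bigr\rangle.
$$
The second term vanishes: for fixed $s$, the curve $t \mapsto \partial_t\Phi(s,t) = \newparallel_t^N \mathbf{n}(\gamma(s))$ is by definition parallel along $t \mapsto \psi_t(\gamma(s))$, so $\nabla^N_{\partial_t}\partial_t \Phi \equiv 0$. For the first term I would invoke the torsion-free symmetry $\nabla^N_{\partial_t}\partial_s \Phi = \nabla^N_{\partial_s}\partial_t \Phi$ and then exploit the fact that parallel transport preserves norms: since $\|\partial_t \Phi(s,t)\| \equiv \|\mathbf{n}(\gamma(s))\| = 1$, differentiating in $s$ yields
$$
0 = \partial_s \langle \partial_t \Phi,\partial_t\Phi\rangle = 2\bigl\langle \nabla^N_{\partial_s}\partial_t\Phi,\ \partial_t\Phi\bigr\rangle,
$$
so $\nabla^N_{\partial_s}\partial_t \Phi$ is orthogonal to $\partial_t \Phi$, killing the remaining term. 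Hence $\varphi'(t)=0$, and combined with $\varphi(0)=0$ we conclude $\langle D_{e_i}\psi_t,\newparallel_t^N \mathbf{n}\rangle = 0$ for all $t$.

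I do not anticipate a real obstacle here, since the argument is essentially the standard Gauss lemma-style computation for the normal exponential. The only subtlety is bookkeeping: making sure the symmetry lemma is applied to a genuine two-parameter variation and that the vector field $\newparallel_t^N \mathbf{n}$, defined pointwise by parallel transport along the normal geodesics, is smooth enough in $x$ to support the swap $\nabla^N_{\partial_t}\partial_s = \nabla^N_{\partial_s}\partial_t$. Smoothness in $x$ follows from the smooth dependence of geodesics and parallel transport on initial data, so no delicate issue arises.
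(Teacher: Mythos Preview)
Your proposal is correct and is essentially the same argument as the paper's: both differentiate $\langle D_{e_i}\psi_t,\newparallel_t^N\mathbf{n}\rangle$ in $t$, use torsion-freeness to swap the order of covariant derivatives, and then kill the remaining term via $\partial_s\|\newparallel_t^N\mathbf{n}\|^2=0$, concluding from the initial condition at $t=0$. The only cosmetic difference is that you introduce an explicit two-parameter variation $\Phi(s,t)$ and spell out separately that $\nabla^N_{\partial_t}\partial_t\Phi=0$, whereas the paper absorbs that step into the first line of its chain of equalities.
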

\begin{proof}
    For every $t \in (-\sigma_0, \sigma_0)$, $\psi_t$ being a diffeomorphism implies that if $(e_i)_{i=1}^m$ is a frame on $U$, then $(D_{e_i} \psi_t)_{i=1}^m$ is a frame on $\psi_t(U)$, not necessarily orthonormal. Then
    $$
    \begin{aligned}
        \f{d}{dt} \< D_{e_i} \psi_t, \newparallel_t^N \mathbf{n} \> &= \<\f{D^N}{\partial t} D_{e_i} \psi_t, \newparallel_t^N \mathbf{n}\> \\
        &= \< \nabla^N_{e_i} \f{\partial}{\partial t} \psi_t, \newparallel_t^N \mathbf{n} \> \\
        &= \< \nabla^N_{e_i} \newparallel_t^N \mathbf{n}, \newparallel_t^N \mathbf{n}\> \\
        &=\f{1}{2} D_{e_i} \< \newparallel_t^N \mathbf{n}, \newparallel_t^N \mathbf{n} \> \\
        &= \frac{1}{2}D_{e_i} 1 = 0.
    \end{aligned}
    $$
    where on the second line we used that $\f{\partial}{\partial t} \psi_t(x) = \; \newparallel_t^N \mathbf{n}(x)$.
    The initial condition $\psi_0 = \textrm{id}$ gives $\< D_{e_i} \psi_0, \mathbf{n} \> = \< e_i, \mathbf{n} \> = 0$, so we may conclude that $\newparallel_t^N \mathbf{n}$ is normal to all tangent directions on $\psi_t(U)$ for all $t$.
\end{proof}

The action of push-forwards of volume forms on any orthonormal basis of tangent vectors is characterized by the determinant of the mapping which we make precise below. Let $M_1, M_2$ be Riemannian manifolds of the same dimension $m$, $\psi: M_1 \rightarrow M_2$ a diffeomorphism, $(e_i)_{i=1}^m$ an orthonormal frame on an open set $U_1 \subset M_1$ and $(\tilde{e}_i)_{i=1}^m$ an orthonormal frame on an open set $U_2 \subset M_2$, and $(e^i)_{i=1}^m, (\tilde{e}^i)_{i=1}^m$ the corresponding coframes characterized by $e^i(e_j) = \delta^i_j, \tilde{e}^i(\tilde{e}_j) = \delta^i_j$. Below, by the determinant of $D\psi^{-1} (x) : T_x M_2\to T_{\Psi^{-1}(x)}M_1$ we mean that of the matrix representing the map in these bases:
$$\det D\psi^{-1}=\sum_{\sigma \in S_m} \prod_{i=1}^m \sgn(\sigma)  e^i(\psi^{-1}_* \tilde{e}_{\sigma(i)}) .$$
By the rules of differential forms acting on tangent vectors, $\forall x \in U_2$,
$$
\begin{aligned}
\psi_* (e^1 \wedge \ldots \wedge e^m)(x)(\tilde{e}_1(x),\ldots, \tilde{e}_m(x))
&= (e^1 (x)\wedge \ldots \wedge e^m(x))(\psi^{-1}_* \tilde{e}_1(x),\ldots, \psi^{-1}_* \tilde{e}_m(x)) \\
&= \sum_{\sigma \in S_m} \prod_{i=1}^m \sgn(\sigma)  e^i(x)(\psi^{-1}_* \tilde{e}_{\sigma(i)(x)})  \\
&= \det D\psi^{-1}.
\end{aligned}
$$
Since linear maps are determined by their values on basis vectors, we may deduce
\begin{equation}\label{change-of-variable-forms}
\psi_* (e^1 \wedge \ldots \wedge e^m)(x) = \det D\psi^{-1}(x) \tilde{e}^1 \wedge \ldots \wedge \tilde{e}^m (x).
\end{equation}
With the above notation we return to the exponential map $\psi_t(x) := \exp_{N,x} (t\mathbf{n}(x))$.

\begin{proposition}[Change of volume]
\label{volume-growth}
For every $t \in (-\sigma_0, \sigma_0)$,
\begin{equation}
\label{flow-determinant}
\det D \psi_t(x) = \exp\left(-\int_0^t \<H(\psi_s(x)), \newparallel^N_s \mathbf{n}(x)\>  ds \right),
\end{equation}
and hence the volume of the image of any Borel measurable $A \subset U$ can be expressed as
\begin{equation}
\label{pushforward-volume}
\textrm{vol}_{\psi_t(M)}(\psi_t(A)) = \int_A \exp\left(-\int_0^t \<H(\psi_s(x)), \newparallel^N_s \mathbf{n}(x)\> ds \right) d\vol_M(x)
\end{equation}
where $\vol_{\psi_t(M)}$ is the Riemannian volume on $\psi_t(M)$.
\end{proposition}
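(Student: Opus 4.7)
The plan is to establish the ODE
$$
\f{d}{dt} \log \det D\psi_t(x) = -\<H(\psi_t(x)), \newparallel_t^N \mathbf{n}(x)\>
$$
with initial condition $\det D\psi_0(x) = 1$; integration (in the spirit of \cref{density-dynamics-lemma}, or by elementary ODE theory) then yields \eqref{flow-determinant}, and the volume identity \eqref{pushforward-volume} follows from the change of variables formula applied via \eqref{change-of-variable-forms}. Rather than differentiate $\det D\psi_t$ at an arbitrary time directly, I would reduce to differentiation at $s=0$ via a semigroup decomposition of the flow.

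Concretely, fix $t_0 \in (-\sigma_0, \sigma_0)$. By \cref{normal-to-leaf}, the vector field $\tilde{\mathbf{n}}(\psi_{t_0}(x)) := \newparallel_{t_0}^N \mathbf{n}(x)$ is a unit normal field on the leaf $\psi_{t_0}(U)$. Let $\phi_s$ denote the normal exponential flow from $\psi_{t_0}(U)$ along $\tilde{\mathbf{n}}$. Since the radial geodesic through $x$ with initial velocity $\mathbf{n}(x)$ has velocity $\newparallel_t^N \mathbf{n}(x)$ at time $t$, one has $\psi_{t_0+s} = \phi_s \circ \psi_{t_0}$ on $U$, whence
$$
\det D\psi_{t_0+s}(x) = \det D\phi_s(\psi_{t_0}(x)) \cdot \det D\psi_{t_0}(x).
$$
Differentiating at $s=0$ and using $\phi_0 = \mathrm{id}$ reduces the matter to computing $\f{d}{ds}\big|_{s=0} \det D\phi_s(\psi_{t_0}(x))$, which is precisely the $t=0$ instance of the target formula applied to the submanifold $\psi_{t_0}(U) \subset N$ with normal $\tilde{\mathbf{n}}$.

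For this initial derivative I would employ the one-parameter orthonormal frames $(e_i(s))_{i=1}^m$ from \cref{notation-submanifold} and analyze the matrix $A_{ij}(s) := \<D\phi_s\, e_i(t_0), e_j(t_0+s)\>$, noting $A(0) = I$. By Jacobi's formula, $\f{d}{ds}\big|_{s=0} \det A(s) = \sum_i A'_{ii}(0)$. The $i$th diagonal entry splits into two contributions: $\<\nabla^N_{e_i(t_0)} \tilde{\mathbf{n}}, e_i(t_0)\>$, obtained from $\f{\partial}{\partial s}\phi_s\big|_{s=0} = \tilde{\mathbf{n}}$ and the symmetry of mixed covariant derivatives (exactly as in the computation of \cref{normal-to-leaf}), together with $\<e_i(t_0), \f{D^N}{ds}\big|_{s=0} e_i(t_0+s)\>$, which vanishes upon differentiating $\|e_i(s)\|^2 = 1$. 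Using $\<\tilde{\mathbf{n}}, e_i\> = 0$ on $\psi_{t_0}(U)$ to rewrite $\<\nabla^N_{e_i} \tilde{\mathbf{n}}, e_i\> = -\<\tilde{\mathbf{n}}, \nabla^N_{e_i} e_i\>$ and invoking the definition \eqref{mean-curvature} of the mean curvature of the leaf, the trace evaluates to $-\<H(\psi_{t_0}(x)), \newparallel_{t_0}^N \mathbf{n}(x)\>$, which is exactly the claimed log-derivative.

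The main technical obstacle is the careful bookkeeping across the moving leaves: each of the relevant vectors lives in a different tangent space as $s$ varies, and one must ensure that $\tilde{\mathbf{n}}$ is normal to the whole leaf $\psi_{t_0}(U)$ rather than merely at $t=0$, which is precisely the content of \cref{normal-to-leaf}. Once the ODE is in hand, \eqref{flow-determinant} follows by integration from $0$ to $t$ together with $\det D\psi_0 = 1$, and the identity \eqref{pushforward-volume} is a routine consequence of the change of variables formula $\textrm{vol}_{\psi_t(M)}(\psi_t(A)) = \int_A \det D\psi_t(x)\, d\vol_M(x)$.
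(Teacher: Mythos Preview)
Your proposal is correct and follows essentially the same route as the paper: both extend the flow via the semigroup property $\psi_{t_0+s} = \tilde\psi_s \circ \psi_{t_0}$ (your $\phi_s$ is the paper's $\tilde\psi_s$), apply Jacobi's formula to the matrix $\<D\tilde\psi_s\, e_i(t_0), e_j(t_0+s)\>$, and obtain the trace $-\<H(\psi_{t_0}(x)), \newparallel_{t_0}^N\mathbf{n}(x)\>$ via the same manipulations (including the vanishing of $\<e_i, \f{D}{ds} e_i\>$). The only cosmetic difference is that the paper phrases the computation at the level of pulled-back volume measures on $A$ before invoking \cref{density-dynamics-lemma}, whereas you work pointwise with $\log\det D\psi_t$ from the outset.
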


\begin{proof}
First, we extend the map $\psi: (-\sigma_0,\sigma_0) \times U \rightarrow N$ to $\tilde{\psi}: (-\sigma_0,\sigma_0) \times U_{\sigma_0} \rightarrow N$ by the flow condition
$$
\forall s,t \in (-\sigma_0,\sigma_0): \tilde{\psi_s}(\psi_t(x)) := \psi_{t+s}(x)
$$
for all $s$ and $t$ in $(-\sigma_0,\sigma_0)$.
This determines $\tilde{\psi}$ uniquely because $\{\psi_t(U)\}_{t \in (-\sigma_0,\sigma_0)}$ is a foliation of the tubular neighbourhood $U_{\sigma_0}$. Then on every leaf $\psi_t(U)$ of the foliation, we have $\tilde{\psi}_0 = \textrm{id}$. If $(e^i)_{i=1}^m$ and $(\tilde{e}^i)_{i=1}^m$ are orthonormal coframes on $\psi_{t+s}(U)$ and $\psi_t(U)$ respectively, the change of variable formula for volume forms \eqref{change-of-variable-forms} states that
$$
(\tilde{\psi}_{-s})_* e^1 \wedge \ldots \wedge e^n = (\det D \tilde{\psi}_s) \tilde{e}^1 \wedge \ldots \wedge \tilde{e}^n.
$$
Then for every $A \subset U$ Borel measurable and $s \in (-\sigma_0,\sigma_0)$,
$$
\begin{aligned}
\vol_{\psi_{t+s}(M)}(\psi_{t+s}(A)) &= \vol_{\psi_{t+s}(M)}(\tilde{\psi}_s(\psi_{t}(A))) \\
&=  ((\tilde{\psi}_{-s})_* \vol_{\psi_{t+s}(M)}) (\psi_t(A)) \\
&= \int_{\psi_t(A)} \det D \tilde{\psi}_s(x) d \vol_{\psi_t(M)}(x)
\end{aligned}
$$
using respectively the flow property, definition of the push-forward of a measure, and the change of variable formula with $\vol_{\psi_{t+s}(M)}= e^1 \wedge \ldots \wedge e^m$ and $\vol_{\psi_{t}(M)} = \tilde{e}^1 \wedge \ldots \wedge \tilde{e}^m$.

Denoting by $\f{D}{\partial s}$ the covariant derivative along $s \mapsto \tilde{\psi}_s$, the Jacobi formula for the derivative of determinants gives
$$
\begin{aligned}
\left.\ps\right|_{s=0}& \textrm{vol}(\psi_{t+s}(A)) = \int_{\psi_t(A)} \left.\ps\right|_{s=0} \det D \tilde{\psi}_s(x) d \vol_{\psi_t(M)}(x)\\
&= \int_{\psi_t(A)} \textrm{trace}\left(\left(\left.\ps\right|_{s=0} \<D_{e_i(t)} \tilde{\psi}_s(x),e_j(t+s)(x) \> \right)_{i,j=1,\ldots,m} \right) d \vol_{\psi_t(M)}(x)\\
&= \sum_{i=1}^m \int_{\psi_t(A)} \< \left.\Dps\right|_{s=0} D_{e_i(t)} \tilde{\psi}_s (x), e_i(t)(x)\> d \vol_{\psi_t(M)}(x)\\
&= \sum_{i=1}^m \int_{\psi_t(A)} \< \nabla^N_{e_i(t)} \left.\ps\right|_{s=0} \tilde{\psi}_s (x), e_i(t)(x)\> d \vol_{\psi_t(M)}(x)\\
&=-\sum_{i=1}^m \int_{\psi_t(A)} \< \left.\ps\right|_{s=0} \tilde{\psi}_{s}(x), \nabla^N_{e_i(t)} e_i(t)(x) \> d\textrm{vol}_{\psi_t(M)}(x) \\
&=-\sum_{i=1}^m \int_{A} \< \left.\ps\right|_{s=0} \psi_{t+s}(x), \nabla^N_{e_i(t)} e_i(t)(\psi_t(x)) \> d(\psi_t^{-1})_*\textrm{vol}_{\psi_t(M)}(x).
\end{aligned}
$$
 From the second to third line, the other term coming from the product rule applied on the bracket does not contribute to the trace, because for $i=j$,
$$
\<D_{e_i(t)} \tilde{\psi}_0, \f{D}{dt} e_i(t) \>=  \<e_i(t), \f{D}{dt} e_i(t) \> = \f{1}{2} \pt \<e_i(t), e_i(t)\> = 0
$$
using that $\tilde{\psi}_0(x)=x$ so $D \tilde{\psi}_0 = \textrm{id}$.
From the fourth to fifth line, we used normality of the flow $\<\left.\ps\right|_{s=0} \tilde{\psi}_s(x), e_i(t)(x)\> = 0$,
and on the last line applied $\ps \tilde{\psi}_s(\psi_t(x)) = \ps \psi_{t+s}(x)$ from definition of the extension $\tilde{\psi}$, before pulling the integral from $\psi_t(A)$ back to $A$.

Hence the evolved volume measure pulled back to $U$ satisfies the dynamics
\begin{equation}
\label{density-dynamics}
\begin{aligned}
\left.\ps\right|_{s=0} \f{d(\psi_{t+s}^{-1})_* \vol_{\psi_{t+s}(M)}}{d (\psi_t^{-1})_*\vol_{\psi_t(M)}}(x) &= - \sum_{i=1}^m \<\pt \psi_t(x), \nabla^N_{e_i(t)} e_i(t)(\psi_t(x))\> \\
&= -\< H(\psi_t(x)), \newparallel^N_t \mathbf{n}(x)\>,
\end{aligned}
\end{equation}
which together with the initial condition $d(\psi_0)^{-1}_*\vol_{\psi_0(M)} = d\vol_M$ implies
$$
\det D \psi_t(x) = \frac{d(\psi_{t}^{-1})_* \vol_{\psi_{t}(M)}}{d\textrm{vol}_M}(x) = \exp\left(-\int_0^t \<H(\psi_s(x)), \newparallel^N_s \mathbf{n}(x)\> ds\right)
$$
by \cref{density-dynamics-lemma}, setting $h_t$ to be the right-hand side of \eqref{density-dynamics}. Equation \cref{pushforward-volume} is then simply the change of variable formula for the map $\psi_t$.

\end{proof}

\begin{remark}
The formula of \cref{volume-growth} can be extended from the neighbourhood $U$ to all of $M$ by a partition of unity argument, nonetheless the local formulation is sufficient for our purpose.
\end{remark}

We proceed to derive a disintegration of the ambient volume measure adapted to a submanifold of arbitrary codimension, at the cost of specialising to the case $N = \mathbb{R}^n$. Note that the covariant derivative $\nabla^{\R^n}$ then becomes the plain derivative denoted by $D$.
\begin{notation}
\label{psi-notation}
Let $(\mathbf{n}_j)_{j=1}^k$ be a local orthonormal frame for $TM^\perp$ on $U$ and denote $B_{\sigma_0}^k \subset \mathbb{R}^k$ the centered Euclidean ball of radius $\sigma_0$. Define the map
\begin{equation}
\label{psi-map}
\begin{aligned}
\psi: U &\times B_{\sigma_0}^k  \rightarrow \pi^{-1}(U) \subset \mathbb{R}^n\\
\psi(x, \beta) &= x+ \sum_{j=1}^k \beta_j \mathbf{n}_j(x).
\end{aligned}
\end{equation}
which gives the $k$-dimensional foliation $\{\psi(U, \beta) : \beta \in B_{\sigma_0}^k\}$ of $\pi^{-1}(U)$ with leaves of dimension $m$.
Extend $(\mathbf{n}_j)_{j=1}^k$ and $(e_i)_{i=1}^m$ smoothly to $\pi^{-1}(U)$ so that the restrictions to the submanifold $\psi(U,\beta)$ are an orthonormal frame in the tangent space and the normal space, respectively, for every $\beta \in \tilde{B}_{\sigma_0}^k$.

Denote $\mathbf{n}(x,\beta) = \sum_{j=1}^k \f{\beta_j \mathbf{n}_j(x)}{\|\beta\|_2}$ which was shown in \cref{normal-to-leaf} to be normal to each leaf $\psi(U,\beta)$. Then the mean curvature of $\psi(U,\beta)$ in the direction $\mathbf{n}(x,\beta)$ is
$$
\<H(\psi(x, \beta)), \mathbf{n}(x,\beta)\> = \<\mathbf{n}(x,\beta), \sum_{i=1}^m \nabla^{\R^{m+k}}_{e_i} e_i (\psi(x, \beta)) \>.
$$
Denote also the components of mean curvature in each of the directions of the normal frame,
$$
H^j(\psi(x,\beta)) := \<\mathbf{n}_j(x), H(\psi(x,\beta)) \> =  \<\mathbf{n}_j(x), \sum_{i=1}^m \nabla^{\R^{m+k}}_{e_i} e_i(\psi(x,\beta)) \>
$$
so that
$$
\|\beta\| \< H(\psi(x, \beta)), \mathbf{n}(x,\beta)\> = \sum_{j=1}^k \beta_j H^j(\psi(x,\beta)).
$$
\end{notation}

\begin{remark}
The collection of submanifolds $\{\psi(U,\beta): \beta \in \tilde{B}_{\sigma_0}^k\}$ is indeed a foliation of $\pi^{-1}(U) \subset \R^{m+k}$ (see e.g. the definition of foliation in \cite{lee-smooth-manifolds}). The leaves are disjoint submanifolds of dimension $m$. Defining
$$
F: (p_1,\ldots,p_m, \beta_1,\ldots, \beta_k) \mapsto \xi(p_1,\ldots,p_m) + \sum_{j=1}^k \beta_j \mathbf{n}_j(\xi(p_1,\ldots,p_m))
$$
where $\xi:O \subset \R^m \rightarrow U$ is an arbitrary chart on $U$, we have by definition that
$$
\psi(U,\beta) = F(\{\beta\}),
$$
so each leaf is a level set of $F$ and thus $F$ is a flat chart for the foliation.
\end{remark}

\begin{proposition}[Disintegration]
\label{ambient-volume-disintegration}
The ambient volume measure on $\pi^{-1}(U) \subset \mathbb{R}^n$ disintegrates with respect to the submanifold and the normal frame $(\mathbf{n}_j)_{j=1}^k$ as
\begin{equation}
\label{skew-decomposition-general}
\vol_{\R^n}(A) = \int_U \vol_M(dx) \int_{\tilde{B}_{\sigma_0}^k} d\beta_1 \ldots d\beta_k \mathbbm{1}_{A}(\psi(x, \beta)) e^{-\int_0^{1} \sum_{j=1}^k \beta_j H^j(\psi(x,s\beta)) ds}
\end{equation}
for any Borel measurable set $A \in \mathcal{B}(\pi^{-1}(U))$ in the tubular neighbourhood.
\end{proposition}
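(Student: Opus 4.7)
The plan is to apply the Euclidean change-of-variables formula to the diffeomorphism $\psi\colon U\times B_{\sigma_0}^k\to\pi^{-1}(U)$, computing its Jacobian explicitly and then matching it against the exponential factor in \eqref{skew-decomposition-general} by means of \cref{volume-growth}.

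The first step is to compute $|\det D\psi(x,\beta)|$ in coordinates. Taking $\{e_1,\ldots,e_m,\partial_{\beta_1},\ldots,\partial_{\beta_k}\}$ as the orthonormal basis of $T_xM\oplus\R^k$ at the source and $\{e_1(x),\ldots,e_m(x),\mathbf{n}_1(x),\ldots,\mathbf{n}_k(x)\}$ as that of $\R^n$ at the target, one finds $D\psi(\partial_{\beta_\ell})=\mathbf{n}_\ell(x)$ and $D\psi(e_i)=e_i+\sum_j\beta_j D_{e_i}\mathbf{n}_j(x)$. In these bases the Jacobian matrix is block-triangular, with the $k\times k$ identity as its lower-right block and zero as its upper-right block. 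Applying the Weingarten relation $\<e_p,D_{e_i}\mathbf{n}_j\>=-\<\II_x(e_i,e_p),\mathbf{n}_j\>$ identifies the upper-left block as $I_m-\sum_j\beta_j S_j(x)$, where $S_j(x)$ denotes the shape operator at $x$ associated with $\mathbf{n}_j(x)$. Hence
\[
|\det D\psi(x,\beta)|=\det\Bigl(I_m-\sum_{j=1}^k\beta_j S_j(x)\Bigr).
\]

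The second step is to recast this determinant as the exponential factor in \eqref{skew-decomposition-general}. Setting $\hat\beta:=\beta/\|\beta\|$ and the unit normal field $\hat{\mathbf{n}}(x,\hat\beta):=\sum_j\hat\beta_j\mathbf{n}_j(x)$, and using triviality of parallel transport in $\R^n$, \cref{volume-growth} applied to the normal flow $\Psi_r^{\hat\beta}(x):=x+r\hat{\mathbf{n}}(x,\hat\beta)$ at time $r=\|\beta\|$ yields $\exp\bigl(-\int_0^{\|\beta\|}\<H(\Psi_s^{\hat\beta}(x)),\hat{\mathbf{n}}(x,\hat\beta)\>\,ds\bigr)$. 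Substituting $s=u\|\beta\|$ and using $\|\beta\|\hat{\mathbf{n}}(x,\hat\beta)=\sum_j\beta_j\mathbf{n}_j(x)$ converts the exponent into $-\int_0^1\sum_j\beta_j H^j(\psi(x,u\beta))\,du$, matching the formula in the statement. The equality of the determinant and the exponential form then follows by viewing both as functions of a scaling parameter $t\in[0,1]$ (with $\beta$ replaced by $t\beta$) that satisfy the same first-order ODE with common initial value $1$ at $t=0$; their logarithmic derivatives coincide by the classical curvature evolution identity $\hat\lambda_i(\Psi_s^{\hat\beta}(x))=\hat\lambda_i(x)/(1-s\hat\lambda_i(x))$ for eigenvalues of the shape operator under a Euclidean normal flow, which reduces the identity to $\trace\bigl((I-t\bar S(x))^{-1}\bar S(x)\bigr)=\sum_j\beta_j H^j(\psi(x,t\beta))$ with $\bar S(x):=\sum_j\beta_j S_j(x)$.

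With both expressions for the Jacobian in hand, the disintegration \eqref{skew-decomposition-general} is an immediate consequence of the change of variables formula $\vol_{\R^n}(A)=\int\mathbbm{1}_A(\psi(x,\beta))|\det D\psi(x,\beta)|\,d\vol_M(x)\,d\beta_1\cdots d\beta_k$. I expect the main obstacle to lie in the final Jacobian identification, whose justification reduces to the curvature evolution identity; once that is in hand, the conclusion is a straightforward substitution.
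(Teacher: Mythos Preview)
Your first step—computing $|\det D\psi(x,\beta)| = \det\bigl(I_m - \sum_j\beta_jS_j(x)\bigr)$ via block-triangularity in the orthonormal target basis $\{e_p(x),\mathbf{n}_r(x)\}$ at $x$—is correct and clean. This is a genuinely different bookkeeping from the paper's: there the tangent part of the target frame is taken \emph{on the leaf}, namely $\{e_p(\psi(x,\beta)),\mathbf{n}_r(x)\}$, so that the upper-left block is already the matrix of $D\psi_t$ in orthonormal frames and \cref{volume-growth} delivers the exponential form at once, with no separate identification step. Your route through shape operators buys an explicit closed formula for the Jacobian, at the price of having to reconcile it with the exponential expression afterwards.

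The gap is in that reconciliation. The eigenvalue evolution $\hat\lambda_i(\Psi_s(x)) = \hat\lambda_i(x)/(1-s\hat\lambda_i(x))$ you invoke is a codimension-one fact about parallel hypersurfaces and does \emph{not} survive to codimension $k\ge 2$ when the normal connection is nontrivial: the derivative $D_{e_i}\hat{\mathbf{n}}$ then has a component in $T_xM^\perp\cap\hat{\mathbf{n}}^\perp$, which feeds into the Gram matrix of $T_{\Psi_s(x)}\Psi_s(M)$ and spoils the simple evolution law. Concretely, for a helix in $\R^3$ equipped with the Frenet normal frame and $\hat{\mathbf{n}}$ the principal normal, the torsion enters $\trace S_{\hat{\mathbf{n}}}(\Psi_s(x))$ and your trace identity $\trace\bigl((I-t\bar S)^{-1}\bar S\bigr) = \sum_j\beta_j H^j(\psi(x,t\beta))$ fails outright. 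So the ODE comparison cannot close without an additional hypothesis on the normal frame (parallelism in the normal connection, equivalently $\langle D_{e_i}\mathbf{n}_j,\mathbf{n}_\ell\rangle=0$); the paper's leaf-adapted target basis is what sidesteps this detour.
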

\begin{proof}
We apply the change of coordinates by the map defined by \eqref{psi-map}, which at every $(x,\beta)$ has block-triangular derivative with respect to the orthonormal bases 
$$
(e_1(x), \ldots, e_m(x), \partial_{\beta_1}(x), \ldots, \partial_{\beta_k}(x))
$$
and
$$
(e_1(\psi(x,\beta)), \ldots, e_m(\psi(x,\beta)), \mathbf{n}_1(x), \ldots, \mathbf{n}_k(x))
$$
in the domain and codomain respectively, since
$$
\< \partial_{\beta_i} \psi(x,\beta), e_j(x,\beta) \> = \< \mathbf{n}_i(x), e_j(\psi(x,\beta)) \> = 0.
$$
Hence the determinant can be computed as
$$
\begin{aligned}
\det D\psi &= \det \left(\< D_{e_i} \psi, e_j \>\right) \det ( \langle \partial_{\beta_i} \psi, \mathbf{n}_j \rangle ) \\
&= \det \left(\< D_{e_i} \psi, e_j \>\right) \det(\delta^i_j) \\
&= \det \left(\< D_{e_i} \psi, e_j \>\right)
\end{aligned}
$$
for which we have the right-hand side of \eqref{flow-determinant}.\\
Denoting $(e^i)_{i=1}^m$, $(\mathbf{n}^i)_{i=1}^k$ the coframes characterized by $e^i(e_j) = \delta^i_j$ and $\mathbf{n}^i(\mathbf{n}_j) = \delta^i_j$,
    $$
    \begin{aligned}
        & \vol_{\R^n}(A) = \int_{\pi^{-1}(U)} \mathbbm{1}_A(z) e^1 \wedge \ldots \wedge e^m \wedge \mathbf{n}^1 \wedge \ldots \wedge\mathbf{n}^k (z) \\
        &= \int_U e^1 \wedge \ldots \wedge e^m(x)  \int_{B_{\sigma_0}^k} d\beta_1 \ldots d\beta_k \mathbbm{1}_A(\psi(x,\beta)) |\det D\psi(x, \beta)|  \\
        &= \int_U \vol_M(dx) \int_{B_{\sigma_0}^k} d\beta_1 \ldots d\beta_k  \mathbbm{1}_A(\psi(x,\beta)) \exp\left(-\int_0^{\|\beta\|_2} \< H(\psi(x,\f{s\beta}{\|\beta\|}), \mathbf{n}(x,\f{s\beta}{\|\beta\|})\> ds \right)  
    \end{aligned}
    $$
    on the second line using the change of variable formula and on the third line plugging in the determinant expression \eqref{flow-determinant} with $\mathbf{n}(x, \beta) = \sum_{j=1}^k \f{\beta_j \mathbf{n}_j(x)}{\|\beta\|}$ and $t = \|\beta\|_2$. The final expression is obtained by the substitution $s' = \f{s}{\|\beta\|}$ so that
    $$
    \begin{aligned}
    \int_0^{\|\beta\|_2} \< H(\psi(x,\f{s\beta}{\|\beta\|}), \mathbf{n}(x,\f{s\beta}{\|\beta\|})\> ds &= \int_0^1  \|\beta\| \< H(\psi(x,s'\beta)), \mathbf{n}(x, s'\beta)\> ds'\\
    &= \int_0^1 \sum_{j=1}^k \beta_j H^j(\psi(x,s'\beta))ds'. 
    \end{aligned}
    $$
\end{proof}

\begin{corollary}[Codimension 1]
\label{volume-disintegration}
If $M$ has codimension 1 then the ambient volume measure on $\pi^{-1}(U)$ can be written in terms of the disintegration
\begin{equation}
\label{skew-decomposition-hypersurface}
\vol_{\R^n}(A) = \int_{U} \int_{-\sigma}^{\sigma} \mathbbm{1}_A(\psi(x,\beta)) e^{-\int_0^\beta \< H(\psi(x, \beta'), \mathbf{n}(x,\beta')\> d\beta'} d\beta\; \vol_M(dx), \; \forall A \in \mathcal{B}(\pi^{-1}(U)),
\end{equation}
where $\vol_M(dx)$ is the volume measure of the submanifold $M$ and $H(\psi(\cdot, \beta))$ is the mean curvature on the Riemannian submanifold $\psi(U,\beta)$.
\end{corollary}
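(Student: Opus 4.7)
The plan is to derive this codimension-one formula as a direct specialization of \cref{ambient-volume-disintegration} to the case $k=1$, followed by a one-variable substitution in the inner integrand. Since $k=1$, there is only a single unit normal vector field $\mathbf{n}_1$ on $U$, the ball $\tilde B^k_{\sigma_0}$ becomes the interval $(-\sigma_0,\sigma_0)$ (and we then restrict to $(-\sigma,\sigma)$ by monotonicity of the measure in the radius of the tube), and the map $\psi$ from \eqref{psi-map} simplifies to $\psi(x,\beta)=x+\beta\,\mathbf{n}_1(x)$ with scalar $\beta$.

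First I would invoke \cref{ambient-volume-disintegration} directly, giving
$$
\vol_{\R^n}(A) \;=\; \int_U \vol_M(dx)\int_{-\sigma_0}^{\sigma_0} \mathbbm{1}_A(\psi(x,\beta))\, e^{-\int_0^1 \beta H^1(\psi(x,s\beta))\,ds}\,d\beta,
$$
since the sum $\sum_{j=1}^k \beta_j H^j$ collapses to the single term $\beta H^1(\psi(x,s\beta))$ with $H^1(\psi(x,\beta'))=\langle \mathbf{n}_1(x),H(\psi(x,\beta'))\rangle$. Restricting the $\beta$-integral from $(-\sigma_0,\sigma_0)$ to $(-\sigma,\sigma)$ covers precisely $\{z\in\pi^{-1}(U):\|z-\pi(z)\|<\sigma\}$, matching the stated tubular region.

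Next I would perform the substitution $\beta'=s\beta$ (so $d\beta'=\beta\,ds$) in the inner integral. For $\beta>0$ this yields $\int_0^1 \beta H^1(\psi(x,s\beta))\,ds = \int_0^{\beta} H^1(\psi(x,\beta'))\,d\beta'$; for $\beta<0$ the limits flip sign and the factor $\beta$ in $d\beta'=\beta\,ds$ absorbs the sign, giving the same oriented integral $\int_0^\beta H^1(\psi(x,\beta'))\,d\beta'$ under the convention $\int_0^\beta=-\int_\beta^0$ when $\beta<0$. Finally, along a radial segment the vector $\mathbf{n}(x,\beta')$ coincides (up to the sign absorbed by the orientation of the integral from $0$ to $\beta$) with $\mathbf{n}_1(x)$, so $H^1(\psi(x,\beta'))=\langle H(\psi(x,\beta')),\mathbf{n}(x,\beta')\rangle$ under this orientation, yielding exactly the exponent in \eqref{skew-decomposition-hypersurface}.

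The only point requiring any care is the sign bookkeeping in the substitution for $\beta<0$, because the convention in \cref{ambient-volume-disintegration} writes the normal direction as $\mathbf{n}(x,\beta)=\beta\,\mathbf{n}_1(x)/\|\beta\|$ while the corollary writes the integrand over an oriented interval $[0,\beta]$; once one checks that the orientation of integration cancels the orientation of $\mathbf{n}(x,\beta')$, the identity follows. No new geometric input is needed beyond \cref{ambient-volume-disintegration}.
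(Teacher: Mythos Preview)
Your proposal is correct and matches the paper's approach: the corollary is stated in the paper without a separate proof, as an immediate specialization of \cref{ambient-volume-disintegration} to $k=1$, which is exactly what you do via the substitution $\beta'=s\beta$. The sign bookkeeping you flag is the only point to check, and you handle it correctly.
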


\subsection{Approximate transport maps}
\label{approximate-transport-maps}

In the sequel, we work with transport maps which are only optimal up to sufficiently high degree for asymptotically small diameter of support of the test measures. We present a result which justifies the use of such transport maps.

Let $\mathcal{X}$ be a Polish space, $\mathcal{P}(\mathcal{X})$ the set of probability measures, define
$$
\mathcal{P}_1(\mathcal{X}) := \{\mu \in \mathcal{P}(\mathcal{X}) : \exists \; x_0 \in \mathcal{X} \textrm{ such that }\int_\mathcal{X} d(x_0,x) \mu(dx) <\infty\}
$$
and consider two families of probability measures $(\mu_1^{\delta})_{\delta \geq 0}$, $(\mu_2^{\delta})_{\delta \geq 0} \subset \mathcal{P}_1(\mathcal{X})$. 

\begin{lemma}[$W_1$ distance approximation]
\label{w1-distance-approximation}
If $\textrm{diam supp} \; \mu_2^\delta = O(\delta^\ell)$
and $\mu_1^{\delta} \ll \mu_2^\delta $ for every $\delta \geq 0$ with the density satisfying
$\sup_{x \in \textrm{supp }\mu_2} \frac{d\mu_1^\delta}{d\mu_2^\delta}(x) = 1+O(\delta^k)$, then
$$
\sup_{\mu \in \mathcal{P}_1(\mathcal{X})} |W_1(\mu, \mu_1^\delta) - W_1(\mu, \mu_2^\delta)| = O(\delta^{k+\ell}).
$$
\end{lemma}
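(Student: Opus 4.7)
The plan is to reduce the claim to a bound on $W_1(\mu_1^\delta, \mu_2^\delta)$ via the triangle inequality. Indeed, since all measures in question lie in $\mathcal{P}_1(\mathcal{X})$, we have
$$
\sup_{\mu \in \mathcal{P}_1(\mathcal{X})} |W_1(\mu, \mu_1^\delta) - W_1(\mu, \mu_2^\delta)| \leq W_1(\mu_1^\delta, \mu_2^\delta),
$$
so the task becomes showing that $W_1(\mu_1^\delta, \mu_2^\delta) = O(\delta^{k+\ell})$ using only the small-diameter hypothesis and the density estimate.

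For this, I would invoke Kantorovich--Rubinstein duality and combine it with the diameter hypothesis to exchange a Wasserstein bound for a total variation bound. Since $\mu_1^\delta \ll \mu_2^\delta$ we have $\operatorname{supp}\mu_1^\delta \subset \operatorname{supp}\mu_2^\delta$, and both supports fit inside a set of diameter $O(\delta^\ell)$. Fixing an arbitrary reference point $x_0 \in \operatorname{supp}\mu_2^\delta$, for any $1$-Lipschitz test function $f$ the shifted function $f - f(x_0)$ is bounded by $O(\delta^\ell)$ on this support and produces the same integral, as $\mu_1^\delta - \mu_2^\delta$ has zero mass. Thus
$$
W_1(\mu_1^\delta, \mu_2^\delta) = \sup_{\|f\|_{\mathrm{Lip}} \leq 1} \int (f - f(x_0))\, d(\mu_1^\delta - \mu_2^\delta) \leq O(\delta^\ell) \cdot \|\mu_1^\delta - \mu_2^\delta\|_{\mathrm{TV}}.
$$

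It remains to estimate the total variation. Writing $g := d\mu_1^\delta/d\mu_2^\delta$, the hypothesis gives the one-sided bound $g \leq 1 + C \delta^k$ on $\operatorname{supp}\mu_2^\delta$. The subtle point, which I expect to be the main obstacle, is that a one-sided bound on $g$ does not immediately yield a bound on $|g-1|$, yet we need to control the symmetric quantity $\|\mu_1^\delta-\mu_2^\delta\|_{\mathrm{TV}} = \int |g-1|\, d\mu_2^\delta$. This is resolved by the mass constraint: since $\int g\, d\mu_2^\delta = 1 = \int 1\, d\mu_2^\delta$, we have $\int (g-1)_+ \, d\mu_2^\delta = \int (1-g)_+ \, d\mu_2^\delta$, and the positive-part integral is immediately controlled by the uniform one-sided bound, giving $\int (g-1)_+\, d\mu_2^\delta \leq C\delta^k$. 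Consequently $\|\mu_1^\delta - \mu_2^\delta\|_{\mathrm{TV}} = 2\int (g-1)_+ \, d\mu_2^\delta = O(\delta^k)$, and plugging this back gives $W_1(\mu_1^\delta, \mu_2^\delta) = O(\delta^{k+\ell})$, uniformly in $\mu$, which completes the argument.
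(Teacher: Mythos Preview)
Your proof is correct and follows essentially the same route as the paper: reduce to $W_1(\mu_1^\delta,\mu_2^\delta)$ by the triangle inequality, apply Kantorovich--Rubinstein duality, and subtract a constant $f(x_0)$ to exploit the diameter bound. Your mass-constraint argument converting the one-sided density hypothesis into a two-sided total variation bound is in fact more careful than the paper, which simply writes $\frac{d\mu_1^\delta}{d\mu_2^\delta}-1 = O(\delta^k)$ without addressing this point.
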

\begin{proof}
By reverse triangle inequality and Kantorovich-Rubinstein duality, $\forall \mu \in \mathcal{P}_1(\mathcal{X})$:
    $$
    \begin{aligned}
    |W_1(\mu, \mu_1^\delta) - & W_1(\mu, \mu_2^\delta)| \leq W_1(\mu_1^\delta, \mu_2^\delta) \\
    &= \sup_{f \in \textrm{Lip}_1(\mathcal{X})} \int_\mathcal{X} f(x) (d\mu_1^\delta(x) - d\mu_2^\delta(x)) \\
    &= \sup_{f \in \textrm{Lip}_1(\mathcal{X})} \int_\mathcal{X} f(x) \left( \f{d\mu_1^\delta}{d\mu_2^\delta}(x)d\mu_2^\delta(x) -d\mu_2^\delta(x) \right) \\
    &= \sup_{f \in \textrm{Lip}_1(\mathcal{X})} \int_\mathcal{X} f(x) \left(\f{d\mu_1^\delta}{d\mu_2^\delta}(x)-1\right) d\mu_2^\delta(x) \\
    &=\sup_{f \in \textrm{Lip}_1(\mathcal{X})} \int_\mathcal{X} (f(x)-f(x_0)) O(\delta^k) d\mu_2^\delta(x) = O(\delta^{k + \ell}),
    \end{aligned}
    $$
    where $x_0 \in \textrm{supp } \mu_2^\delta$ is arbitrary.
    On the last line, we introduced the term
    $$
    \int_\mathcal{X} f(x_0) \left(\f{d\mu_1^\delta}{d\mu_2^\delta}(x)-1\right) d\mu_2(x) = 0,
    $$
    because $f(x_0)$ is constant and the density integrates to 1, and then used the 1-Lipschitz property of $f$ together with the $O(\delta^\ell)$ bound on the diameter of the support of $\mu_2$.
\end{proof}

Let $(\mu^\delta)_{\delta \geq 0} \subset \mathcal{P}_1(\mathcal{X})$ be another family of probability measures.

\begin{definition}[Approximate transport map]
\label{approximate-transport}
    A measurable map $T^\delta : \mathcal{X} \rightarrow \mathcal{X}$ is said to be an approximate transport from $\mu^\delta$ to $\mu_2^\delta$ with degree $k$ if $T^\delta_* \mu^\delta \ll \mu_2$ and the density satisfies
    $$
    \sup_{x \in \textrm{supp } \mu_2} \frac{d(T_*^\delta \mu^\delta)}{d\mu_2^\delta}(x) = 1+ O(\delta^k).
    $$
\end{definition}

\begin{corollary}
\label{asymptotically-optimal-transport}
    If $T^\delta : \mathcal{X} \rightarrow \mathcal{X}$ is an approximate transport map from $\mu^\delta$ to $\mu_2^\delta$ with degree $k$ and $\textrm{diam supp } \mu_2^\delta = O(\delta^\ell)$ then
    $$
    W_1(\mu^\delta, \mu_2^\delta) = W_1(\mu^\delta, T_*^\delta \mu^\delta) + O(\delta^{k+\ell}).
    $$
\end{corollary}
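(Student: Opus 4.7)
The plan is to observe that this corollary is an essentially immediate application of \cref{w1-distance-approximation}, with the two families of measures being $\mu_2^\delta$ on one side and $T^\delta_* \mu^\delta$ on the other, and with the third measure in the lemma playing the role of $\mu^\delta$. First I would set $\mu_1^\delta := T^\delta_* \mu^\delta$ in the notation of \cref{w1-distance-approximation}, and check that the two hypotheses of the lemma are met. The absolute continuity $\mu_1^\delta \ll \mu_2^\delta$ and the density bound $\sup_{x \in \mathrm{supp}\,\mu_2^\delta} \frac{d\mu_1^\delta}{d\mu_2^\delta}(x) = 1 + O(\delta^k)$ are exactly the content of \cref{approximate-transport}, i.e.\ the definition of an approximate transport map of degree $k$. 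The diameter bound $\mathrm{diam\,supp}\,\mu_2^\delta = O(\delta^\ell)$ is a standing hypothesis of the present corollary.

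Having verified the hypotheses, I would apply \cref{w1-distance-approximation} to the particular measure $\mu := \mu^\delta \in \mathcal{P}_1(\mathcal{X})$ (which lies in $\mathcal{P}_1(\mathcal{X})$ because a measurable pushforward preserves integrability of the distance and the two measures differ only by the density). The lemma then delivers
$$
|W_1(\mu^\delta, T^\delta_* \mu^\delta) - W_1(\mu^\delta, \mu_2^\delta)| = O(\delta^{k+\ell}),
$$
which, after a trivial rearrangement, is the claimed identity $W_1(\mu^\delta, \mu_2^\delta) = W_1(\mu^\delta, T^\delta_* \mu^\delta) + O(\delta^{k+\ell})$.

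There is really no obstacle here: all the work has already been done inside the Kantorovich--Rubinstein computation in the proof of \cref{w1-distance-approximation}, where the Lipschitz test function is used to absorb the diameter factor $\delta^\ell$ while the density defect contributes the factor $\delta^k$. The only minor point worth mentioning in passing is that one could equivalently have phrased this as applying the reverse triangle inequality $|W_1(\mu^\delta,\mu_2^\delta) - W_1(\mu^\delta, T^\delta_* \mu^\delta)| \le W_1(T^\delta_* \mu^\delta, \mu_2^\delta)$ and then bounded $W_1(T^\delta_* \mu^\delta, \mu_2^\delta) = O(\delta^{k+\ell})$ directly via Kantorovich--Rubinstein duality, but invoking the lemma already packages this. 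Hence the corollary is just a bookkeeping reformulation of \cref{w1-distance-approximation} tailored to the transport-map setting that will be used in the sequel.
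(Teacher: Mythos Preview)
Your proposal is correct and matches the paper's own proof exactly: the paper simply sets $\mu_1^\delta := T_*^\delta \mu^\delta$ and applies \cref{w1-distance-approximation}. Your additional verification of hypotheses and the aside about the reverse triangle inequality are fine elaborations, but the argument is identical.
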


\begin{proof}
    Set $\mu_1^\delta := T_*^\delta \mu^\delta$ and apply the previous lemma.
\end{proof}

\subsection{Test measures in Fermi coordinates}
\label{section-fermi-coordinates}
Let $(M,g)$ be a Riemannian submanifold of codimension $k$ in $\R^{m+k}$ and $U \subset M$ an open neighbourhood of a point $x_0 \in M$ as in \cref{notation-submanifold}. 
The Fermi coordinates are a suitable tool for explicit computations and will be used throughout the rest of this work. The following is a modification of classical Fermi coordinates to the submanifold setting.

\begin{definition}[Fermi coordinates]
\label{definition-fermi-coordinates}
Let $\gamma: (-\delta_0,\delta_0) \rightarrow M$ be a unit speed geodesic with $\delta_0 >0$ small enough for $\gamma$ to be contained in $U$, $\varepsilon_0$ the uniform injectivity radius in $M$ along $\gamma$ and $\sigma_0$ smaller than half the reach of $U$ in $\R^{m+k}$. Let $(e_i)_{i=1}^m$ be an orthonormal frame for the fibres of $TM$ along $\gamma$ such that $e_1(\alpha_1) = \dot{\gamma}(\alpha_1)$ and $\nabla^M_{\dot{\gamma}} e_i (\alpha_1) = 0$ for $i=1,\ldots,m$ and every $\alpha_1 \in (-\delta_0,\delta_0)$. Also let $(\mathbf{n}_i)_{i=1}^k$ be a local orthonormal frame for fibres of the normal bundle $TM^\perp$ along $\gamma$. 

Denote by $\tilde{B}_\varepsilon^{m-1}$ the centered ball of radius $\varepsilon>0$ in $\mathbb{R}^{m-1}$ and by $\tilde{B}_\sigma^k$ the centered ball of radius $\sigma > 0$ in $\mathbb{R}^k$.
Denote $\alpha=(\alpha_1,\ldots,\alpha_m), \beta=(\beta_1,\ldots,\beta_k)$ and define 
$$
\begin{aligned}
&\phi: (-\delta_0,\delta_0) \times \tilde{B}_{\varepsilon_0}^{m-1} \times \tilde{B}_{\sigma_0}^k \rightarrow \R^{m+k}, \\
\phi(\alpha,\beta) &:=\exp_{M, \gamma(\alpha_1)} \left(\sum_{i=2}^m \alpha_i e_i(\alpha_1)\right) + \sum_{j=1}^k \beta_j \mathbf{n}_j(\alpha),
\end{aligned}
$$
which is a diffeomorphism provided that $\delta_0,\varepsilon_0,\sigma_0 > 0$ are sufficiently small. This is referred to as the Fermi chart along $\gamma$ adapted to the submanifold $M$. See \cref{fermi-coordinates-surface-fig} for an illustration on a 2-surface in $\R^3$.

The Riemannian metric is expressed in the Fermi coordinates as
\begin{equation}
\label{metric-in-fermi}
g_{ij}(\alpha) = \langle \partial_{\alpha_i} \phi(\alpha,0), \partial_{\alpha_j} \phi(\alpha,0)
\rangle.
\end{equation}
\end{definition}

\begin{remark}
The advantage of $\phi$ over a generic $\psi$ as given in \cref{psi-notation} is that $\phi$ is adapted to the geodesic $\gamma$ in a way that simplifies computations of distances relevant to our optimal transport problem. The chart $\phi$  yields again a foliation $\{\phi(U, \beta) : \beta \in \tilde{B}_{\sigma_0}^k\}$ of $M_{\sigma_0}$.
\end{remark}

\begin{definition}[Test measures]
\label{def-test-measures}
Denote the cylinder-like segment in $\R^n$ of height $\sigma$ and radius $\varepsilon$ centered at $x \in M$ as
$$
B_{\sigma,\varepsilon}(x) := \{z \in M_\sigma : d_M(\pi(z), x) < \varepsilon\}
$$
and let $\mu$ be the Lebesgue measure on $\mathbb{R}^n$. For any $x \in M$ define the family of test probability measures
$$
\forall A \in \mathcal{B}(\mathbb{R}^n): \mu_x^{\sigma, \varepsilon}(A) = \frac{\mu(A \cap B_{\sigma,\varepsilon}(x))}{\mu(B_{\sigma,\varepsilon}(x))}
$$
indexed by $\varepsilon, \sigma >0$.
\end{definition}

Denote $\hat{\alpha} = (\alpha_2,\ldots,\alpha_m)$ so that $\alpha = (\alpha_1, \hat{\alpha})$. The main purpose of the expansion in the following lemma is twofold. First, we use it to design the third order corrections in the approximate transport map of \cref{proposed-transport-map} so that density matching occurs in \cref{proposed-approximate}. Second, the first order term of the expansion interacts with first order term of pointwise distance when integrating to get the Wasserstein upper bound in the proofs of Sections 3 and 4.

\begin{lemma}[Test measures in Fermi coordinates]
\label{test-measures-in-fermi-coordinates}
For any $y = \gamma(\delta)$, the expansion of the density of test measures in Fermi coordinates is
\begin{equation}
\label{test-measures-fermi}
\begin{aligned}
&(\phi_*^{-1} \mu_y^{\sigma, \varepsilon})(d\alpha, d\beta) \\
&= \f{1}{Z} \mathbbm{1}_{\tilde{B}_{\sigma,\varepsilon}}(\delta+\alpha_1, \hat{\alpha},\beta) \bigg( 1 - \sum_{i=1}^k \beta_i H^i(\phi(\mathbf{0})) - \sum_{i=1}^k \sum_{j=1}^m \alpha_j \beta_i \partial_{\alpha_j}(H^i \circ \phi)(\mathbf{0}) \\
& \hspace{1.5cm} - \sum_{i,j=1}^k \beta_i \beta_j \partial_{\beta_j} (H^i \circ \phi)(\mathbf{0}) + \f{1}{2} \sum_{i,j=1}^k \beta_i \beta_j H^i(\phi(\mathbf{0})) H^j(\phi(\mathbf{0})) \\
& \hspace{1.5cm} + \f{1}{4} \sum_{q,\ell=2}^m \sum_{i=1}^m\alpha_q \alpha_\ell \partial_{\alpha_q}\partial_{\alpha_\ell} g_{ii}(\mathbf{0}) + O(\delta^3) \bigg) d\alpha d\beta
\end{aligned}
\end{equation}
where $Z$ is the probability normalization constant and $g=(g_{ij})$ is the Riemannian metric of $M$ in Fermi coordinates given by \eqref{metric-in-fermi}.
\end{lemma}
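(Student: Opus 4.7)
I would start by applying the change of variables $\phi$, writing
$$(\phi_*^{-1}\mu_y^{\sigma,\varepsilon})(d\alpha,d\beta)=\frac{1}{Z}\,\mathbbm{1}_{\phi^{-1}(B_{\sigma,\varepsilon}(y))}(\alpha,\beta)\,|\det D\phi(\alpha,\beta)|\,d\alpha\,d\beta.$$
Because $\pi\circ\phi(\alpha,\beta)=\exp_{M,\gamma(\alpha_1)}(\sum_{i\geq 2}\alpha_i e_i(\alpha_1))$ depends only on $\alpha$ and $\|\phi(\alpha,\beta)-\pi\phi(\alpha,\beta)\|=\|\beta\|$, the preimage $\phi^{-1}(B_{\sigma,\varepsilon}(y))$ with $y=\gamma(\delta)$ is simply the translate by $(-\delta,0,0)$ of $\tilde B_{\sigma,\varepsilon}:=\phi^{-1}(B_{\sigma,\varepsilon}(x_0))$, giving rise to the indicator $\mathbbm{1}_{\tilde B_{\sigma,\varepsilon}}(\delta+\alpha_1,\hat\alpha,\beta)$. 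Note also that the support constraint $\sigma\vee\varepsilon\leq\delta/4$ forces $|\alpha_1|,|\hat\alpha|,|\beta|=O(\delta)$ throughout.

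Next, I would factorize the Jacobian. Since $\phi$ is the composition of the intrinsic exponential on $M$ (in the $\alpha$ variables) with the normal flow $\psi$ of \cref{psi-notation} (in the $\beta$ variables), and since the two blocks are orthogonal in the ambient space, the same block-triangular argument used in \cref{ambient-volume-disintegration} yields
$$|\det D\phi(\alpha,\beta)|=\sqrt{\det g(\alpha)}\;\exp\!\Bigl(-\int_0^1\sum_{j=1}^k\beta_j\,H^j(\phi(\alpha,s\beta))\,ds\Bigr),$$
the first factor being the standard Riemannian volume element of $M$ in local coordinates, and the second supplied by \cref{volume-growth}.

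The body of the proof is then a Taylor expansion of each factor up to second order in $(\alpha,\beta)$. For $\sqrt{\det g}$ I would use that the frame $(e_i)$ is parallel along $\gamma$, whence $g_{ij}(\alpha_1,\mathbf 0)=\delta_{ij}$, and that $(\alpha_2,\ldots,\alpha_m)$ are Riemannian normal coordinates on $M$ at $\gamma(\alpha_1)$, so that the first $\hat\alpha$-derivatives of $g_{ij}$ vanish on the axis $\hat\alpha=0$. Consequently $\sqrt{\det g(\alpha)}=1+\tfrac12\sum_i(g_{ii}(\alpha)-1)+O(\delta^3)$, and the only surviving second-order term is precisely the one-quarter Hessian sum appearing in \eqref{test-measures-fermi}. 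For the exponential I would Taylor expand $H^j(\phi(\alpha,s\beta))$ to first order about the origin, carry out the trivial $s$-integral, and then apply $e^{-X}=1-X+\tfrac12X^2+O(X^3)$: these steps produce respectively the linear term $-\sum_i\beta_iH^i(\phi(\mathbf 0))$, the mixed $\alpha\beta$ term, the $\beta\beta$ normal-derivative term, and the quadratic $+\tfrac12\sum_{i,j}\beta_i\beta_jH^i(\phi(\mathbf 0))H^j(\phi(\mathbf 0))$ from the square of the leading exponent. Multiplying the two expansions, discarding $O(\delta^3)$ cross-products, and absorbing the constants into $1/Z$ yields the bracketed polynomial of the statement.

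The main obstacle is simply order-bookkeeping: one must verify that no lower-order cross-term is hidden between the two Jacobian factors. This is controlled by the two vanishing facts above (parallelism of $e_i$ along $\gamma$, and the normal-coordinate property in the transverse directions), without which additional $O(\delta)$ and $O(\delta^2)$ contributions would appear. A secondary subtlety is that the $\beta\beta$ normal-derivative term can be written with indices in either order, since $\sum_{i,j}\beta_i\beta_j\partial_{\beta_i}(H^j\circ\phi)(\mathbf 0)=\sum_{i,j}\beta_i\beta_j\partial_{\beta_j}(H^i\circ\phi)(\mathbf 0)$ by the symmetry of the $\beta_i\beta_j$ factor.
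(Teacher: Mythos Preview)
Your proposal is correct and follows essentially the same approach as the paper: pull back via $\phi$, factor the Jacobian into $\sqrt{\det g(\alpha)}$ times the normal exponential of \cref{volume-growth}/\cref{ambient-volume-disintegration}, Taylor-expand each factor to second order using the vanishing of first derivatives of $g$ along the Fermi axis (parallelism of the frame along $\gamma$ and the normal-coordinate property in the transverse $\hat\alpha$ directions), and multiply. The paper spells out the case analysis for $\partial_{\alpha_i}g_{j\ell}(\alpha_1,\mathbf 0)=0$ in slightly more detail and also records the explicit form of $Z$, but the argument is the same.
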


\begin{proof}
First, note the pull-back of the test measure $\mu_y^{\sigma, \varepsilon}(d\alpha, d\beta)$ to Fermi coordinates is
$$
\begin{aligned}
&(\phi_*^{-1} \mu_y^{\sigma, \varepsilon})(d\alpha, d\beta) \\
&= \frac{1}{Z} \mathbbm{1}_{\tilde{B}_{\sigma,\varepsilon}}(\alpha_1+\delta,\hat{\alpha},\beta) \exp\left( -\int_0^1 \|\beta\| \< H(\phi(\alpha, s\beta)), \mathbf{n}(\phi(\alpha, s\beta)) \> ds\right)  (\phi_*^{-1} \vol_M)(d\alpha)  d\beta .
\end{aligned}
$$
The Riemannian metric in Fermi coordinates expands as
\begin{equation}
\label{factor-1}
\begin{aligned}
g_{ij}(\alpha) &= g_{ij}(\mathbf{0}) + \sum_{\ell=1}^m \alpha_\ell \partial_{\alpha_\ell}g_{ij}(\mathbf{0}) + \f{1}{2} \sum_{q,\ell=1}^m \alpha_q \alpha_\ell \partial_{\alpha_q}\partial_{\alpha_\ell} g_{ij}(\mathbf{0}) + O(\varepsilon^3) \\
&= \delta_{ij} + \f{1}{2} \sum_{q,\ell=2}^m \alpha_q \alpha_\ell \partial_{\alpha_q}\partial_{\alpha_\ell} g_{ij}(\mathbf{0}) + O(\varepsilon^3).
\end{aligned}
\end{equation}
Indeed, $\partial_{\alpha_i} g_{j\ell}(\alpha_1,\mathbf{0}) = 0$ for all $\alpha_1 \in (-\delta_0,\delta_0)$, and hence also $\partial_{\alpha_1} \partial_{\alpha_i} g_{j\ell}(\alpha_1,\mathbf{0}) = 0$. We show this by cases:
\begin{itemize}
    \item $\forall i,j,\ell = 2,\ldots, m: \partial_i g_{j\ell}(\alpha_1, \mathbf{0}) = 0$ because for every fixed $\alpha_1 \in (-\delta_0,\delta_0)$, $\phi(\alpha_1, \cdot)$ are normal coordinates within the injectivity radius of $\exp_{\gamma(\alpha_1)} (\{\dot{\gamma}(\alpha_1)\}^\perp) \subset M$ at $\gamma(\alpha_1)$ (see e.g. \cite[Sec. 1.4]{MR3726907} for a proof).
    \item $\forall i,j=1,\ldots,m: \partial_1 g_{ij}(\alpha_1, \mathbf{0}) = 0$ for any $\alpha_1 \in (-\delta_0,\delta_0)$ as the orthonormal frame along $\gamma$ used to define the Fermi chart is parallel translated along $\gamma$.
    
    \item $\forall i=2,\ldots,m$ and $j=1,\ldots,m$ and any $\alpha_1 \in (-\delta_0,\delta_0)$,
    $$
    \begin{aligned}
    \partial_i g_{1j}(\alpha_1, \mathbf{0}) &= \<\partial_{\alpha_1} \partial_{\alpha_i} \phi(\alpha_1, \mathbf{0}), \partial_{\alpha_j} \phi(\alpha_1, \mathbf{0})\> + \<\partial_{\alpha_1} \phi(\alpha_1,\mathbf{0}), \nabla^M_{\partial_{\alpha_i}\phi} \partial_{\alpha_j} \phi(\alpha_1,\mathbf{0})\> \\
    &= \<\nabla^{\R^n}_{\partial_{\alpha_1} \phi} \partial_{\alpha_i} \phi(\alpha_1,\mathbf{0}) - \nabla^M_{\partial_{\alpha_1} \phi} \partial_{\alpha_i} \phi(\alpha_1,\mathbf{0}), \partial_{\alpha_j} \phi(\alpha_1,\mathbf{0})\> = 0
    \end{aligned}
    $$
    using that $\nabla^{\R^n}_{\partial_{\alpha_1} \phi} \partial_{\alpha_i} \phi(\alpha_1,\mathbf{0}) - \nabla^M_{\partial_{\alpha_1} \phi} \partial_{\alpha_i} \phi(\alpha_1,\mathbf{0}) \perp M$ and $\nabla^M_{\partial_{\alpha_i}\phi} \partial_{\alpha_j} \phi(\alpha_1,\mathbf{0}) = 0$. The latter vanishes for $j\neq 1$ again by normality of the chart on $\exp_{\gamma(\alpha_1)} (\{\dot{\gamma}(\alpha_1)\}^\perp)$, and for $j = 1$ because $\partial_{\alpha_i} \phi$ is given by parallel translation along $\gamma$.
\end{itemize}
The Riemannian volume expanded in the Fermi coordinates then simplifies to
$$
\begin{aligned}
\phi_*^{-1} \vol_M(d\alpha) &= \sqrt{\det g_{ij}(\alpha)} d\alpha \\
&=\det \left( \delta_{ij} + \f{1}{2} \sum_{q,\ell=2}^m \alpha_q \alpha_\ell \partial_{\alpha_q}\partial_{\alpha_\ell} g_{ij}(\mathbf{0}) + O(\varepsilon^3)\right)^{\frac{1}{2}} d\alpha \\
&=  \left( 1 + \f{1}{4} \sum_{q,\ell=2}^m \sum_{i=1}^m \alpha_q \alpha_\ell \partial_{\alpha_q}\partial_{\alpha_\ell} g_{ii}(\mathbf{0})+ O(\varepsilon^3) \right) d\alpha.
\end{aligned}
$$
Moreover, expand the exponent in the normal part of the disintegration as
$$
\begin{aligned}
-\int_0^1 \|\beta\| H_{\mathbf{n}}(\phi(\alpha,s\beta))ds &= -\int_0^1 \sum_{i=1}^k \beta_i H^i(\phi(\alpha,s\beta)) ds\\
&= - \sum_{i=1}^k \beta_i H^i(\phi(\mathbf{0})) - \sum_{i,j} \alpha_j \beta_i \partial_{\alpha_j}(H^i \circ \phi)(\mathbf{0}) \\
&\qquad - \sum_{i,j} \beta_i \beta_j \partial_{\beta_j} (H^i \circ \phi)(\mathbf{0}) + O(\delta^3)
\end{aligned}
$$
and apply the approximation up to second order $e^x = 1+x+\f{x^2}{2} + O(x^3)$ to obtain
\begin{equation}
\label{factor-2}
\begin{aligned}
&\exp\left(-\int_0^1 \|\beta\| H_{\mathbf{n}}(\phi(\alpha,s\beta))ds \right) \\
&= 1 - \sum_{i=1}^k \beta_i H^i(\phi(\mathbf{0})) - \sum_{i,j} \alpha_j \beta_i \partial_{\alpha_j}(H^i\circ \phi)(\mathbf{0}) \\
&\qquad - \sum_{i,j} \beta_i \beta_j \partial_{\beta_j} (H^i \circ \phi)(\mathbf{0}) +  \f{1}{2} \sum_{i,j} \beta_i \beta_j H^i(\phi(\mathbf{0})) H^j(\phi(\mathbf{0})) + O(\delta^3).
\end{aligned}
\end{equation}
We conclude the result by taking the product of the two factors \eqref{factor-1} and \eqref{factor-2}, merging third order terms in $\alpha,\beta$ into $O(\delta^3)$ by the assumption $\varepsilon \vee \sigma \leq \f{\delta}{4}$.

The probability normalization constant can be deduced by integration of \eqref{factor-2} with respect to $\mu_y^{\sigma,\varepsilon}$ as
$$
Z = 1 + \f{1}{2(k+2)} \sigma^2 \sum_{i=1}^k H^i(\phi(\mathbf{0}))^2 + O(\delta^3),
$$
and so
$$
\f{1}{Z} = 1 - \f{1}{2(k+2)}\sigma^2 \sum_{i=1}^k H^i(\phi(\mathbf{0}))^2 + O(\delta^3).
$$
\end{proof}

\subsection{Proposed transport map}
\label{section-proposed-transport-map}
As mentioned in the introduction, when considering an embedded manifold, it is crucial to include the mean curvature in the transport map. We will show that the transport map proposed below is an approximate transport map with degree 3.
We then present a criterion for optimality of the proposed map in \cref{test-function-gradient}.

\begin{definition}
\label{proposed-transport-map}
Define $T : B_{\sigma,\varepsilon}(x_0) \rightarrow B_{\sigma, \varepsilon}(y)$ in Fermi coordinates as
$$
\begin{aligned}
T(\phi(\alpha,\beta)) := \phi\bigg(&\delta-\alpha_1, \alpha_2, \ldots, \alpha_m, \\
& \beta_1-\f{1}{2}(\sigma^2 -\beta_1^2)(\delta-2\alpha_1) \partial_{\alpha_1} (H^1 \circ \phi)(\mathbf{0}),\\
&\ldots,\\
&\beta_k-\f{1}{2}(\sigma^2 -\beta_k^2)(\delta-2\alpha_1) \partial_{\alpha_1} (H^k \circ \phi)(\mathbf{0})\bigg).
\end{aligned}
$$
Denote by $\hat{\alpha} = (\alpha_2,\ldots,\alpha_m)$ and similarly for $\hat{\alpha}'$, and denote the input vector on the right in the above definition as $(\alpha',\beta')$. Note that 
$$
\alpha'_1 = \delta-\alpha_1, \quad \hat{\alpha}'=\hat{\alpha}, \quad \beta' = \beta + O(\delta^3).
$$
\end{definition}

\begin{remark}
Observe that $T$ is a local diffeomorphism and 
\begin{equation}
|\det D(\phi^{-1} \circ T \circ \phi)(\alpha,\beta))| = 1 - \sum_{i=1}^k \beta_i(\delta-2\alpha_1) \partial_{\alpha_1} (H^i\circ\phi)(\mathbf{0}) + O(\delta^3)
\end{equation}
and deduce
\begin{equation}
\label{determinant-coordinates}
\begin{aligned}
|\det D(\phi^{-1} \circ T \circ \phi)^{-1}(\alpha',\beta'))| &= |\det D(\phi \circ T \circ \phi)(\alpha,\beta))|^{-1}\\
&= 1 + \sum_{i=1}^k \beta_i(\delta-2\alpha_1) \partial_{\alpha_1} (H^i\circ\phi)(\mathbf{0}) + O(\delta^3).
\end{aligned}
\end{equation}

\end{remark}

\begin{remark}
    The third order terms in the definition of $T$ are adjustments to cancel out second order terms in the proof of \cref{proposed-approximate} below,  obtaining an approximate transport of degree 3 as a result. In fact, the form of $T$ is tailored precisely for this to occur. It turns out these third order adjustment terms do not influence the Wasserstein distance computation up to order 4.
\end{remark}

We need two general lemmas to show that $T$ is an approximate transport of degree 3.

\begin{lemma}[Density under pushforward]
\label{density-change-of-variable}
    Let $\mathcal{X},\mathcal{Y}$ be measurable spaces, $\phi: \mathcal{X} \rightarrow \mathcal{Y}$ a measurable bijection with measurable inverse, and $\mu, \nu$ two measures on $\mathcal{X}$ with $\mu \ll \nu$. Then the push-forward measures are also absolutely continuous with density
    \begin{equation}
    \label{density-change-of-variable-formula}
    \frac{d(\psi_*\mu)}{d(\psi_*\nu)}(x) = \frac{d\mu}{d\nu}(\psi^{-1}(x)).
    \end{equation}
\end{lemma}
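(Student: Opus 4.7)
The plan is a direct verification via the defining property of the Radon--Nikodym derivative, using the change-of-variables formula for push-forwards. First I would check that $\psi_*\mu \ll \psi_*\nu$: if $(\psi_*\nu)(A) = \nu(\psi^{-1}(A)) = 0$, then because $\mu \ll \nu$ we have $\mu(\psi^{-1}(A)) = 0$, i.e. $(\psi_*\mu)(A) = 0$. Thus the Radon--Nikodym derivative on the left-hand side exists, and the task reduces to identifying it.

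Next I would verify the candidate density $h(x) := \frac{d\mu}{d\nu}(\psi^{-1}(x))$ by showing that, for every measurable $A \subseteq \mathcal{Y}$,
\begin{equation*}
(\psi_*\mu)(A) \;=\; \int_A h(x)\, d(\psi_*\nu)(x).
\end{equation*}
Starting from the left, $(\psi_*\mu)(A) = \mu(\psi^{-1}(A))$ by definition of push-forward, and since $\mu \ll \nu$ this equals $\int_{\psi^{-1}(A)} \frac{d\mu}{d\nu}(y)\, d\nu(y)$. Rewriting the integrand as $\mathbf{1}_A(\psi(y)) \cdot h(\psi(y))$ (using that $h \circ \psi = d\mu/d\nu$ on $\mathcal{X}$) and applying the standard change-of-variables $\int f \circ \psi \, d\nu = \int f \, d(\psi_*\nu)$ with $f(x) = \mathbf{1}_A(x) h(x)$ produces exactly $\int_A h(x)\, d(\psi_*\nu)(x)$. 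Uniqueness of the Radon--Nikodym derivative then yields \eqref{density-change-of-variable-formula}.

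There is essentially no obstacle: the bijectivity and bimeasurability of $\psi$ make both the absolute-continuity step and the change-of-variables step completely routine, and no measurability assumptions beyond those hypothesized are needed. The only point worth noting is that the measurability of $h$ on $\mathcal{Y}$ requires the measurability of $\psi^{-1}$, which is why this hypothesis is included in the statement.
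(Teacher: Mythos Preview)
Your proof is correct and follows essentially the same approach as the paper: verify $(\psi_*\mu)(A) = \int_A \frac{d\mu}{d\nu}(\psi^{-1}(x))\,d(\psi_*\nu)(x)$ by unwinding the push-forward, inserting the Radon--Nikodym density, and changing variables. Your version is in fact slightly more thorough, since you explicitly check $\psi_*\mu \ll \psi_*\nu$ and note why measurability of $\psi^{-1}$ is needed.
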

\begin{proof}
For all measurable sets $A \subset \mathcal{X}$,
$$
\begin{aligned}
\psi_*\mu(A)&=\mu(\psi^{-1}(A)) \\
&=\int_{\psi^{-1}(A)} \frac{d\mu}{d\nu}(x)d\nu(x)\\
&= \int_{\psi^{-1}(A)} \frac{d\mu}{d\nu}(\psi^{-1}\circ \psi(x))d\nu(x)\\
&= \int_A \frac{d\mu}{d\nu}(\psi^{-1}(x))d(\psi_*\nu)(x).
\end{aligned}
$$
\end{proof}

Noting that the representations \eqref{skew-decomposition-general} and \eqref{skew-decomposition-hypersurface} are decompositions into skew-products of two measures, the following will be used for density comparisons.

Let $\mathcal{X},\mathcal{Y}$ be measurable spaces. Given measures 
$\{\mu_1^y: y\in \mathcal{Y}\}$ on $\mathcal{X}$ and a measure $\mu_2$ on $\mathcal{Y}$, the skew-product is defined as follows:
For all bounded measurable real valued functions $f$ on $\mathcal{X}\otimes \mathcal{Y}$,
    $$
    \mu_1 \otimes \mu_2(f):= \int_\mathcal{Y} \int_\mathcal{X} f(x,y) d\mu_1^y(x) d\mu_2(y).
    $$

\begin{lemma}[Skew-product density factorization]
\label{skew-product-density}
    Consider two families of measures $(\mu_1^y)_{y \in \mathcal{Y}}$ and $(\nu_1^y)_{y \in \mathcal{Y}}$ on $\mathcal{X}$ such that $\mu_1^y \ll \nu_1^y$ for every $y \in \mathcal{Y}$ and the map $(x,y) \mapsto \frac{d\mu_1^y}{d\nu_1^y}(x)$ is measurable. Furthermore, let $\mu_2$ and $\nu_2$ be measures on $\mathcal{Y}$ with $\mu_2 \ll \nu_2$.
Consider the skew products of $(\mu_1^y)_{y\in \mathcal{Y}}$ with $\mu_2$ and that of $(\nu_1^y)_{y \in \mathcal{Y}}$ with $\nu_2$.
Then $
\mu_1 \otimes \mu_2 \ll \nu_1 \otimes \nu_2 $ and
$$
\frac{d(\mu_1\otimes \mu_2)}{d(\nu_1 \otimes \nu_2)}(x,y) = \frac{d\mu_1^y}{d\nu_1^y}(x) \frac{d\mu_1}{d\nu_2}(y).
$$
\end{lemma}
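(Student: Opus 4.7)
The plan is to verify the claimed density formula directly by testing against an arbitrary bounded measurable function $f:\mathcal{X}\times\mathcal{Y}\to\mathbb{R}$ and invoking the definition of the skew product. First, I would apply the fiberwise Radon--Nikodym theorem: for each $y\in\mathcal{Y}$, since $\mu_1^y\ll\nu_1^y$,
$$
\int_\mathcal{X} f(x,y)\,d\mu_1^y(x) = \int_\mathcal{X} f(x,y)\,\frac{d\mu_1^y}{d\nu_1^y}(x)\,d\nu_1^y(x).
$$
The measurability hypothesis on $(x,y)\mapsto \frac{d\mu_1^y}{d\nu_1^y}(x)$ guarantees the integrand on the right is jointly measurable, so Fubini/Tonelli applies to the subsequent iterated integral.

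Next I would apply the Radon--Nikodym theorem on $\mathcal{Y}$ using $\mu_2\ll\nu_2$ to the outer integral of the resulting function of $y$:
$$
\int_\mathcal{Y}\!\left(\int_\mathcal{X} f(x,y)\,\frac{d\mu_1^y}{d\nu_1^y}(x)\,d\nu_1^y(x)\right) d\mu_2(y)
= \int_\mathcal{Y}\!\int_\mathcal{X} f(x,y)\,\frac{d\mu_1^y}{d\nu_1^y}(x)\,\frac{d\mu_2}{d\nu_2}(y)\,d\nu_1^y(x)\,d\nu_2(y).
$$
Recognising the right-hand side as the integral of $f\cdot\rho$ against the skew product $\nu_1\otimes\nu_2$, where $\rho(x,y):=\frac{d\mu_1^y}{d\nu_1^y}(x)\,\frac{d\mu_2}{d\nu_2}(y)$, and the left-hand side as $(\mu_1\otimes\mu_2)(f)$ by definition, yields
$$
(\mu_1\otimes\mu_2)(f) = \int_{\mathcal{X}\times\mathcal{Y}} f(x,y)\,\rho(x,y)\,d(\nu_1\otimes\nu_2)(x,y)
$$
for every bounded measurable $f$. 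Absolute continuity $\mu_1\otimes\mu_2\ll\nu_1\otimes\nu_2$ then follows by taking $f=\mathbbm{1}_A$ for any $A$ with $(\nu_1\otimes\nu_2)(A)=0$, and uniqueness of the Radon--Nikodym derivative identifies $\rho$ as the desired density.

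There is no substantive obstacle here; the only mild care is ensuring joint measurability of the fiberwise density so Fubini is legitimate, which is exactly what the hypothesis on $(x,y)\mapsto \frac{d\mu_1^y}{d\nu_1^y}(x)$ is assumed to provide. (I note that the displayed formula in the statement appears to have a typographical slip: the second factor should read $\frac{d\mu_2}{d\nu_2}(y)$, which is what the above derivation produces.)
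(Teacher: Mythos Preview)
Your proposal is correct and follows essentially the same route as the paper: test against an arbitrary bounded measurable $f$, insert the fiberwise and base Radon--Nikodym densities, and recognise the result as integration against $\nu_1\otimes\nu_2$. You are slightly more explicit than the paper about the role of the joint-measurability hypothesis and about deducing absolute continuity, and your observation that the second factor in the displayed formula should read $\frac{d\mu_2}{d\nu_2}(y)$ is correct.
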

\begin{proof}
    Plugging in the densities, $\forall f \in \mathcal{M}_\mathcal{X} \otimes \mathcal{M}_\mathcal{Y}$ bounded:
    $$
    \begin{aligned}
    \mu_1 \otimes \mu_2(f) &= \int_\mathcal{Y} \int_\mathcal{X} f(x,y) d\mu_1^y(x) d\mu_2(y) \\
    &= \int_\mathcal{Y} \int_\mathcal{X} f(x,y) \frac{d\mu_1^y}{d\nu_1^y}(x)\frac{d\mu_2}{d\nu_2}(y) d\nu_1^y(x) d\nu_2(y) \\
    &= \int_\mathcal{Y} \int_\mathcal{X} f(x,y) \frac{d\mu_1^y}{d\nu_1^y}(x)\frac{d\mu_2}{d\nu_2}(y) d(\nu_1 \otimes \nu_2)(x,y) \\
    &= \int_\mathcal{Y} \int_\mathcal{X} f(x,y)\frac{d(\mu_1\otimes \mu_2)}{d(\nu_1 \otimes \nu_2)}(x,y)  d(\nu_1 \otimes \nu_2)(x,y).
\end{aligned}
$$
\end{proof}

We verify that the density of $T_*\mu_{x_0}^{\sigma,\varepsilon}$ via $T$ matches that of $\mu_y^{\sigma,\varepsilon}$ up to $O(\delta^3)$.
\begin{proposition}
\label{proposed-approximate}
    The proposed map is an approximate transport map of degree 3, i.e.
    $$
    \f{d(T_* \mu_{x_0}^{\sigma,\varepsilon})}{d\mu_y^{\sigma,\varepsilon}}(\phi(\alpha,\beta)) = 1 + O(\delta^3).
    $$
\end{proposition}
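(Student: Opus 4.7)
The plan is to carry out the density computation in Fermi coordinates and exhibit a precise term-by-term cancellation at second order in $\delta$. First I would apply \cref{density-change-of-variable} to the diffeomorphism $\phi$ to reduce the claim to a density matching in Fermi coordinates:
$$
\frac{d(T_*\mu_{x_0}^{\sigma,\varepsilon})}{d\mu_y^{\sigma,\varepsilon}}\bigl(\phi(\alpha',\beta')\bigr)
=\frac{\rho_{x_0}\bigl(\tilde T^{-1}(\alpha',\beta')\bigr)\,\bigl|\det D\tilde T^{-1}(\alpha',\beta')\bigr|}{\rho_y(\alpha',\beta')},
$$
where $\tilde T:=\phi^{-1}\circ T\circ\phi$ acts on Fermi coordinates and $\rho_x$ denotes the Lebesgue density of $\phi_*^{-1}\mu_x^{\sigma,\varepsilon}$, given explicitly by \cref{test-measures-in-fermi-coordinates}. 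The goal is then to show that the numerator and denominator agree up to a multiplicative factor of $1+O(\delta^3)$ uniformly in $(\alpha',\beta')$ in the common support.

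Next I would substitute the expansions of $\rho_{x_0}$ and $\rho_y$ from \cref{test-measures-in-fermi-coordinates} together with the expansion of $|\det D\tilde T^{-1}|$ from the remark following \cref{proposed-transport-map}. Since $\tilde T$ sends $(\alpha,\beta)$ to $(\delta-\alpha_1,\hat\alpha,\beta+O(\delta^3))$, substituting $\beta=\beta'+O(\delta^3)$ into any expression that already carries a factor of $\beta$ produces an error absorbed into $O(\delta^4)$; hence the non-trivial part of the substitution occurs only through the change $\alpha_1=\delta-\alpha'_1$ in the $\alpha_1$-dependent coefficient of the mixed term $-\sum_{i,j}\alpha_j\beta_i\partial_{\alpha_j}(H^i\circ\phi)(\mathbf{0})$ at $j=1$. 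All other terms of $\rho_{x_0}$---the $\beta_i H^i$ term, the two quadratic-in-$\beta$ terms, the quadratic metric term in $\hat\alpha$ (which lives in indices $q,\ell\geq 2$ and is thus left untouched by the change in $\alpha_1$), and the normalization constant $Z$ (which depends only on geometric data at $\phi(\mathbf{0})$)---are invariant under the substitution to the required order.

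The proof then reduces to an algebraic identity at second order in $\delta$: substituting $\alpha_1=\delta-\alpha'_1$ into the $j=1$ contribution of $\rho_{x_0}$ produces a term of the form $(\alpha'_1-\delta)\beta'_i\partial_{\alpha_1}(H^i\circ\phi)(\mathbf{0})$, the Jacobian factor $|\det D\tilde T^{-1}|$ contributes an independent second-order piece of the same bilinear structure in $(\alpha'_1,\beta')$, and their sum has to reproduce the analogous term $-\alpha'_1\beta'_i\partial_{\alpha_1}(H^i\circ\phi)(\mathbf{0})$ appearing in $\rho_y$. Checking this is a one-line identity, and it is precisely the reason for the specific form of the correction $\tfrac12(\sigma^2-\beta_j^2)(\delta-2\alpha_1)\partial_{\alpha_1}(H^j\circ\phi)(\mathbf{0})$ in \cref{proposed-transport-map}: the monomial $(\delta-2\alpha_1)$ is designed so that its $\alpha_1$-derivative supplies exactly the coefficient needed in the determinant, while the prefactor $\tfrac12(\sigma^2-\beta_j^2)$ vanishes on $\{\beta_j=\pm\sigma\}$ so that the pushforward stays inside $B_{\sigma,\varepsilon}(y)$ and contributes trivially (as a $\beta$-derivative) at the order considered. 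The main obstacle is therefore not analytical but bookkeeping: tracking $O(\delta^2)$ contributions from three independent sources---the expansion of $\rho_{x_0}$ evaluated at the transported point, the Jacobian of $\tilde T^{-1}$, and the expansion of $\rho_y$---and verifying they balance. Once this cancellation closes, the leftover errors from the $\beta\mapsto\beta'$ substitution, from the third-order Taylor remainders, and from the ratio of normalization constants are all $O(\delta^3)$, which gives the claim.
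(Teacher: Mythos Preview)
Your proposal is correct and follows essentially the same route as the paper's proof: pull back to Fermi coordinates via \cref{density-change-of-variable}, write the pushforward density as $\rho_{x_0}(\tilde T^{-1})\,|\det D\tilde T^{-1}|$, and compare term-by-term with $\rho_y$ using the expansion of \cref{test-measures-in-fermi-coordinates} together with the Jacobian expansion from the remark after \cref{proposed-transport-map}. Your explicit identification of the second-order cancellation (the $j=1$ mixed term combining with the Jacobian contribution to reproduce $-\alpha'_1\beta'_i\partial_{\alpha_1}(H^i\circ\phi)(\mathbf 0)$) is exactly what the paper summarizes as ``the second order terms cancel out''; the only cosmetic difference is that the paper invokes \cref{skew-product-density} to justify the density ratio, whereas you use the ordinary change-of-variables formula directly.
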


\begin{proof}
    First, combining the elementary change of variable formula with the Fermi coordinate representation of $\mu_{x_0}^{\sigma,\varepsilon}$, with notation of \cref{proposed-transport-map} we have
    \begin{equation}
    \label{measure-1}
    \begin{aligned}
    &(\phi_*^{-1} T_* \mu_{x_0}^{\sigma,\varepsilon})(d\alpha,d\beta) \\
    &= (\phi^{-1} \circ T  \circ \phi)_* (\phi_*^{-1}\mu_{x_0}^{\sigma,\varepsilon})(d\alpha,d\beta) \\
    &= \f{1}{Z} \mathbbm{1}_{\tilde{B}_{\sigma,\varepsilon}}(\alpha'_1-\delta, \hat{\alpha}', \beta')
    \bigg( 1 - \sum_{i=1}^k \beta'_i H^i(\phi(\mathbf{0})) - \sum_{i=1}^k \sum_{j=1}^m \alpha'_j \beta'_i \partial_{\alpha_j}(H_{\mathbf{n}_i} \circ \phi)(\mathbf{0}) \\
    & \hspace{0.5cm} - \sum_{i,j=1}^k \beta'_i \beta'_j \partial_{\beta_j} (H^i \circ \phi)(\mathbf{0}) + \f{1}{2} \sum_{i,j=1}^k \beta'_i \beta'_j H^i(\phi(\mathbf{0})) H^j(\phi(\mathbf{0})) \\
    &\hspace{0.5cm} + \f{1}{4} \sum_{q,\ell=2}^m \sum_{i=1}^m \alpha_q \alpha_\ell \partial_{\alpha_q}\partial_{\alpha_\ell} g_{ii}(\mathbf{0}) + \sum_{i=1}^k \beta_i(\delta-2\alpha_1) \partial_{\alpha_1} (H^i\circ\phi)(\mathbf{0}) + O(\delta^3) \bigg) d\alpha d\beta
    \end{aligned}
    \end{equation}
    using the expansions \eqref{determinant-coordinates} for the determinant of $\phi^{-1} \circ T \circ \phi$ and \eqref{test-measures-fermi} for the coordinate representation of $\phi^{-1}_* \mu_{x_0}^{\sigma,\varepsilon}$.
  
    We use \cref{density-change-of-variable} to push the density into Fermi coordinates, and then \cref{skew-product-density} allows us to take the ratio of the densities of \eqref{measure-1} and \eqref{test-measures-fermi}, obtaining
    $$
    \begin{aligned}
    \f{d(T_* \mu_{x_0}^{\sigma,\varepsilon})}{d\mu_y^{\sigma,\varepsilon}} (\phi(\alpha,\beta)) &= \f{d(\phi_*^{-1} T_* \mu_{x_0}^{\sigma,\varepsilon})}{d(\phi_*^{-1} \mu_y^{\sigma,\varepsilon})} (\alpha,\beta) \\
    &= \mathbbm{1}_{\tilde{B}_{\sigma,\varepsilon}(\mathbf{0})}(\alpha_1-\delta, \hat{\alpha},\beta) \left( 1 + O(\delta^3)\right)
    \end{aligned}
    $$
    because the second order terms cancel out. 
    Here we also used that $T$ is a diffeomorphism from $\tilde{B}_{\sigma, \varepsilon}(\mathbf{0}_{m+k})$ to $\tilde{B}_{\sigma, \varepsilon}(\delta, \mathbf{0}_{m+k-1})$, hence
    $$
    \mathbbm{1}_{\tilde{B}_{\sigma,\varepsilon}(\delta,\mathbf{0})}(\alpha',\beta') = \mathbbm{1}_{\tilde{B}_{\sigma,\varepsilon}(\mathbf{0})}(\alpha_1-\delta, \hat{\alpha},\beta).
    $$
\end{proof}

\begin{remark}
Building upon the preceding proposition and leveraging \cref{asymptotically-optimal-transport}, we readily deduce that the proposed transport map satisfies:
$$
    W_1(\mu_{x_0}^{\sigma,\varepsilon}, \mu_y^{\sigma,\varepsilon}) 
    = W_1(\mu_{x_0}^{\sigma,\varepsilon}, T_* \mu_{x_0}^{\sigma,\varepsilon}) + O(\delta^4),
$$  
taking also into account that $\textrm{supp } T_*\mu_{x_0}^{\sigma,\varepsilon} = \textrm{supp } \mu_y^{\sigma,\varepsilon}$ leading to $\textrm{diam } \textrm{supp } T_*\mu_{x_0}^{\sigma,\varepsilon} = O(\delta)$ when $\sigma \vee \varepsilon \leq \f{\delta}{4}$.
Thus, when computing the coarse curvature, we may use $W_1(\mu_{x_0}^{\sigma,\varepsilon}, T_* \mu_{x_0}^{\sigma,\varepsilon})$. This is justified as terms involving the second fundamental form at the point $x_0$ emerge only at the third order in the expansion of $W_1(\mu_{x_0}^{\sigma,\varepsilon}, \mu_y^{\sigma,\varepsilon})$, making precision up to $O(\delta^4)$ sufficient.

\end{remark}

The following will allow us to deduce a Wasserstein lower bound from an upper bound provided by an approximate transport map of degree $3$, and merging these into a both-sided estimate up to $O(\delta^4)$. 

\begin{lemma}
    \label{test-function-gradient}
    If $f: B_{2\delta}(x_0) \subset \R^{m+k} \rightarrow \R$ is smooth and takes the form
    \begin{equation}
    \label{test-function-assumption}
    f(Tz) - f(z) = \|Tz - z\| + O(\delta^4) = O(\delta)
    \end{equation}
    and the magnitude of its gradient satisfies
    $$
    \sup_{z \in B_{2\delta}(x_0)} \|\nabla f(z)\| = 1 + O(\delta^3)
    $$
    then for all $\sigma,\varepsilon,\delta$ sufficiently small with $\sigma \vee \varepsilon \leq \f{\delta}{4}$,
    $$
    W_1(\mu_{x_0}^{\sigma,\varepsilon}, T_*\mu_{x_0}^{\sigma,\varepsilon}) = \int \|Tz-z\| d\mu_{x_0}^{\sigma,\varepsilon}(z) + O(\delta^4) = \int (f(Tz) - f(z)) d\mu_{x_0}^{\sigma,\varepsilon}(z) + O(\delta^4).
    $$
\end{lemma}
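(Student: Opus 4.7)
The plan is to sandwich $W_1(\mu_{x_0}^{\sigma,\varepsilon}, T_*\mu_{x_0}^{\sigma,\varepsilon})$ between matching upper and lower bounds which both equal $\int (f(Tz)-f(z))\,d\mu_{x_0}^{\sigma,\varepsilon}(z)$ up to $O(\delta^4)$. Once this first equality is proved, the second equality in the conclusion follows immediately from integrating hypothesis \eqref{test-function-assumption} against the probability measure $\mu_{x_0}^{\sigma,\varepsilon}$. Throughout, since $\sigma \vee \varepsilon \leq \delta/4$, the supports of both $\mu_{x_0}^{\sigma,\varepsilon}$ and $T_*\mu_{x_0}^{\sigma,\varepsilon}$ lie inside $B_{2\delta}(x_0)$, so $f$ is defined on the relevant set.

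For the upper bound, the graph coupling $(\mathrm{id}, T)_* \mu_{x_0}^{\sigma,\varepsilon}$ has marginals $\mu_{x_0}^{\sigma,\varepsilon}$ and $T_*\mu_{x_0}^{\sigma,\varepsilon}$, hence by definition of $W_1$,
$$
W_1(\mu_{x_0}^{\sigma,\varepsilon}, T_*\mu_{x_0}^{\sigma,\varepsilon}) \leq \int \|Tz - z\| \, d\mu_{x_0}^{\sigma,\varepsilon}(z) = \int (f(Tz) - f(z))\, d\mu_{x_0}^{\sigma,\varepsilon}(z) + O(\delta^4),
$$
using \eqref{test-function-assumption} in the last step.

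For the lower bound I would appeal to Kantorovich-Rubinstein duality applied to a rescaled test function. The gradient bound gives that $f$ is $(1+O(\delta^3))$-Lipschitz on the convex ball $B_{2\delta}(x_0)$, so setting $g := f/(1 + O(\delta^3))$ yields a $1$-Lipschitz function there, which I extend to a $1$-Lipschitz function on $\R^{m+k}$ via the McShane-Whitney extension. Then
$$
W_1(\mu_{x_0}^{\sigma,\varepsilon}, T_*\mu_{x_0}^{\sigma,\varepsilon}) \geq \int g \, d(T_*\mu_{x_0}^{\sigma,\varepsilon} - \mu_{x_0}^{\sigma,\varepsilon}) = \bigl(1 - O(\delta^3)\bigr) \int (f(Tz) - f(z))\, d\mu_{x_0}^{\sigma,\varepsilon}(z).
$$

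The only delicate point is ensuring the scaling factor $(1 + O(\delta^3))^{-1}$ does not degrade the precision beyond $O(\delta^4)$. This works precisely because \eqref{test-function-assumption} forces $f(Tz) - f(z) = O(\delta)$, so the integral $\int(f(Tz)-f(z))\,d\mu_{x_0}^{\sigma,\varepsilon}$ is itself $O(\delta)$, and the scaling-induced error is $O(\delta) \cdot O(\delta^3) = O(\delta^4)$, exactly matching the target remainder. Combining the two bounds gives the first claimed equality, and substituting \eqref{test-function-assumption} pointwise in the integrand yields the second.
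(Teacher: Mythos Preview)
Your proof is correct and follows essentially the same approach as the paper: both obtain the upper bound from the transport coupling $(\mathrm{id},T)_*\mu_{x_0}^{\sigma,\varepsilon}$, the lower bound from Kantorovich--Rubinstein duality applied to $f$ rescaled by $(1+O(\delta^3))^{-1}$, and both rely on the observation that the $O(\delta^3)$ rescaling error interacts with the $O(\delta)$ size of the integral to produce only an $O(\delta^4)$ correction. Your explicit mention of the McShane--Whitney extension is a nice touch the paper leaves implicit.
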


\begin{proof}
    Using the expansion $\f{1}{1+a} = 1 - a + O(a^2)$, we deduce that
    $$
    \left(\sup_{z' \in B_{2\delta}(x_0)} \|\nabla f(z')\|\right)^{-1} f(z) = (1 + O(\delta^3))f(z).
    $$
    By the mean value theorem, a differentiable function divided by the supremum of its gradient is 1-Lipschitz. Then by Kantorovich-Rubinstein duality
$$
\begin{aligned}
W_1(\mu_{x_0}^{\sigma,\varepsilon}, T_*\mu_{x_0}^{\sigma,\varepsilon}) &\geq \int_{B_{\sigma,\varepsilon}(x_0)} \left(\sup_{z' \in B_{2\delta}(x_0)} \|\nabla f(z')\|\right)^{-1} (f(Tz) -f(z)) \mu_{x_0}^{\sigma,\varepsilon}(dz) \\
&= \int_{B_{\sigma,\varepsilon}(x_0)} (1 + O(\delta^3))(f(Tz) -f(z)) d\mu_{x_0}^{\sigma,\varepsilon}(\alpha,\beta) \\
&= \int_{B_{\sigma,\varepsilon}(x_0)} (f(Tz) -f(z))\mu_{x_0}^{\sigma,\varepsilon}(dz) + O(\delta^4)\\
&= \int_{B_{\sigma,\varepsilon}(x_0)} \|Tz-z\| \mu_{x_0}^{\sigma,\varepsilon}(dz) + O(\delta^4) \\
&\geq W_1(\mu_{x_0}^{\sigma,\varepsilon}, T_*\mu_{x_0}^{\sigma,\varepsilon}) + O(\delta^4),
\end{aligned}
$$
using the assumption \eqref{test-function-assumption} on the third and fourth line.
\end{proof}

Finally, in order to integrate over the correct range of Fermi coordinates to cover precisely $B_{\sigma,\varepsilon}(x_0)$ as the support of $\mu_{x_0}^{\sigma,\varepsilon}$, we need to find the range parameter $\varepsilon(\alpha)$ such that if
$$
\mathbf{B}_{\sigma,\varepsilon}(\mathbf{0}) := \{(\alpha, \beta): |\alpha_1| \leq \varepsilon, \sum_{j=2}^m \alpha_j^2 \leq \varepsilon(\alpha)^2, \sum_{i=1}^k \beta_i^2 \leq \sigma^2 \} \subset \R^{m+k}
$$
then
$$
\phi(\mathbf{B}_{\sigma, \varepsilon}(\mathbf{0})) = B_\varepsilon(x_0).
$$
This is a necessary consideration, because in general non-flat spaces
$$
\phi(\tilde{B}_{\sigma,\varepsilon}(\mathbf{0})) \neq B_{\sigma,\varepsilon}(x_0).
$$

The following is a classical result of Toponogov, which is a generalization of Pythagoras theorem for Riemannian manifolds and gives a characterisation of sectional curvature. See e.g. \cite{Meyer2004ToponogovsTA} for a proof.
\begin{lemma}
For any point $x_0\in M$ and any $w_1,w_2 \in T_{x_0}M$ sufficiently small, the Riemannian distance between $\exp_{x_0} (w_1)$ and $\exp_{x_0} (w_2)$ has the expansion
$$
d(\exp_{x_0}(w_1), \exp_{x_0}(w_2)) = \|w_1-w_2\| - \f{1}{3} \< R(w_1,w_2)w_2,w_1 \> + O(\max(\|w_1\|, \|w_2\|)^5).
$$
\end{lemma}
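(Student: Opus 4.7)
The plan is to bracket $d(\exp_{x_0}(w_1),\exp_{x_0}(w_2))$ between matching upper and lower bounds via Jacobi-field expansions of $D\exp_{x_0}$ near the origin of $T_{x_0}M$. A quick dimensional check suggests that the displayed formula is really the expansion of $d^{2}$ with error $O(\max(\|w_1\|,\|w_2\|)^{6})$, but the strategy is the same either way. For the upper bound I would introduce the competing curve
\[
\sigma(s):=\exp_{x_0}\bigl(w_1+s(w_2-w_1)\bigr),\qquad s\in[0,1],
\]
connecting the two endpoints, so that $d(\exp_{x_0}(w_1),\exp_{x_0}(w_2))\le L(\sigma)$. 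Its velocity $\sigma'(s)=(D\exp_{x_0})_{v(s)}(w_2-w_1)$, where $v(s)=w_1+s(w_2-w_1)$, equals $J(1)$ for the Jacobi field along the radial geodesic $t\mapsto\exp_{x_0}(tv(s))$ with $J(0)=0$ and $\nabla_t J(0)=w_2-w_1$. Iterating the Jacobi equation $J''=-R(J,\dot\gamma)\dot\gamma$ against this vanishing initial condition yields the standard expansion
\[
(D\exp_{x_0})_v(u)=u-\tfrac{1}{6}R(u,v)v+O\bigl(\max(\|u\|,\|v\|)^{4}\bigr),
\]
and squaring gives $\|\sigma'(s)\|^2 = \|w_2-w_1\|^2 - \tfrac{1}{3}\<R(w_2-w_1,v(s))v(s),w_2-w_1\> + O(\max(\|w_1\|,\|w_2\|)^{5})$.

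Next I would integrate this over $s\in[0,1]$. The integrand is a polynomial in $s$, but the curvature identities $R(X,X)=0$ and $\<R(X,Y)Z,W\>=-\<R(X,Y)W,Z\>$ wipe out the $s$- and $s^{2}$-coefficients, and a pair-symmetry rewriting leaves
\[
\int_0^1\<R(w_2-w_1,v(s))v(s),w_2-w_1\>\,ds \;=\; \<R(w_1,w_2)w_2,w_1\>.
\]
Plugging this into $L(\sigma)=\int_0^1\|\sigma'(s)\|\,ds$ (or equivalently $L(\sigma)^{2}$) then delivers the claimed expansion as an upper bound on the distance.

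For the matching lower bound, the plan is to invoke the classical Taylor expansion of the Riemannian metric in geodesic normal coordinates around $x_0$,
\[
g_{ij}(x)=\delta_{ij}-\tfrac{1}{3}R_{ikjl}(x_0)\,x^{k}x^{l}+O(\|x\|^{3}),
\]
and to estimate from below the length of an \emph{arbitrary} admissible curve joining the coordinate points $w_1$ and $w_2$ by the same quantity up to the stated order. The hard part will be this lower bound: the upper-bound side is a direct Jacobi-field computation on the specific competitor $\sigma$, whereas certifying that $\sigma$ is close to a minimising geodesic to order four in $\max(\|w_i\|)$ requires the normal-coordinate metric expansion above or, equivalently, Toponogov's comparison applied to a suitable space form. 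This is precisely the step which the authors outsource to \cite{Meyer2004ToponogovsTA}.
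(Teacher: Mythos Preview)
The paper does not actually prove this lemma: it is stated as a classical result attributed to Toponogov, with only a pointer to \cite{Meyer2004ToponogovsTA} in place of a proof. There is therefore no argument in the paper against which to compare your proposal; you have in fact gone further than the authors do.

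Your dimensional observation is correct and worth recording. As written, the left-hand side and $\|w_1-w_2\|$ are first order in $\max(\|w_1\|,\|w_2\|)$, whereas the curvature term $\langle R(w_1,w_2)w_2,w_1\rangle$ is fourth order, so the displayed identity only makes sense as the expansion of $d^{2}$ rather than $d$. This is consistent with how the paper immediately uses it, namely to obtain a Pythagoras-type relation $\varepsilon^{2}=\alpha_1^{2}+\varepsilon(\alpha)^{2}+\textrm{(higher order)}$ for the range parameter $\varepsilon(\alpha)$.

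Your sketch is the standard route and is correct in its essentials. The Jacobi expansion $(D\exp_{x_0})_v(u)=u-\tfrac{1}{6}R(u,v)v+O(\|v\|^4)$, the competitor-curve upper bound via $\sigma(s)=\exp_{x_0}(w_1+s(w_2-w_1))$, and the reduction of the $s$-integral to $\langle R(w_1,w_2)w_2,w_1\rangle$ via the curvature symmetries all check out (the integral computation really does collapse to exactly that quantity). You are also right that the lower bound is the less trivial direction and requires either the normal-coordinate metric expansion $g_{ij}(x)=\delta_{ij}-\tfrac{1}{3}R_{ikjl}\,x^{k}x^{l}+O(\|x\|^{3})$ applied to an arbitrary connecting curve, or an equivalent comparison argument; this is precisely the content the paper delegates to the cited reference.
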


As a consequence, we deduce that given a coordinate $\alpha_1 \in (-\varepsilon, \varepsilon)$, the range parameter $\varepsilon(\alpha)$ is characterized by the relation
$$
\varepsilon^2 = \alpha_1^2 + \varepsilon(\alpha)^2 + O(\max(\alpha_1^2, \varepsilon(\alpha)^2))
$$
where the coefficient in the remainder term only depends on a fixed neighbourhood of $x_0$. This implies $\varepsilon(\alpha) = O(\varepsilon)$ and 
$$
\varepsilon(\alpha) = (1+ O(\varepsilon^2)) \sqrt{\varepsilon^2 -\alpha_1^2} = \sqrt{\varepsilon^2 -\alpha_1^2} + O(\varepsilon^3).
$$
We shall label the remainder term $r(\alpha) = O(\varepsilon^3)$ for the purpose of the following proof. The next corollary will allow us to ignore the distinction between $\tilde{B}_{\sigma,\varepsilon}(\mathbf{0})$ and $\mathbf{B}_{\sigma,\varepsilon}(\mathbf{0})$ up to $O(\delta^4)$ whenever we integrate with respect to the test measure $\mu_{x_0}^{\sigma,\varepsilon}$ in Fermi coordinates.
\begin{corollary}
If $P: \R^{m+k} \rightarrow \R$ is a polynomial with no constant term and $\max(\sigma,\varepsilon) \leq \delta$ then
$$
\int_{\tilde{B}_{\sigma,\varepsilon}(\mathbf{0})} P(\alpha,\beta) d (\phi^{-1}_* \mu_{x_0}^{\sigma,\varepsilon})(\alpha,\beta)= \int_{\mathbf{B}_{\sigma,\varepsilon}(\mathbf{0})} P(\alpha,\beta) d (\phi^{-1}_* \mu_{x_0}^{\sigma,\varepsilon})(\alpha,\beta) + O(\delta^4).
$$
\end{corollary}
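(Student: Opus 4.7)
The strategy is to reduce the difference between the two integrals to an integral of $P$ over the thin shell $\mathbf{B}_{\sigma,\varepsilon}(\mathbf{0})\setminus\tilde{B}_{\sigma,\varepsilon}(\mathbf{0})$, then treat the linear and the quadratic-or-higher parts of $P$ separately. Since the pushforward $\phi^{-1}_*\mu_{x_0}^{\sigma,\varepsilon}$ is by construction supported on the exact preimage $\mathbf{B}_{\sigma,\varepsilon}(\mathbf{0})=\phi^{-1}(B_{\sigma,\varepsilon}(x_0))$, the difference of the two integrals equals $-\int_{\mathbf{B}\setminus\tilde B}P\,d(\phi^{-1}_*\mu_{x_0}^{\sigma,\varepsilon})$ (the other side of the symmetric difference carries no mass). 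Using the Toponogov-type expansion $\varepsilon(\alpha)=\sqrt{\varepsilon^2-\alpha_1^2}+O(\varepsilon^3)$ recalled just above, the ambient Lebesgue volume of this shell is $O(\varepsilon^{m+2}\sigma^k)$; dividing by the normalizing constant $Z=\mu(B_{\sigma,\varepsilon}(x_0))\asymp\varepsilon^m\sigma^k$ shows that $\phi^{-1}_*\mu_{x_0}^{\sigma,\varepsilon}$ assigns only mass $O(\varepsilon^2)=O(\delta^2)$ to the shell.

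Decompose $P=P_1+P_{\geq 2}$ into its linear and higher-degree parts; by assumption $P$ has no constant term, so these two pieces account for all of $P$. Since $|P_{\geq 2}(\alpha,\beta)|=O(\delta^2)$ uniformly on the support, its integral over the shell is directly bounded by $\|P_{\geq 2}\|_\infty$ times the shell mass, giving $O(\delta^4)$.

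For the linear piece $P_1=a\alpha_1+\sum_{i\geq 2}a_i\hat\alpha_i+\sum_j b_j\beta_j$ I would argue by symmetry. The $\beta_j$ contribution vanishes exactly, because the constraint $|\beta|\leq\sigma$ is identical in $\mathbf{B}$ and $\tilde B$, so by Fubini the integral reduces to $\int_{B_\sigma^k}\beta_j\,d\beta=0$. For the $\alpha$-coordinates I would expand the squared geodesic distance through the eikonal equation $g^{ij}\partial_id^2\partial_jd^2=4d^2$ combined with the Fermi metric expansion from \cref{test-measures-in-fermi-coordinates} (which carries no $\alpha_1$-dependence at second order), obtaining $d_M(\phi(\alpha,0),x_0)^2=|\alpha|^2+F_4(\alpha)+O(|\alpha|^5)$ for a specific homogeneous degree-$4$ polynomial $F_4$. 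Substituting $\hat\alpha=\sqrt{\varepsilon^2-\alpha_1^2}\hat u$, the shell thickness $r(\alpha_1,\hat u)=\varepsilon(\alpha)-\sqrt{\varepsilon^2-\alpha_1^2}$ splits into pieces that are either even in $\alpha_1$ and of even total degree in $\hat u$, or odd in $\alpha_1$ and of odd degree in $\hat u$. Integrating $\alpha_1\cdot r$ or $\hat\alpha_i\cdot r$ first over $\hat u\in S^{m-2}$ and then over $\alpha_1\in(-\varepsilon,\varepsilon)$ produces a cross-parity cancellation that annihilates the otherwise dominant $O(\delta^3)$ contribution and leaves only $O(\delta^4)$. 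The density factor $|\det D\phi|/Z=(1+O(\delta))/Z$ adds only a further $O(\delta)\cdot O(\delta)\cdot O(\delta^2)=O(\delta^4)$ correction, absorbed into the same remainder.

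The main obstacle is precisely this parity cancellation: a naive volume bound yields only $O(\delta^3)$ for a linear monomial in $P$, so one has to exploit the joint $(\alpha_1\mapsto-\alpha_1,\;\hat u\mapsto-\hat u)$ parity structure of $F_4$ inherited from the Fermi-coordinate construction (parallel transport of the orthonormal frame along $\gamma$ together with the normality of the perpendicular exponentials). Once these cancellations for $P_1$ are secured and combined with the trivial bound for $P_{\geq 2}$, the claimed $O(\delta^4)$ estimate follows.
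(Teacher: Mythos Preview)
Your approach is the same as the paper's at the top level---reduce to the integral over the shell between $\mathbf{B}_{\sigma,\varepsilon}(\mathbf{0})$ and $\tilde{B}_{\sigma,\varepsilon}(\mathbf{0})$---but you go considerably further. The paper's proof simply invokes ``$P$ has no constant term'' together with $r(\alpha)=O(\varepsilon^3)$ and declares the shell contribution to be $O(\delta^4)$. Read literally, that combination yields only $O(\delta^3)$: the shell carries $\phi^{-1}_*\mu_{x_0}^{\sigma,\varepsilon}$-mass of order $\varepsilon^2$ (radial width $O(\varepsilon^3)$ in $\hat\alpha$, surface factor $O(\varepsilon^{m-1})$, $\beta$-volume $O(\sigma^k)$, all normalised by $Z\asymp\varepsilon^m\sigma^k$), and $|P|=O(\delta)$. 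Your decomposition $P=P_1+P_{\geq 2}$ and the separate parity treatment of $P_1$ are exactly what is needed to close this gap, so your argument is in fact more complete than the paper's.

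The parity argument is correct in substance. The key input---implicit in the proof of \cref{test-measures-in-fermi-coordinates}---is that $\partial_{\alpha_1}\partial_{\alpha_i}g_{jl}(\mathbf{0})=0$ for all $i,j,l$, so the quadratic part of the Fermi metric involves only $\hat\alpha$. Feeding this through the eikonal relation forces $d_M(x_0,\psi(\alpha))^2=|\alpha|^2+F_4(\alpha)+O(|\alpha|^5)$ with every monomial of $F_4$ having either $(\alpha_1\text{-degree},\hat\alpha\text{-degree})\in\{(0,4),(2,2)\}$ or $(1,3)$; this is precisely the (even,\,even)/(odd,\,odd) dichotomy you state. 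The cancellations for $\int_{\text{shell}}\alpha_1\,d\mu$ and $\int_{\text{shell}}\hat\alpha_j\,d\mu$ then go through as you describe: the (even,\,even) piece is killed by the appropriate sign flip ($\alpha_1\mapsto-\alpha_1$ for the first integral, $\hat u\mapsto-\hat u$ for the second), and the (odd,\,odd) piece is killed by the other reflection. The residual $O(\delta)$ density correction contributes only at order $O(\delta)\cdot O(\delta)\cdot O(\delta^2)=O(\delta^4)$, as you note. Your $\beta_j$ case via Fubini is also fine, since both domains share the constraint $\|\beta\|\leq\sigma$ exactly.
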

\begin{proof}
We split the domain of integral on the right so that one part matches the domain on the left and the integral of the other part is $O(\delta^4)$:
$$
\begin{aligned}
&\int_{\mathbf{B}_{\sigma,\varepsilon}(\mathbf{0})} P(\alpha,\beta) d (\phi^{-1}_* \mu_{x_0}^{\sigma,\varepsilon})(\alpha,\beta) \\ 
&= \int_{\substack{|\alpha_1|\leq \varepsilon, \|\hat{\alpha}\| \leq \varepsilon(\alpha),\\ \|\beta\| \leq \sigma}} P(\alpha,\beta) d (\phi^{-1}_* \mu_{x_0}^{\sigma,\varepsilon})(\alpha,\beta) \\
&= \int_{\substack{|\alpha_1|\leq \varepsilon, \|\hat{\alpha}\|\leq \sqrt{\varepsilon^2 - \alpha_1^2},\\ \|\beta\|\leq \sigma}} P(\alpha,\beta) d (\phi^{-1}_* \mu_{x_0}^{\sigma,\varepsilon})(\alpha,\beta) \\
& \qquad + O(\int_{|\alpha_1|\leq \varepsilon, -|r(\alpha)| \leq \|\hat{\alpha}\| - \sqrt{\varepsilon^2-\alpha_1^2} \leq |r(\alpha)|} P(\alpha,\beta)) d (\phi^{-1}_* \mu_{x_0}^{\sigma,\varepsilon})(\alpha,\beta) \\
&= \int_{\tilde{B}_{\sigma,\varepsilon}(\mathbf{0})} P(\alpha,\beta) d (\phi^{-1}_* \mu_{x_0}^{\sigma,\varepsilon})(\alpha,\beta) + O(\delta^4),
\end{aligned}
$$
on the last line we using that $P(\alpha,\beta)$ has no constant term and $r(\alpha) = O(\varepsilon^3) = O(\delta^3)$.
\end{proof}

\section{Curves and surfaces}
\label{section:computed-examples}
We establish explicit formulas for the coarse extrinsic curvature defined by \eqref{coarse-extrinsic-curvature-def} in four practically relevant cases: a circle, a planar curve, a space curve, and a surface.
We begin by presenting the common setup shared among all these cases.

\subsection{The circle example}
\label{section-circle-example}
Our motivating example is the circle $S^1_R$ with a fixed radius $R>0$, which avoids technicalities arising from varying radius in the osculating circle, an issue that will be addressed in \Cref{planar-curve} in the case of planar curves.

\begin{notation}
Denote the polar coordinates
\begin{equation}
\label{circle-phi}
\phi(\alpha,\beta) := \begin{pmatrix} (R-\beta) \cos (\alpha/R)\\  (R-\beta) \sin (\alpha/R)) \end{pmatrix},
\end{equation}
where $\alpha \in (-\pi R,\pi R)$ parametrizes arc-length distance from the point $(R,0)$ along the circle and $\beta \in (-\sigma, \sigma)$ parametrizes the direction normal to the circle.

Denote $x_0 := \phi(0,0) = (R,0)$ and for every $\delta >0$ denote $y := \begin{pmatrix}
R\cos(\delta/R)\\  R\sin(\delta/R))\end{pmatrix}$.
\end{notation}

\begin{lemma}
The test measures in polar coordinates take the form
$$
\begin{aligned}
(\phi_*^{-1} \mu_{y}^{\sigma, \varepsilon})(d\alpha,d\beta) &= \f{1}{4\sigma \varepsilon}\mathbbm{1}_{(\delta-\varepsilon, \delta+\varepsilon)\times (-\sigma,\sigma)}(\alpha, \beta)\left(1-\f{\beta}{R}\right) d\alpha d\beta.
\end{aligned}
$$
\end{lemma}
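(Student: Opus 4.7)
The plan is to carry out a direct change-of-variables computation, since $\phi$ is a smooth diffeomorphism from $(-\pi R, \pi R) \times (-\sigma_0, \sigma_0)$ onto its image (for $\sigma_0 < R$). Three ingredients are needed: the Jacobian of $\phi$, the identification of $\phi^{-1}(B_{\sigma,\varepsilon}(y))$ as a product set, and the normalization constant. None of these is deep, and the proof reduces to elementary computations in polar coordinates.

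First I would compute the Jacobian. Differentiating \eqref{circle-phi} gives
$$
\partial_{\alpha}\phi(\alpha,\beta) = \tfrac{R-\beta}{R}\bigl(-\sin(\alpha/R),\cos(\alpha/R)\bigr), \qquad
\partial_{\beta}\phi(\alpha,\beta) = \bigl(-\cos(\alpha/R),-\sin(\alpha/R)\bigr),
$$
so the Jacobian determinant has absolute value $|\det D\phi(\alpha,\beta)| = 1 - \beta/R$, valid for $|\beta|<R$. This is exactly the codimension-1 case of \cref{volume-disintegration} applied to $S^1_R \subset \R^2$, where the single mean curvature component equals $1/R$.

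Next I would identify the support. The orthogonal projection $\pi : M_{\sigma_0} \to S^1_R$ maps $\phi(\alpha,\beta)$ to $\phi(\alpha,0)$, the point on the circle at arc length $\alpha$ from $x_0$. Since arc length along $S^1_R$ agrees with the intrinsic metric for small distances, $d_M(\pi(\phi(\alpha,\beta)),y) = |\alpha - \delta|$ whenever $\varepsilon$ is less than, say, $\pi R$. Consequently, for $\sigma < \sigma_0$,
$$
\phi^{-1}\bigl(B_{\sigma,\varepsilon}(y)\bigr) = (\delta-\varepsilon,\delta+\varepsilon)\times(-\sigma,\sigma).
$$

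Finally, combining these with the change-of-variables formula yields
$$
\mu\bigl(B_{\sigma,\varepsilon}(y)\bigr) = \int_{\delta-\varepsilon}^{\delta+\varepsilon}\!\!\int_{-\sigma}^{\sigma} \Bigl(1-\tfrac{\beta}{R}\Bigr)\,d\beta\,d\alpha = 2\varepsilon \cdot 2\sigma = 4\sigma\varepsilon,
$$
the odd power of $\beta$ integrating to zero. Dividing the pulled-back Lebesgue density by this normalizer gives the stated formula.

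There is essentially no obstacle here; the only point requiring minor care is justifying the identification of $\pi \circ \phi$ and the use of arc length as the intrinsic distance, both of which follow from the standing assumption $\sigma \vee \varepsilon \leq \delta/4$ with $\delta$ small enough relative to $R$. The lemma then serves as the explicit model case that motivates the Fermi-coordinate expansion of \cref{test-measures-in-fermi-coordinates} in later sections.
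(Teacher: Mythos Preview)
Your proposal is correct and follows essentially the same approach as the paper: compute the Jacobian of $\phi$ (the paper phrases this geometrically as the polar volume element $(R-\beta)\,d\beta\,\tfrac{d\alpha}{R}$, but it is the same calculation), identify the support as a product rectangle, and observe that the odd term in $\beta$ integrates to zero so the normalizer is $4\sigma\varepsilon$. The paper additionally verifies consistency with \cref{ambient-volume-disintegration} by computing the mean curvature of the concentric circle of radius $R-\beta$ as $\tfrac{1}{R-\beta}$ and checking $e^{-\int_0^\beta \frac{1}{R-\beta'}\,d\beta'} = 1-\beta/R$; you allude to this but do not spell it out, which is fine since it is not needed for the proof itself.
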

\begin{proof}
    At any $(\alpha, \beta)$, the radial coordinate is $R-\beta$, the radial length element is $d\beta$ and the angular element is $\f{d\alpha}{R}$, giving the volume element $(R-\beta) d\beta \frac{d\alpha}{R} = (1-\frac{\beta}{R})d\alpha d\beta$, with $\frac{1}{4\sigma \varepsilon}$ as the probability normalization factor for the support $(\delta-\varepsilon, \delta+\varepsilon)\times (-\sigma,\sigma)$.

    This is consistent with the formula of \cref{ambient-volume-disintegration}, as the mean curvature at $(\alpha,\beta)$ is $\f{1}{R-\beta}$, which gives the density
    $$
    e^{-\int_0^1 \f{\beta}{R-s\beta} ds} = e^{\log (R-\beta) - \log R} = 1-\f{\beta}{R}
    $$
    on $(\delta-\varepsilon, \delta+\varepsilon)\times (-\sigma,\sigma)$.
\end{proof}

The transport map of \cref{proposed-transport-map} boils down to
\begin{equation}
\label{transport-map-circle}
T(\phi(\alpha,\beta)) = \phi(\delta-\alpha, \beta) = \begin{pmatrix}
(R-\beta) \cos((\delta-\alpha)/R) \\ (R-\beta) \sin((\delta-\alpha)/R)\end{pmatrix},
\end{equation}
and note that $y = Tx_0 = T(\phi(0,0))$. See also \cref{planar-curve-fig} below.

\begin{remark}
In this case the transport map $T$ is precise in the sense that $T_* \mu_{x_0}^{\sigma,\varepsilon} = \mu_y^{\sigma, \varepsilon}$. Indeed, for any $f : \R^2 \rightarrow \R$ Borel measurable,
$$
\begin{aligned}
    \int f(z) d(T_* \mu_{x_0}^{\sigma,\varepsilon})(z) &= \int f(Tz) d\mu_{x_0}^{\sigma,\varepsilon}(z)\\
    &= \int f(T(\phi(\alpha,\beta))) d(\phi_*^{-1} \mu_{x_0}^{\sigma,\varepsilon})(d\alpha,d\beta) \\
    &= \f{1}{4\sigma \varepsilon} \int f(T(\phi(\alpha,\beta))) \mathbbm{1}_{(-\varepsilon, \varepsilon)\times (-\sigma,\sigma)}(\alpha, \beta) \left(1-\f{\beta}{R}\right)d\alpha d\beta \\
    &= \f{1}{4\sigma \varepsilon} \int f(\phi(\delta-\alpha,\beta)) \mathbbm{1}_{(-\varepsilon, \varepsilon)\times (-\sigma,\sigma)}(\alpha, \beta) \left(1-\f{\beta}{R}\right)d\alpha d\beta \\
    &= \f{1}{4\sigma \varepsilon} \int f(\phi(\alpha,\beta)) \mathbbm{1}_{(\delta-\varepsilon, \delta+\varepsilon)\times (-\sigma,\sigma)}(\alpha, \beta)\left(1-\f{\beta}{R}\right) d\alpha d\beta\\
    &= \int f(z) d\mu_y^{\sigma,\varepsilon}(z).
\end{aligned}
$$
\end{remark}

\begin{proposition}
For all $\delta, \varepsilon, \sigma >0$ sufficiently small with $\sigma \vee \varepsilon \leq \f{\delta}{2}$, it holds that
$$
\begin{aligned}
W_1(\mu_{x_0}^{\sigma,\varepsilon}, \mu_{y}^{\sigma,\varepsilon})&=2R^2 \sin\left(\f{\delta}{2R}\right)\f{1}{\varepsilon}\sin\left(\f{\varepsilon}{R}\right)\left(1+\f{\sigma^2}{3R^2}\right) \\
&= \|x_0-y\| \f{R}{\varepsilon}\sin\left(\f{\varepsilon}{R}\right) \left(1+\f{\sigma^2}{3R^2}\right).
\end{aligned}
$$
\end{proposition}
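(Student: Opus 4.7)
The plan is to show this is an equality (not just asymptotic), by combining a direct transport upper bound with a matching Kantorovich--Rubinstein lower bound coming from a single linear test function.

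First, I would use the remark immediately preceding the statement, which shows that the simplified circle transport map \eqref{transport-map-circle}, $T(\phi(\alpha,\beta)) = \phi(\delta-\alpha,\beta)$, is \emph{exact}: $T_\ast \mu_{x_0}^{\sigma,\varepsilon} = \mu_y^{\sigma,\varepsilon}$. Therefore
\[
W_1(\mu_{x_0}^{\sigma,\varepsilon}, \mu_y^{\sigma,\varepsilon}) \le \int \|Tz - z\|\, d\mu_{x_0}^{\sigma,\varepsilon}(z).
\]
The second step is to compute the pointwise displacement in closed form. Using sum-to-product identities on \eqref{circle-phi}, one finds
\[
Tz - z = 2(R-\beta)\sin\!\Bigl(\tfrac{\delta-2\alpha}{2R}\Bigr)\bigl(-\sin(\delta/(2R)),\,\cos(\delta/(2R))\bigr),
\]
so $\|Tz - z\| = 2(R-\beta)\sin((\delta-2\alpha)/(2R))$ (the sine is nonnegative thanks to $\varepsilon\le \delta/2$).

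The crucial observation, which gives the matching lower bound, is that every displacement vector $Tz-z$ is parallel to the \emph{same} unit vector $v := (-\sin(\delta/(2R)), \cos(\delta/(2R)))$. So the linear function $f(z) := \langle z, v\rangle$, which is $1$-Lipschitz, satisfies $f(Tz) - f(z) = \|Tz-z\|$ everywhere on $\mathrm{supp}\,\mu_{x_0}^{\sigma,\varepsilon}$. Kantorovich--Rubinstein duality then gives
\[
W_1(\mu_{x_0}^{\sigma,\varepsilon},\mu_y^{\sigma,\varepsilon}) \ge \int (f(Tz)-f(z))\, d\mu_{x_0}^{\sigma,\varepsilon}(z) = \int \|Tz-z\|\, d\mu_{x_0}^{\sigma,\varepsilon}(z),
\]
collapsing the estimate to an exact equality.

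Finally, I would evaluate the integral explicitly using the polar-coordinate density $\frac{1}{4\sigma\varepsilon}(1-\beta/R)\,\mathbbm{1}_{(-\varepsilon,\varepsilon)\times(-\sigma,\sigma)}$. The integrand factorises, and the $\beta$-integral reduces to $\int_{-\sigma}^{\sigma}\frac{2(R-\beta)^2}{R}\,d\beta = 4R\sigma\bigl(1+\tfrac{\sigma^2}{3R^2}\bigr)$, while the $\alpha$-integral, using the identity $\cos(A-B)-\cos(A+B)=2\sin A \sin B$, evaluates to $2R\sin(\delta/(2R))\sin(\varepsilon/R)$. Multiplying these with the normalisation $\tfrac{1}{4\sigma\varepsilon}$ yields the claimed formula, and the alternative form follows from the chord identity $\|x_0-y\|=2R\sin(\delta/(2R))$. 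There is no real obstacle here, just calculation; the only nontrivial step is spotting that the single vector $v$ works for every transported point, which is a feature unique to the constant-radius (circle) case and explains why the estimate is exact rather than merely asymptotic.
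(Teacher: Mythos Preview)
Your proof is correct and follows essentially the same approach as the paper: the same exact transport map $T$, the same linear test function (your $v$ coincides with the paper's $(y-x_0)/\|y-x_0\|$, as the sum-to-product identities show), and the same factorised integral. Your route to the lower bound is arguably cleaner, since you read off the common direction $v$ directly from the displacement formula rather than first positing $(y-x_0)/\|y-x_0\|$ and then verifying $f(Tz)-f(z)=\|Tz-z\|$ by a separate trigonometric computation.
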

\begin{proof} 
For every point $z=\phi(\alpha,\beta)$, 
$$
\|Tz-z\| = 2 (R-\beta) \sin \left(\frac{\delta -2\alpha}{2R}\right),
$$
which is the Euclidean distance of two points on the circle at angle $\frac{\delta-2\alpha}{R}$ apart.
Integrating with respect to the test measure yields
$$
\begin{aligned}
W_1(\mu_{x_0}^{\sigma,\varepsilon}, \mu_y^{\sigma,\varepsilon}) &\leq \int \|Tz-z\| d\mu_{x_0}^{\sigma,\varepsilon}(z)\\
&= \frac{1}{4\sigma\varepsilon} \int_{-\sigma}^\sigma d\beta \int_{-\varepsilon}^\varepsilon d\alpha \left(1-\f{\beta}{R}\right) 2 (R-\beta) \sin \left(\frac{\delta -2\alpha}{2R}\right) \\
&= \frac{1}{4\sigma\varepsilon} \int_{-\sigma}^\sigma d\beta \left(1-\f{\beta}{R}\right) 2 (R-\beta) \\
& \qquad \times \int_{-\varepsilon}^\varepsilon d\alpha \left(\sin\left(\f{\delta}{2R}\right) \cos\left(\f{\alpha}{R}\right) - \sin\left(\f{\alpha}{R}\right)\cos\left(\f{\delta}{2R}\right)\right) \\
&= 2R^2 \sin\left(\f{\delta}{2R}\right)\f{1}{\varepsilon}\sin\left(\frac{\varepsilon}{R}\right)\left(1+\f{\sigma^2}{3R^2}\right).
\end{aligned}
$$
For the lower bound, we test against the 1-Lipschitz function
\begin{equation}
\label{lower-bound-test}
f(z) := \< z-x_0, \f{y-x_0}{\|y-x_0\|} \>.
\end{equation}
We have
$$
\begin{aligned}
y-x_0 = \phi(\delta,0)-\phi(0,0) &= \begin{pmatrix} R\cos(\delta/R) \\ R\sin(\delta/R) \end{pmatrix} -\begin{pmatrix} R \\ 0 \end{pmatrix} = \begin{pmatrix} R(\cos(\delta/R)-1)\\ R\sin (\delta/R)\end{pmatrix},
\end{aligned}
$$
and so $\|y-x_0\| = R\sqrt{2(1-\cos(\delta/R)} = 2R \sin(\delta/(2R))$, giving
\begin{equation}
\label{circle-projection-vector}
\f{y-x_0}{\|y-x_0\|} = \f{1}{2 \sin (\delta/(2R))} \begin{pmatrix} \cos(\delta/ R)-1 \\ \sin(\delta/R)\end{pmatrix}.
\end{equation}
Then we compute using \eqref{circle-phi}, \eqref{transport-map-circle} and \eqref{circle-projection-vector}:
$$
\begin{aligned}
    f(Tz)-f(z) &=  \< \phi(\delta-\alpha,\beta) - \phi(\alpha,\beta), \f{y-x_0}{\|y-x_0\|} \> \\
    &= \f{(R-\beta)}{2 \sin(\delta/(2R))} \begin{pmatrix}\cos((\delta-\alpha)/R) - \cos(\alpha/R) \\ \sin((\delta-\alpha)/R) - \sin(\alpha/R) \end{pmatrix} \cdot \begin{pmatrix} \cos(\delta/R)-1\\ \sin (\delta/R)\end{pmatrix} \\
    &= \f{R-\beta}{\sin(\delta/(2R))} (\cos(\alpha/R) - \cos((\delta-\alpha)/R) )\\
    &= 2(R-\beta) \sin((\delta-2\alpha)/(2R)) = \|Tz-z\|
\end{aligned}
$$
by trigonometric identities.
Therefore 
$$
\begin{aligned}
W_1(\mu_{x_0}^{\sigma,\varepsilon}, \mu_y^{\sigma,\varepsilon}) &\geq \int f(z)(d\mu^{\sigma,\varepsilon}_{y}(z)-d\mu^{\sigma,\varepsilon}_{x_0}(z)) \\
&= \int (f(Tz) -f(z)) d\mu^{\sigma,\varepsilon}_{x_0}(z) \\
&= \int \|Tz-z\| d\mu^{\sigma,\varepsilon}_{x_0}(z),
\end{aligned}
$$
which shows the lower bound agrees exactly with the upper bound. 
\end{proof}

\subsection{Planar curve}
\label{planar-curve}
Let $\gamma: (-\delta_0,\delta_0) \rightarrow \R^2$ be a smooth unit speed curve. As before, let $x_0:=\gamma(0)$, $y := \gamma(\delta)$ where $\delta\in (-\delta_0, \delta_0)$.

The normal vector field along $\gamma$ is given by $\mathbf{n}(\alpha) := \f{\ddot{\gamma}(\alpha)}{\|\ddot{\gamma}(\alpha) \|}$, the radius of the osculating circle is $R(\alpha) := \f{1}{\|\ddot{\gamma}(\alpha)\|}$ and we have the relationships
\begin{equation}
\label{derivatives-list}
\begin{aligned}
\ddot{\gamma}(\alpha) &= \frac{\mathbf{n}(\alpha)}{R(\alpha)}, \quad \dddot{\gamma}(\alpha)=-\f{1}{R(\alpha)^2} \dot{\gamma}(\alpha)- \f{\dot{R}(\alpha)}{R(\alpha)^2}\mathbf{n}(\alpha),\\
\dot{\mathbf{n}}(\alpha) &= -\f{\dot{\gamma}(\alpha)}{R(\alpha)}, \quad \ddot{\mathbf{n}}(\alpha) = \f{\dot{R}(\alpha)}{R(\alpha)^2}\dot{\gamma}(\alpha) - \f{1}{R(\alpha)^2} \mathbf{n}(\alpha).
\end{aligned}
\end{equation}
Let $\phi: (-\delta_0,\delta_0) \times (-\sigma_0,\sigma_0) \rightarrow \R^2$ be given as follows:
$$
\phi(\alpha,\beta) := \gamma(\alpha)+\beta \mathbf{n}(\alpha).
$$
This is the Fermi chart along $\gamma$. While we have the general Fermi coordinate representation in terms of the expansion in \cref{test-measures-in-fermi-coordinates}, in this case we arrive at a precise form:
\begin{lemma}
\label{test-measures-planar}
The test measures at $y = \gamma(\delta)$ are
\begin{equation}
\label{target-measure-in-coordinates-planar}
\begin{aligned}
(\phi_*^{-1} \mu_{y}^{\sigma, \varepsilon})(d\alpha,d\beta) &= \frac{1}{4\sigma\varepsilon}\mathbbm{1}_{(\delta-\varepsilon, \delta+\varepsilon)\times (-\sigma,\sigma)}(\alpha, \beta)\left(1-\f{\beta}{R(\alpha)} \right) d\alpha d\beta.
\end{aligned}
\end{equation}
\end{lemma}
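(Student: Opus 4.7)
The plan is to reduce this to a direct Jacobian computation via the Fermi chart $\phi$, which here is simple enough to avoid invoking the full machinery of \cref{ambient-volume-disintegration}, though that would also yield the formula.

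First I would compute $D\phi$ explicitly. Using the Frenet-type identities \eqref{derivatives-list}, in particular $\dot{\mathbf{n}}(\alpha) = -\dot{\gamma}(\alpha)/R(\alpha)$, the partial derivatives are
\[
\partial_{\alpha}\phi(\alpha,\beta) = \dot{\gamma}(\alpha) + \beta\,\dot{\mathbf{n}}(\alpha) = \left(1 - \tfrac{\beta}{R(\alpha)}\right)\dot{\gamma}(\alpha), \qquad \partial_{\beta}\phi(\alpha,\beta) = \mathbf{n}(\alpha).
\]
Since $\dot{\gamma}(\alpha)$ and $\mathbf{n}(\alpha)$ form an orthonormal basis of $\mathbb{R}^2$, and $\sigma_0$ is smaller than the reach (in particular $\sigma_0 < \inf R(\alpha)$ over the relevant range), the factor $1-\beta/R(\alpha)$ is strictly positive on the chart domain, and
\[
|\det D\phi(\alpha,\beta)| = 1 - \tfrac{\beta}{R(\alpha)}.
\]
This agrees with the general formula \eqref{flow-determinant} after noting that the mean curvature of the leaf $\phi(\cdot,\beta)$ in direction $\mathbf{n}$ equals $1/(R(\alpha)-\beta)$, and $\int_0^{\beta} (R(\alpha)-\beta')^{-1}d\beta' = \log(R(\alpha)/(R(\alpha)-\beta))$.

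Next I would identify the preimage of the support $B_{\sigma,\varepsilon}(y)$ in Fermi coordinates. Because $\phi$ provides a valid tubular parametrization, the projection satisfies $\pi(\phi(\alpha,\beta)) = \gamma(\alpha)$ and $\|\phi(\alpha,\beta)-\pi(\phi(\alpha,\beta))\| = |\beta|$. Since $\gamma$ is unit speed, for $\varepsilon$ less than the injectivity radius one has $d_M(\gamma(\alpha),\gamma(\delta)) = |\alpha-\delta|$. Therefore $\phi(\alpha,\beta) \in B_{\sigma,\varepsilon}(y)$ if and only if $(\alpha,\beta) \in (\delta-\varepsilon,\delta+\varepsilon)\times(-\sigma,\sigma)$.

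Finally, applying the standard change-of-variables formula to the defining ratio in \cref{def-test-measures} yields
\[
(\phi_*^{-1}\mu_y^{\sigma,\varepsilon})(d\alpha,d\beta) = \frac{\mathbbm{1}_{(\delta-\varepsilon,\delta+\varepsilon)\times(-\sigma,\sigma)}(\alpha,\beta)\,(1-\beta/R(\alpha))}{\mu(B_{\sigma,\varepsilon}(y))}\,d\alpha\,d\beta.
\]
The normalization constant is
\[
\mu(B_{\sigma,\varepsilon}(y)) = \int_{\delta-\varepsilon}^{\delta+\varepsilon}\int_{-\sigma}^{\sigma}\left(1-\tfrac{\beta}{R(\alpha)}\right)d\beta\,d\alpha = \int_{\delta-\varepsilon}^{\delta+\varepsilon} 2\sigma\,d\alpha = 4\sigma\varepsilon,
\]
where the $\beta/R(\alpha)$ term drops out by odd symmetry in $\beta$. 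There is no real obstacle here; the only subtlety worth flagging is ensuring that $\sigma_0$ lies below the reach so that $1-\beta/R(\alpha)>0$ and $\phi$ is a genuine diffeomorphism onto the tubular neighbourhood, which is already built into \cref{notation-submanifold}.
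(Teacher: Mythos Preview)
Your proof is correct. The approach differs slightly from the paper's: you compute the Jacobian of $\phi$ directly from the orthonormality of $(\dot\gamma(\alpha),\mathbf{n}(\alpha))$ and the Frenet identity $\dot{\mathbf{n}}=-\dot\gamma/R$, obtaining $|\det D\phi|=1-\beta/R(\alpha)$ in one line. The paper instead goes through the general disintegration machinery of \cref{ambient-volume-disintegration}, explicitly computing the mean curvature $H(\phi(\alpha,\beta))=1/(R(\alpha)-\beta)$ of each leaf and then evaluating the exponential integral $e^{-\int_0^\beta H\,d\beta'}=1-\beta/R(\alpha)$. Your route is more elementary and self-contained for this low-dimensional case; the paper's route has the advantage of exhibiting this lemma as a concrete sanity check of the general formula \eqref{skew-decomposition-hypersurface}, which is what will actually be used in the higher-dimensional sections. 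You also spell out the identification of the support $\phi^{-1}(B_{\sigma,\varepsilon}(y))$ and the positivity of $1-\beta/R(\alpha)$ more carefully than the paper does, which is a nice touch.
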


\begin{proof}
To evaluate $H(\phi(\alpha,\beta))$ in applying \cref{ambient-volume-disintegration}, normalize the vector field tangent to the curve $\alpha \mapsto \phi(\alpha,\beta)$ and compute the second derivative in $\R^2$ as
$$
\begin{aligned}
\f{\partial_\alpha}{\|\partial_\alpha \phi(\alpha,\beta) \|}\left( \f{\partial_\alpha \phi(\alpha,\beta)}{\|\partial_\alpha \phi(\alpha,\beta)\|} \right) &= \f{\partial_\alpha^2 \phi(\alpha,\beta)}{\|\partial_\alpha \phi(\alpha,\beta)\|^2} - \f{\< \partial_\alpha^2 \phi(\alpha,\beta), \partial_\alpha \phi(\alpha,\beta)\>}{ \|\partial_\alpha \phi(\alpha,\beta)\|^3 } \partial_\alpha \phi(\alpha,\beta).
\end{aligned}
$$
The second term is tangential to the curve, so may be ignored for the computation of $H$. Moreover, 
$$
\begin{aligned}
\partial_\alpha^2 \phi(\alpha,\beta) &= \ddot{\gamma}(\alpha) + \beta \ddot{\mathbf{n}}(\alpha) = \beta \f{\dot{R}(\alpha)}{R(\alpha)^2} \dot{\gamma}(\alpha) + \f{1}{R(\alpha)} \left(1 -\f{\beta}{R(\alpha)} \right)\mathbf{n}(\alpha), \\
\|\partial_\alpha \phi(\alpha,\beta)\|^{-2} &= \|\dot{\gamma} + \beta \dot{\mathbf{n}}(\alpha)\|^{-2} = \left(1-\f{\beta}{R(\alpha)}\right)^{-2}.
\end{aligned}
$$
Note that $\mathbf{n}(\alpha)$ is normal to $\alpha \mapsto \phi(\alpha,\beta)$ for every $\beta$ since
$$
\<\mathbf{n}(\alpha), \partial_\alpha \phi(\alpha,\beta)\> =\<\mathbf{n}(\alpha), \dot{\gamma}(\alpha) + \beta \dot{\mathbf{n}}(\alpha)\> = 0,
$$
therefore the mean curvature is
$$
\begin{aligned}
H(\phi(\alpha,\beta)) &= \< \mathbf{n}(\alpha), \f{\partial_\alpha}{\|\partial_\alpha \phi(\alpha,\beta) \|}\left( \f{\partial_\alpha \phi(\alpha,\beta)}{\|\partial_\alpha \phi(\alpha,\beta)\|} \right)\> \\
&= \f{1}{\|\partial_\alpha \phi(\alpha,\beta)\|^2} \< \mathbf{n}(\alpha), \partial_\alpha^2 \phi(\alpha,\beta)\> \\
&= \left(1-\f{\beta}{R(\alpha)}\right)^{-2} \frac{1}{R(\alpha)} \left(1-\f{\beta}{R(\alpha)}\right) = \f{1}{R(\alpha)-\beta}.
\end{aligned} 
$$
Finally,
    \begin{equation}
    e^{-\int_0^\beta H(\phi(\alpha,\beta')) d\beta'} d\alpha d\beta= e^{-\int_0^\beta \f{1}{R(\alpha)-\beta'}d\beta'} d\alpha d\beta = \left(1-\f{\beta}{R(\alpha)}\right) d\alpha d\beta
    \end{equation}
and the Lebesgue measure of the support $\f{1}{4\sigma \varepsilon}$ is the normalization factor because the $\beta$ term vanishes when integrating over $\beta \in (-\sigma,\sigma)$.
\end{proof}

In this case the proposed transport map of \cref{proposed-transport-map} reduces to
$$
T(\phi(\alpha, \beta)) = \phi\left(\delta-\alpha, \beta- \f{1}{2}\f{\dot{R}(0)}{R(0)^2}(\sigma^2-\beta^2)(\delta-2\alpha)\right).
$$
As a consequence of \cref{asymptotically-optimal-transport}, 
\begin{lemma}
\label{good-approximation-map}
    For all $\delta, \varepsilon, \sigma >0$ sufficiently small with $\sigma \vee \varepsilon \leq \f{\delta}{4}$, it holds that
    $$
    W_1(\mu_{x_0}^{\sigma,\varepsilon}, \mu_y^{\sigma,\varepsilon}) = W_1(\mu^{\sigma,\varepsilon}_{x_0}, T_* \mu^{\sigma,\varepsilon}_{x_0}) + O(\delta^4).
    $$
\end{lemma}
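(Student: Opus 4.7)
The plan is to recognize this lemma as a direct specialization of Corollary \ref{asymptotically-optimal-transport} to the planar curve setup, and to verify the two hypotheses of that corollary: (i) the map $T$ of \cref{proposed-transport-map}, restricted to the planar case, is an approximate transport map from $\mu_{x_0}^{\sigma,\varepsilon}$ to $\mu_y^{\sigma,\varepsilon}$ of degree $k=3$; and (ii) $\mathrm{diam\,supp\,}\mu_y^{\sigma,\varepsilon} = O(\delta)$, i.e.\ $\ell = 1$. Combining the two through the corollary yields the claimed error bound $O(\delta^{k+\ell}) = O(\delta^4)$.

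For (ii), the support of $\mu_y^{\sigma,\varepsilon}$ is contained in $B_{\sigma,\varepsilon}(y)$, whose Euclidean diameter is bounded by $2\varepsilon + 2\sigma \leq \delta$ under the hypothesis $\sigma \vee \varepsilon \leq \delta/4$, so $\ell = 1$ is immediate. For (i), I would note that in the planar setting the proposed transport simplifies to
$$
T(\phi(\alpha,\beta)) = \phi\!\left(\delta-\alpha,\; \beta - \tfrac{1}{2}\tfrac{\dot R(0)}{R(0)^2}(\sigma^2-\beta^2)(\delta-2\alpha)\right),
$$
which is the $m=1$, $k=1$ instance of \cref{proposed-transport-map}, so \cref{proposed-approximate} applies and gives
$$
\frac{d(T_*\mu_{x_0}^{\sigma,\varepsilon})}{d\mu_y^{\sigma,\varepsilon}}(\phi(\alpha,\beta)) = 1 + O(\delta^3).
$$
Alternatively, since \cref{test-measures-planar} provides the exact density rather than an expansion, I could verify degree $3$ directly: push $\mu_{x_0}^{\sigma,\varepsilon}$ through $T$ using the change-of-variable (\cref{density-change-of-variable}) together with the skew-product factorization (\cref{skew-product-density}), expand the resulting Jacobian determinant and the density factor $1 - \beta/R(\alpha)$ in $\alpha,\beta$, and observe that the third-order correction term in the $\beta$-component of $T$ is precisely what cancels the second-order mismatch coming from the variation of $R(\alpha)$ about $\alpha_1 = 0$.

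With these two ingredients in hand, the conclusion follows by a single application of \cref{asymptotically-optimal-transport}. The main (and essentially only) subtlety will be bookkeeping the asymptotics under the scaling $\sigma\vee\varepsilon \leq \delta/4$: all second-order terms in $\alpha,\beta$ are $O(\delta^2)$ and cubic terms are $O(\delta^3)$, so that the density mismatch and diameter combine to $O(\delta^4)$. No genuine obstacle arises, because the heavy lifting — namely the verification that $T$ cancels second-order density discrepancies — has already been performed in \cref{proposed-approximate}, and the planar case is strictly simpler.
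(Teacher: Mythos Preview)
Your proposal is correct and matches the paper's approach exactly: the paper presents this lemma as an immediate consequence of \cref{asymptotically-optimal-transport}, with the degree-$3$ input supplied by \cref{proposed-approximate} and the $O(\delta)$ diameter bound following from $\sigma\vee\varepsilon\leq\delta/4$.
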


\begin{figure}
\centering
\begin{tikzpicture}
    \path[fill=red!30, opacity=0.7] (0.9, 0.3) -- (1.1, 0.75) -- (1.3, 1.1) -- (0, 1.7) -- (-0.2,1.3) -- (-0.4, 0.8) -- cycle;

    \path[fill=red!30, opacity=0.7] (2, 2.1) -- (2.3, 2.35) -- (2.75, 2.55) -- (2.15, 3.85) -- (1.6, 3.65) -- (1.2, 3.3) -- cycle;
     
    \draw[thick] plot[smooth, tension=0.7] coordinates {(0,0) (1,2) (2,3) (3,3.5)};
    \node[fill, circle, scale=0.5, label=right:$x_0$] at (0.45, 1){};
    \node[fill, circle, scale=0.5, label=below:$y$] at (2, 3){};

    \draw[thick] plot[smooth, tension=1] coordinates {(0.9, 0.3) (1.1, 0.75) (1.3, 1.1)};
    \draw[thick] plot[smooth, tension=1] coordinates {(-0.4, 0.8) (-0.2, 1.3) (0, 1.7)};

    \draw[thick] plot[] coordinates {(0.9, 0.3) (-0.4, 0.8)};
    \draw[thick] plot[] coordinates {(1.3, 1.1) (0, 1.7)};

    \draw[thick] plot[smooth, tension=1] coordinates {(1.2,3.3) (1.6, 3.65) (2.15, 3.85)};
    \draw[thick] plot[smooth, tension=1] coordinates {(2,2.1) (2.3, 2.35) (2.75, 2.55)};

    \draw[thick] plot[] coordinates {(1.2, 3.3) (2, 2.1)};
    \draw[thick] plot[] coordinates {(2.15, 3.85) (2.75, 2.55)};

    \node[label=below:$\gamma$] at (0,0){};

    \node[label=below right:$\color{red}{\mu_{x_0}^{\sigma,\varepsilon}}$] at (0.9, 0.5){};

    \node[label=right:$\color{red}{\mu_{y}^{\sigma,\varepsilon}}$] at (2.75, 2.55){};

    \draw[thick, blue, ->] plot[] coordinates {(0, 1.7) (1.2, 3.3)};
    \draw[thick, blue, ->] plot[] coordinates {(-0.4, 0.8) (2.15, 3.85)};
    \draw[thick, blue, ->] plot[] coordinates {(0.9, 0.3) (2.75, 2.55)};
    \draw[thick, blue, ->] plot[] coordinates {(1.3, 1.1) (2, 2.1)};

    \node[label=left:$\color{blue}{T}$] at (0.8, 2.7){};

    \draw[dashed] (0.9, 0.15) -- (-0.5, 0.65);
    \draw[thick, <->] (0.2, 0.4) -- (-0.5, 0.65);

    \node[label=left:$\sigma$] at (0.2, 0.25){};

    \draw[dashed] (-0.6, 0.8) -- (-0.2, 1.8);
    \draw[thick, <->] (-0.4, 1.3) -- (-0.2, 1.8);

    \node[label=left:$\varepsilon$] at (-0.2, 1.6){};

\end{tikzpicture}
\caption{Planar curve case: test measures in red with some transport pairs of $T$ in blue.}
\label{planar-curve-fig}
\end{figure}

For notational ease we shall from here onwards denote $R:=R(0)$ and $\dot{R}:=\dot{R}(0)$.

\planarcoarsecurvature

\begin{proof}
\cref{good-approximation-map} allows computing $W_1(\mu^{\sigma,\varepsilon}_{x_0}, T_* \mu^{\sigma,\varepsilon}_{x_0})$ instead. Throughout the proof, terms of order $\delta^4$ and higher are absorbed into $O(\delta^4)$.
For the upper bound, we compute by expansion with respect to the orthonormal basis $(\dot{\gamma}(0), \mathbf{n}(0))$ at $x_0$,
\begin{equation}
\label{fermi-planar}
\begin{aligned}
    &\phi(\alpha, \beta) = \gamma(0) + \beta \mathbf{n}(0) + \alpha (\dot{\gamma}(0) + \beta \dot{\mathbf{n}}(0)) + \f{\alpha^2}{2} (\ddot{\gamma}(0) + \beta \ddot{\mathbf{n}}(0)) + \f{\alpha^3}{6} \dddot{\gamma}(0) + O(\delta^4) \\
    &= x_0 + \left( \alpha - \f{\alpha \beta}{R} - \f{\alpha^3}{6R^2} + \f{\beta \alpha^2 \dot{R}}{2R^2} \right) \dot{\gamma}(0) + \left(\beta + \f{\alpha^2}{2R} - \f{\alpha^3 \dot{R}}{6R^2} - \f{\beta \alpha^2}{2R^2} \right) \mathbf{n}(0) + O(\delta^4)
\end{aligned}
\end{equation}
having inserted for the derivatives at $0$ using the list \eqref{derivatives-list}.
Then the distance of the transport pairs up to order 4 is
\begin{equation}
\label{planar-case-exp}
\begin{aligned}
&\|T(\phi(\alpha, \beta))-\phi(\alpha,\beta)\| = \|\phi(\delta-\alpha, \beta) - \phi(\alpha,\beta)\|\\
&=(\delta-2\alpha)\bigg\|\left(1 - \f{\beta}{R}-\f{1}{6R^2} (\delta^2-\delta\alpha+\alpha^2) +\f{\beta \dot{R}}{2R^2}\delta + O(\delta^3) \right) \dot{\gamma}(0) \\
& \hspace{2.3cm} +  \left(\f{\delta}{2R} + O(\delta^2)\right) \mathbf{n}(0)\bigg\|
\end{aligned}
\end{equation}
having used the factorizations $(\delta-\alpha)^3-\alpha^3 = (\delta-2\alpha)(\delta^2-\delta\alpha+\alpha^2)$ and $(\delta-\alpha)^2 - \alpha^2 = \delta(\delta-\alpha)$. By orthonormality of $(\dot{\gamma}(0), \mathbf{n}(0))$, we compute this norm as
\begin{equation}
\label{planar-case-pairwise}
\begin{aligned}
&(\delta-2\alpha) \left[ \left( 1 - \f{\beta}{R}-\f{1}{6R^2} (\delta^2-\delta\alpha+\alpha^2) +\f{\beta \dot{R}}{2R^2}\delta + O(\delta^3) \right)^2 + \left(\f{\delta}{2R} + O(\delta^2) \right)^2 \right]^{\f{1}{2}}\\
&= (\delta-2\alpha) \left(1-\frac{2\beta}{R} + \f{\beta^2}{R^2} - \f{1}{3R^2} (\delta^2-\delta\alpha+\alpha^2) + \f{ \dot{R}}{R^2}\beta \delta +\f{1}{4R^2}\delta^2 \right)^{\f{1}{2}} + O(\delta^4) \\
&= (\delta-2\alpha) \left( 1 -\f{\beta}{R} - \f{\delta^2}{24R^2} + \f{\delta \alpha}{6R^2} - \f{\alpha^2}{6R^2}+ \f{ \dot{R}}{2R^2} \beta\delta \right) + O(\delta^4).
\end{aligned}
\end{equation}
by the expansion $\sqrt{1+x} = 1+\f{1}{2}x-\f{1}{8}x^2+O(x^3)$ for the square root on the last line.

Moreover, expanding the volume distortion factor as
$$
\left(1-\frac{\beta}{R(\alpha)}\right) = 1- \f{\beta}{R} + \f{\dot{R}}{R^2}\alpha \beta + O(\delta^3)
$$
and multiplying the expression for $\|T(\phi(\alpha, \beta))-\phi(\alpha,\beta)\|$ by this factor, we integrate and note that only terms of even order in both $\alpha$ and $\beta$ contribute, yielding
$$
\begin{aligned}
&W_1(\mu_{x_0}^{\sigma,\varepsilon}, \mu_y^{\sigma, \varepsilon}) \\
&\leq \f{1}{4\sigma \varepsilon} \int_{-\varepsilon}^\varepsilon d\alpha \int_{-\sigma}^\sigma d\beta \left(1- \f{\beta}{R}+\f{\dot{R}}{R^2}\alpha \beta+O(\delta^3)\right) \|T(\phi(\alpha, \beta))-\phi(\alpha,\beta)\| \\
&= \f{1}{4\sigma \varepsilon} \int_{-\varepsilon}^\varepsilon d\alpha \int_{-\sigma}^\sigma d\beta \left(1- \f{\beta}{R}+\f{\dot{R}}{R^2}\alpha \beta+O(\delta^3)\right)\\
&\qquad \times (\delta-2\alpha) \left( 1 -\f{\beta}{R} - \f{\delta^2}{24R^2} + \f{\delta \alpha}{6R^2} - \f{\alpha^2}{6R^2}+ \f{ \dot{R}}{2R^2} \beta\delta \right) + O(\delta^4)\\
&=\delta \left(1 - \f{\delta^2}{24R^2} -\f{\varepsilon^2}{6R^2} + \f{\sigma^2}{3R^2} \right) + O(\delta^4) \\
&= \|y-x_0\| \left( 1-\f{\varepsilon^2}{6R^2} + \f{\sigma^2}{3R^2} \right) + O(\delta^4).
\end{aligned}
$$
To obtain the factor $\|y-x_0\|$ on the last line, we applied that 
$$
\|y-x_0\| = \delta \left(1-\f{\delta^2}{24R^2}\right)+O(\delta^4)
$$
which can be deduced by plugging in for $\alpha=\beta=0$ in the previous computation of $T(\phi(\alpha,\beta))-\phi(\alpha,\beta)$.
The $\sigma^2$ coefficient came from integrating the $\beta^2$ term of the integrand, $\f{\sigma^2}{3R^2} = \frac{1}{2\sigma}\int_{-\sigma}^\sigma (-\f{\beta}{R}) \times  (-\f{\beta}{R}) d\beta$. The terms with odd power in $\alpha$ or $\beta$ such as $\delta\alpha\beta$ vanished as they are mean zero.

We proceed with showing the lower bound, using again the 1-Lipschitz test function
$$
f(z) := \< z-x_0, \f{y-x_0}{\|y-x_0\|} \>.
$$
Express the vector between the centres of the two test measures, recalling $\gamma(0)=x_0$,
$$
\begin{aligned}
\gamma(\delta)-\gamma(0) &= T(\phi(0,0)) - \phi(0,0) \\
&= \delta \left(1- \f{\delta^2}{6R^2}\right) \dot{\gamma}(0) - \delta \left(\f{\delta}{2R} + O(\delta^2) \right) \mathbf{n}(0)  + O(\delta^4).
\end{aligned}
$$
This vector has magnitude
$$
\|\gamma(\delta)-\gamma(0)\| = \delta \left( 1-\f{\delta^2}{24R^2} \right) +O(\delta^4),
$$
and so we deduce that
$$
\begin{aligned}
\f{y-x_0}{\|y-x_0\|} &= \f{\gamma(\delta)-\gamma(0)}{\|\gamma(\delta)-\gamma(0)\|} = \left(1-\f{\delta^2}{8R^2} + O(\delta^3)\right)\dot{\gamma}(0) + \left(\f{\delta}{2R} + O(\delta^2) \right)\mathbf{n}(0).
\end{aligned}
$$
Then we compute, using the expression \eqref{planar-case-exp} for $ T(\phi(\alpha,\beta)) - \phi(\alpha,\beta)$ obtained above,
$$
\begin{aligned}
    &f(Tz)-f(z)\\
    &=  \< T(\phi(\alpha,\beta)) - \phi(\alpha,\beta), \f{y-x_0}{\|y-x_0\|} \> \\
    &= (\delta-2\alpha)\left(1 - \f{\beta}{R}-\f{1}{6R^2} (\delta^2-\delta\alpha+\alpha^2) +\f{\beta \delta \dot{R}}{2R^2}\right) \left(1-\f{\delta^2}{8R^2}+O(\delta^3)\right) \\
    & \quad + (\delta-2\alpha) \left(\f{\delta}{2R}  + O(\delta^2)\right)\left(\f{\delta}{2R} +O(\delta^2)\right) +O(\delta^4)\\
    &= (\delta -2\alpha) \left(1 - \f{\beta}{R} -\f{\delta^2}{24R^2} +\f{\delta\alpha}{6R^2}  -\f{\alpha^2}{6R^2} +\f{\beta \delta \dot{R}}{2R^2}\right) + O(\delta^4).
\end{aligned}
$$
We see that this agrees with the pairwise transport distance \eqref{planar-case-pairwise} up to $O(\delta^4)$, hence \cref{test-function-gradient} applies and the upper and lower bounds agree up to an $O(\delta^4)$ term.
\end{proof}

\subsection{Space curve}
Let $\gamma : (-\delta_0,\delta_0) \rightarrow \R^3$ be a smooth, unit speed curve with velocity $\dot{\gamma}$. Define the unit normal and binormal vector fields along $\gamma$ as
$$
\mathbf{n}(\alpha) := \f{\ddot{\gamma}(\alpha)}{\|\ddot{\gamma}(\alpha) \|}, \quad \mathbf{b}(\alpha) := \f{\dot{\gamma}(\alpha) \times \mathbf{n}(\alpha)}{\|\dot{\gamma}(\alpha) \times \mathbf{n}(\alpha)\|}.
$$
This yields the so-called Frenet-Serret frame $(\dot{\gamma}(\alpha), \mathbf{n}(\alpha), \mathbf{b}(\alpha))$ of $\R^3$ along $\gamma$. 
Writing $R(\alpha):=\f{1}{\|\ddot{\gamma}(\alpha)\|}$ for the radius of the osculating circle and $\tau(\alpha):= \|\dot{\mathbf{b}}(\alpha)\|$ for the torsion, 
the Frenet-Serret formulas give relationships between the vector fields of the frame,
\begin{equation}
\label{frenet-serret-first-derivatives}
\begin{aligned}
\ddot{\gamma}(\alpha) &= \frac{\mathbf{n}(\alpha)}{R(\alpha)}, \\
\dot{\mathbf{n}}(\alpha) &= -\f{\dot{\gamma}(\alpha)}{R(\alpha)} + \tau(\alpha)\mathbf{b}(\alpha), \\
\dot{\mathbf{b}}(\alpha) &= -\tau(\alpha) \mathbf{n}(\alpha).
\end{aligned}
\end{equation}
From these, we deduce the higher order derivatives
\begin{equation}
\label{frenet-serret-second-derivatives}
\begin{aligned}
\dddot{\gamma}(\alpha) & = -\f{1}{R(\alpha)^2} \dot{\gamma}(\alpha) - \f{\dot{R}(\alpha)}{R(\alpha)^2}\mathbf{n}(\alpha) + \f{\tau(\alpha)}{R(\alpha)} \mathbf{b}(\alpha),\\
\ddot{\mathbf{n}}(\alpha) &= \f{\dot{R}(\alpha)}{R(\alpha)^2}\dot{\gamma}(\alpha) - \left(\tau(\alpha)^2+\f{1}{R(\alpha)^2}\right) \mathbf{n}(\alpha) + \dot{\tau}(\alpha)\mathbf{b}(\alpha),\\
\ddot{\mathbf{b}}(\alpha) &= \f{\tau(\alpha)}{R(\alpha)}\dot{\gamma}(\alpha) -\dot{\tau}(\alpha)\mathbf{n}(\alpha)-\tau(\alpha)^2 \mathbf{b}(\alpha).
\end{aligned}
\end{equation}
We will employ the Frenet-Serret frame for explicit computations of distances between points in the tubular neighborhood of a space curve. Additionally, we will employ it in formulating a sufficiently accurate approximate transport map between test measures,  represented through an expansion in Fermi coordinates.

\cref{definition-fermi-coordinates} for Fermi coordinates requires a choice of a local orthonormal frame of the normal bundle along $\gamma$. We choose $(\mathbf{n}_1,\mathbf{n}_2)$ as follows:
$$
\begin{aligned}
\mathbf{n}_1(\alpha) &:= \f{\mathbf{n}(\alpha) - \alpha\tau(\alpha) \mathbf{b}(\alpha)}{\sqrt{1+\alpha^2\tau(\alpha)^2}}  = (1+O(\alpha^2))\mathbf{n}(\alpha) - (\alpha\tau(\alpha)+O(\alpha^2)) \mathbf{b}(\alpha)),\\
\mathbf{n}_2(\alpha) &:= \f{\mathbf{b}(\alpha) + \alpha\tau(\alpha) \mathbf{n}(\alpha)}{\sqrt{1+\alpha^2\tau(\alpha)^2}} = (1+O(\alpha^2))\mathbf{b}(\alpha) + (\alpha\tau(\alpha)+O(\alpha^2)) \mathbf{n}(\alpha)
\end{aligned}
$$
where $\mathbf{n}, \mathbf{b}$ come from the Frenet-Serret frame.
\begin{definition}
    Define the Fermi coordinates $\phi: (-\delta_0,\delta_0)^3 \rightarrow \R^3$, adapted to $\gamma$, by the formula:
    $$
    \begin{aligned}
    \phi(\alpha, \beta_1, \beta_2) :&= \gamma(\alpha) +\beta_1 \mathbf{n}_1(\alpha) + \beta_2 \mathbf{n}_2(\alpha)\\
    &= \gamma(\alpha) + (\beta_1 + \alpha \beta_2 \tau(\alpha)+O(\delta^3)) \mathbf{n}(\alpha) + (\beta_2 - \alpha \beta_1 \tau(\alpha)+O(\delta^3)) \mathbf{b}(\alpha).
    \end{aligned}
    $$
    Denote $R=R(0), \dot{R}=\dot{R}(0), \tau=\tau(0), \dot{\tau}=\dot{\tau}(0)$.
\end{definition}

    Consider the family of curves
    $
    \{\alpha \mapsto \phi(\alpha,\beta_1,\beta_2) : (\beta_1,\beta_2) \in B_\sigma\}.
    $
    Denote the particular unit normal vector fields $$
    \tilde{\mathbf{n}}(\alpha, \beta_1,\beta_2) := \f{1}{\sqrt{\beta_1^2+\beta_2^2}} \left(\beta_1 \mathbf{n}_1(\alpha) + \beta_2 \mathbf{n}_2(\alpha)\right).
    $$
    The mean curvature of each curve in the direction $\tilde{\mathbf{n}}$ is expressed as
    $$
    \begin{aligned}
    \<H(\phi(\alpha, \beta_1,\beta_2)), \tilde{\mathbf{n}}(\alpha,\beta_1,\beta_2)\> &= \<\tilde{\mathbf{n}}(\alpha, \beta_1,\beta_2), \f{\partial_\alpha}{\|\partial_\alpha \phi(\alpha,\beta_1,\beta_2)\|} \left( \f{\partial_\alpha \phi(\alpha,\beta_1,\beta_2)}{\|\partial_\alpha \phi(\alpha,\beta_1,\beta_2)\|} \right)\>\\
    &= \<\tilde{\mathbf{n}}(\alpha, \beta_1,\beta_2), \f{\partial_\alpha^2 \phi(\alpha,\beta_1,\beta_2)}{\|\partial_\alpha \phi(\alpha,\beta_1,\beta_2)\|^2} \>.
    \end{aligned}
    $$
    where the second equality holds because $\tilde{\mathbf{n}}$ is normal to $\partial_\alpha \phi$ by \cref{normal-to-leaf}.

\begin{remark}
\label{space-curve-test-measure}

We perform computations in terms of the Frenet-Serret frame as it can be interpreted in terms of the radius of the osculating circle and torsion of the curve.
\begin{itemize}
\item [(1)]   As a special case of \cref{test-measures-in-fermi-coordinates}, using that the mean curvature components are
    $$
    \begin{aligned}
    H^1(\phi(\mathbf{0})) &= \<\mathbf{n}(0), \ddot{\gamma}(0)\> = \<\mathbf{n}(0), \f{\mathbf{n}(0)}{R(0)}\> = \f{1}{R(0)},\\ 
    H^2(\phi(\mathbf{0})) &= \<\mathbf{b}(0), \ddot{\gamma}(0)\> = \<\mathbf{b}(0), \f{\mathbf{n}(0)}{R(0)}\> = 0,
    \end{aligned}
    $$
    the test measures at every $y = \gamma(\delta)$ in Fermi coordinates along $\gamma$ are
    $$
    \begin{aligned}
    &(\phi^{-1}_* \mu_y^{\sigma,\varepsilon})(d\alpha,d\beta_1,d\beta_2) \\
    &= \f{\mathbbm{1}_{\tilde{B}_{\sigma,\varepsilon}}(\alpha,\beta_1,\beta_2) }{\int_{\tilde{B}_{\sigma,\varepsilon}} (1+ r(\alpha,\beta_1,\beta_2)) d (\phi^{-1}_* \mu_y^{\sigma,\varepsilon})(\alpha,\beta_1,\beta_2)}  \bigg(1- \f{\beta_1}{R(0)} + r(\alpha,\beta_1,\beta_2)\bigg) d\alpha d\beta_1 d\beta_2
    \end{aligned}
    $$
    where $r(\alpha,\beta_1,\beta_2) = O(\delta^2)$ is the remainder.

    \item [(2)] The proposed transport map of \cref{proposed-transport-map} reduces, in this case, to
    $$
    \begin{aligned}
    T(\phi(\alpha,\beta_1,\beta_2)) = \phi(\delta-\alpha, \beta_1 + O(\delta^3), \beta_2 + O(\delta^3)).
    \end{aligned}
    $$
    These expressions will be used in the proof of the next theorem.
\end{itemize}
\end{remark}

\begin{figure}
    \centering
    \begin{tikzpicture}
        \path[fill=red!30, opacity=0.7] (0.03, 1.9) -- (0.75,2.95) -- (2.3,2.1) -- (1.55,1.1) -- cycle;
        \centerarc[fill=red!30, opacity=0.7](1.5,2.5)(-28:150:25pt)
        \centerarc[fill=red!30, opacity=0.7](0.8,1.5)(-210:-28:25pt)

        \path[fill=red!30, opacity=0.7] (3.4, 3.2) -- (4, 3.6) -- (4.65, 3.75) -- (4.25, 5.45) -- (3.5, 5.3) -- (2.6, 4.8) -- cycle;
        \centerarc[fill=red!30, opacity=0.7](4.4,4.6)(-75:100:25pt)
        \centerarc[fill=red!30, opacity=0.7](3,4)(-245:-63:25pt)

        \node[label=left:$\gamma$] at (0,0){};
            
        \draw[thick] plot[smooth, tension=0.6] coordinates {(0,0) (1.5,2.5) (3, 4) (6,5)};

        \draw[] (0.8,1.5) circle (25pt);

        \node[fill, circle, scale=0.5, label=left:$x_0$] at (1.05, 1.8){};

        \draw[dashed] (1.5,2.5) circle (25pt);

        \centerarc[thick](1.5,2.5)(-25:155:25pt)

        \draw (0.05, 2) -- (0.75, 2.95);
        \node[label=below right:$\color{red}{\mu_{x_0}^{\sigma,\varepsilon}}$] at (1, 0.7){};
        \draw (1.63, 1.18) -- (2.3, 2.1);

        \draw (3,4) circle (25pt);

        \draw[dashed] (4.4, 4.6) circle (25pt);

        \centerarc[thick](4.4,4.6)(-90:90:25pt)

        \node[fill, circle, scale=0.5, label=below:$y$] at (3.5, 4.2){};

        \draw[thick] plot[smooth, tension=1] coordinates {(3.4, 3.2) (4, 3.6) (4.65, 3.75)};

        \draw[thick] plot[smooth, tension=1] coordinates {(2.6, 4.8) (3.5, 5.3) (4.25, 5.45)};

        \node[label=below right:$\color{red}{\mu_{y}^{\sigma,\varepsilon}}$] at (4.65, 3.75){};

        \draw[thick, blue, ->] plot[] coordinates {(0.8, 0.62) (4.6, 3.75)};
        \draw[thick, blue, ->] plot[] coordinates {(1.4, 1.6) (3, 3.15)};
        \draw[thick, blue, ->] plot[] coordinates {(0.8, 2.4) (4.4, 5.45)};
        \draw[thick, blue, ->] plot[] coordinates {(1.4, 3.4) (3, 4.9)};

         \node[label=left:$\color{blue}{T}$] at (2, 4){};

    \draw[dashed] (-0.3, 0.6) -- (-0.3, 2.25);
    \draw[thick, <->] (-0.3, 1.4) -- (-0.3, 2.25);

    \node[label=left:$\sigma$] at (-0.25, 1.85){};

    \draw[dashed] (-0.3,2.25) -- (0.5, 3.5);

    \draw[thick, <->] (0.1, 2.875) -- (0.5, 3.5);

    \node[label=left:$\varepsilon$] at (0.35, 3.3){};
        
    \end{tikzpicture}
    \caption{Space curve case: test measures in red with some transport pairs of $T$ in blue.}
\end{figure}

\begin{theorem}
\label{space-curve}
    Let $\gamma: (-\delta_0,\delta_0) \rightarrow \R^3$ be a space curve with $x_0 = \gamma(0), y=\gamma(\delta)$ and $\mu_{x_0}^{\sigma,\varepsilon}, \mu_y^{\sigma,\varepsilon}$ the test measures defined in \cref{def-test-measures} with coordinate representation of \cref{space-curve-test-measure}. For all $\delta, \sigma, \varepsilon >0$ sufficiently small and with $\sigma \vee \varepsilon \leq \f{\delta}{4}$, it holds that
    $$
    \begin{aligned}
    W_1(\mu_{x_0}^{\sigma,\varepsilon}, \mu_{y}^{\sigma,\varepsilon}) 
    &=\|x_0-y\| \left(1 + \f{\sigma^2}{4R^2} - \f{\varepsilon^2}{6R^2} \right) +O(\delta^4).
    \end{aligned}
    $$
    where $R=\f{1}{\|\ddot{\gamma}(0)\|}$ is the radius of the osculating circle.
\end{theorem}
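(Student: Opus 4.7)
The approach mirrors the planar curve case of \cref{planar-curve}: reduce the problem to computing $W_1(\mu_{x_0}^{\sigma,\varepsilon}, T_*\mu_{x_0}^{\sigma,\varepsilon})$, evaluate this as a displacement integral, and match it with a Kantorovich--Rubinstein lower bound, all up to $O(\delta^4)$. First, combining \cref{proposed-approximate} with \cref{asymptotically-optimal-transport} (applied with the degree $3$ approximate-transport property and $\textrm{diam supp } \mu_y^{\sigma,\varepsilon} = O(\delta)$ under $\sigma \vee \varepsilon \leq \delta/4$), I obtain
$$
W_1(\mu_{x_0}^{\sigma,\varepsilon}, \mu_y^{\sigma,\varepsilon}) = W_1(\mu_{x_0}^{\sigma,\varepsilon}, T_*\mu_{x_0}^{\sigma,\varepsilon}) + O(\delta^4),
$$
so it suffices to compute the right-hand side.

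Next, I would Taylor expand $\phi(\alpha,\beta_1,\beta_2) = \gamma(\alpha) + \beta_1\mathbf{n}_1(\alpha) + \beta_2\mathbf{n}_2(\alpha)$ about the origin in the orthonormal basis $(\dot\gamma(0),\mathbf{n}(0),\mathbf{b}(0))$, using the Frenet--Serret identities \eqref{frenet-serret-first-derivatives}--\eqref{frenet-serret-second-derivatives} to substitute for the derivatives of $\gamma$, $\mathbf{n}$, $\mathbf{b}$. Subtracting this from the analogous expansion of $\phi(\delta-\alpha,\beta_1,\beta_2)$ (the $O(\delta^3)$ corrections in the $\beta$-slots of $T$ recorded in \cref{space-curve-test-measure} contribute only at order $\delta^4$ after pairing with the leading $O(\delta)$ displacement) gives a difference which factors as $(\delta-2\alpha)$ times a vector whose tangential component is $1 - \beta_1/R + O(\delta^2)$ and whose normal components are $O(\delta)$. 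Applying $\sqrt{1+x} = 1 + x/2 - x^2/8 + O(x^3)$ then yields an explicit expression for $\|T\phi - \phi\|$ accurate to $O(\delta^4)$.

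For the upper bound I would multiply this norm by the density factor $1 - \beta_1/R + O(\delta^2)$ of \cref{space-curve-test-measure} and integrate over $(-\varepsilon,\varepsilon) \times \tilde{B}_\sigma^2$. By symmetry only monomials even in $\alpha$ and in each of $\beta_1,\beta_2$ survive; the governing moments are $\frac{1}{2\varepsilon}\int_{-\varepsilon}^\varepsilon \alpha^2\, d\alpha = \varepsilon^2/3$ and $\frac{1}{\pi\sigma^2}\int_{\tilde{B}_\sigma^2} \beta_i^2\, d\beta_1 d\beta_2 = \sigma^2/4$, the latter producing the coefficient $\sigma^2/(4R^2)$ characteristic of codimension $k=2$ (as opposed to $\sigma^2/(3R^2)$ in the planar case). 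Torsion-dependent contributions to $\|T\phi - \phi\|$ appear only with odd powers of $\alpha$ or of $\beta_2$ and are annihilated by the integration, consistently with the theorem's answer being free of $\tau$.

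For the lower bound I would use the 1-Lipschitz test function $f(z) = \<z - x_0, (y-x_0)/\|y-x_0\|\>$. From $\gamma(\delta) - \gamma(0) = \delta\dot\gamma(0) + (\delta^2/(2R))\mathbf{n}(0) + O(\delta^3)$ one derives $\|y - x_0\| = \delta(1 - \delta^2/(24R^2)) + O(\delta^4)$ and the unit vector $(y-x_0)/\|y-x_0\|$ to sufficient precision; projecting the already-computed displacement $T\phi - \phi$ onto this direction yields $f(T\phi) - f(\phi) = \|T\phi - \phi\| + O(\delta^4)$, after which \cref{test-function-gradient} forces the upper and lower bounds to agree up to $O(\delta^4)$. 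The main obstacle is the bookkeeping of the many $\tau$-, $\dot\tau$-, and $\dot R$-dependent monomials produced by the Frenet--Serret expansion: since the stated answer depends only on $R$, every torsion-bearing monomial must either cancel inside the square-root expansion of the norm or be killed by odd-order symmetry of the integration domain, and the bulk of the routine calculation consists in verifying this systematically.
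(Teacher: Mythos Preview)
Your proposal is correct and follows essentially the same route as the paper: reduction to $W_1(\mu_{x_0}^{\sigma,\varepsilon}, T_*\mu_{x_0}^{\sigma,\varepsilon})$ via the degree-3 approximate transport, Frenet--Serret expansion of $\phi$ to compute $\|T\phi-\phi\|$, integration against the density of \cref{space-curve-test-measure} using the moments $\varepsilon^2/3$ and $\sigma^2/4$, and the same linear test function $f(z)=\langle z-x_0,(y-x_0)/\|y-x_0\|\rangle$ for the matching lower bound through \cref{test-function-gradient}. Your observation that all $\tau$-, $\dot\tau$-, and $\dot R$-bearing monomials are eliminated by odd parity in $\alpha$ or $\beta_2$ is exactly how the paper's computation closes as well.
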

\begin{proof}
Due to \cref{good-approximation-map}, it is sufficient to work with the distance $W_1(\mu_{x_0}^{\sigma,\varepsilon}, T_* \mu_{x_0}^{\sigma,\varepsilon})$ as it approximates $W_1(\mu_{x_0}^{\sigma,\varepsilon}, \mu_{y}^{\sigma,\varepsilon})$.
The computation of the pairwise distances is similar to the planar curve case, \eqref{planar-case-exp}, with additional terms due to the component $\mathbf{b}(\alpha)$. Concretely, since
$$
\begin{aligned}
\mathbf{b}(\alpha) &= \mathbf{b}(0) + \alpha \dot{\mathbf{b}}(0) + \f{1}{2}\alpha^2 \ddot{\mathbf{b}}(0) + O(\alpha^3) \\
&= \f{\alpha^2 \tau}{2R} \dot{\gamma}(0) - \left(\alpha \tau + \f{1}{2}\alpha^2 \dot{\tau}\right) \mathbf{n}(0) + \left(1 - \f{1}{2}\alpha^2 \tau^2 \right) \mathbf{b}(0) + O(\alpha^3),
\end{aligned}
$$
and using the derivatives \eqref{frenet-serret-first-derivatives} and \eqref{frenet-serret-second-derivatives}, compute
\begin{equation}
\label{fermi-space-curve}
\begin{aligned}
    \phi(\alpha,\beta_1,\beta_2)
    &= x_0 + \left( \alpha - \f{\alpha \beta_1}{R} - \f{\alpha^3}{6R^2} + \f{\beta_1 \alpha^2 \dot{R}}{2R^2} + \f{\beta_2 \alpha^2 \tau}{2R} + O(\delta^4) \right) \dot{\gamma}(0) \\
    & \qquad + \left(\beta_1 + \f{\alpha^2}{2R} + O(\delta^3) \right) \mathbf{n}(0) + \left( \beta_2 + O(\delta^3) \right) \mathbf{b}(0).
\end{aligned}
\end{equation}
Then similarly to \eqref{planar-case-exp} we obtain
\begin{equation}
\label{space-curve-pointwise-distance}
\begin{aligned}
&\|T(\phi(\alpha, \beta_1,\beta_2))-\phi(\alpha,\beta_1,\beta_2)\| \\
&= \bigg\| \gamma(\delta-\alpha) + \beta_1 \mathbf{n}(\delta-\alpha) + \beta_2 \mathbf{b}(\delta-\alpha) -\gamma(\alpha)-\beta_1 \mathbf{n}(\alpha) - \beta_2 \mathbf{b}(\alpha) \bigg\|\\
&=(\delta-2\alpha) \bigg\| \left(1 - \f{\beta_1}{R}-\f{1}{6R^2} (\delta^2-\delta\alpha+\alpha^2) +\f{\beta_1 \dot{R}}{2R^2}\delta + \f{\beta_2 \delta \tau}{2R} + O(\delta^3) \right) \dot{\gamma}(0) \\
&\hspace{2.5cm} + \left(\f{1}{2R} \delta + O(\delta^2) \right) \mathbf{n}(0)  +O(\delta^2) \mathbf{b}(0) \bigg\| \\
&= (\delta-2\alpha) \left( 1 -\f{\beta_1}{R} - \f{\delta^2}{24R^2} + \f{\delta \alpha}{6R^2} - \f{\alpha^2}{6R^2} + \f{\beta_1 \dot{R}}{2R^2}\delta + \f{\beta_2 \delta \tau}{2R}  + O(\delta^3) \right).
\end{aligned}
\end{equation}
The Wasserstein distance upper bound is then computed by integration with respect to $\mu_{x_0}^{\sigma,\varepsilon}$ using the coordinate representation of \cref{space-curve-test-measure} as
$$
\begin{aligned}
W_1(\mu_{x_0}^{\sigma,\varepsilon}, T_*\mu_{x_0}^{\sigma,\varepsilon}) & \leq \int_{\tilde{B}_{\sigma,\varepsilon}} \|T(\phi(\alpha,\beta_1,\beta_2)) - \phi(\alpha,\beta_1,\beta_2)\| d(\phi^{-1}_* \mu_{x_0}^{\sigma,\varepsilon})(\alpha,\beta_1,\beta_2)\\
&= \delta \left(1 - \f{\delta^2}{24R^2} + \f{\sigma^2}{4R^2} -\f{\varepsilon^2}{6R^2} \right) +O(\delta^4)\\
&= \|x_0-y\| \left(1 + \f{\sigma^2}{4R^2} -\f{\varepsilon^2}{6R^2} \right) +O(\delta^4)
\end{aligned}
$$
applying on the last line that $\|x_0-y\| = \delta\left( 1 - \f{\delta^2}{24R^2}\right) +O(\delta^4)$. In the integral on the first line, terms of odd order vanish upon integration, and the remaining terms amount to integration of quadratic polynomials.

We now address the lower bound. 
Analogously to the plane curve case, define the test function for the Kantorovich-Rubinstein duality  as
$$
f(\phi(\alpha,\beta_1,\beta_2)) := \< \phi(\alpha,\beta_1,\beta_2)- x_0, \f{y-x_0}{\|y-x_0\|} \>
$$
which is again clearly 1-Lipschitz in $\R^3$. We wish to apply \cref{test-function-gradient} to show the lower bound and upper bound coincide up to $O(\delta^4)$.
Noting that $y-x_0 = \phi(\delta,0,0)-\phi(0,0,0)$, we deduce from \eqref{space-curve-pointwise-distance} that
$$
\begin{aligned}
y-x_0 &= \delta \left( 1- \f{\delta^2}{6R^2} + O(\delta^3) \right) \dot{\gamma}(0) + \delta \left(\f{\delta}{2R} + O(\delta^2) \right) \mathbf{n}(0) + O(\delta^3) \mathbf{b}(0),\\
\|y-x_0\| &= \delta \left( 1 - \f{\delta^2}{24R^2} + O(\delta^3) \right),
\end{aligned}
$$
and so
$$
\begin{aligned}
\f{y-x_0}{\|y-x_0\|} &= \left(1-\f{\delta^2}{8R^2} + O(\delta^3) \right)\dot{\gamma}(0) + \left(\f{\delta}{2R} + O(\delta^2)\right)\mathbf{n}(0) + O(\delta^2)\mathbf{b}(0).
\end{aligned}
$$
Therefore
$$
\begin{aligned}
    &f(\phi(T(\alpha,\beta_1,\beta_2))) - f(\alpha,\beta_1,\beta_2) \\
    &= \< T(\phi(\alpha,\beta_1,\beta_2)) -\phi(\alpha,\beta_1,\beta_2),  \f{y-x_0}{\|y-x_0\|} \> \\
    &= (\delta-2\alpha) \left( 1 -\f{\beta_1}{R} - \f{\delta^2}{24R^2} + \f{\delta \alpha}{6R^2} - \f{\alpha^2}{6R^2} + \f{\beta_1 \delta \dot{R}}{2R^2} + \f{\beta_2 \delta \tau}{2R}  + O(\delta^3) \right) .
\end{aligned}
$$
This is the same expression as for $\|T(\phi(\alpha, \beta_1,\beta_2))-\phi(\alpha,\beta_1,\beta_2)\|$, hence \cref{test-function-gradient} applies and the lower and upper bounds agree up to $O(\delta^4)$.
\end{proof}

\subsection{Surface}
\label{surface-case}
We now consider a smooth 2-surface $M \subset \R^3$ and $\gamma: (-1,1) \rightarrow M$ a unit speed geodesic in $M$, denoting again $x_0 := \gamma(0), y:= \gamma(\delta)$ for $\delta >0$ sufficiently small. Let $\mathbf{n} \in \Gamma(TM^\perp)$ be the unit normal vector field and $\mathbf{m} \in \Gamma(TM|_\gamma)$ the unit vector field along $\gamma$ orthogonal to the velocity $\dot{\gamma}$. Both $\mathbf{n}$ and $\mathbf{m}$ are unique up to sign.

\begin{figure}
    \centering
    \begin{tikzpicture}
    \draw[thick] (-3,0.5) -- (-4,-4);
    \draw[thick] plot[smooth, tension=0.8] coordinates {(-4,-4) (2, -3) (8, -4)};
    \draw[thick] (9,0.5) -- (8,-4);
    \draw[thick] plot[smooth, tension=0.8] coordinates {(-3,0.5) (3, 1.5) (9, 0.5)};

    \draw[thick] plot[smooth, tension=0.8] coordinates {(-2,-1.5) (0.3, -0.9) (5, -1) (7, -1.5)};

    \draw[thick, blue, ->] (0.2, -0.9) -- (1.2, -0.75);
    \node[label=above right:$\color{blue}{\dot{\gamma}(0)}$] at (1.2,-0.9){};
    \draw[thick, green, ->] (0.2, -0.9) -- (0.4, 0);
    \node[label=right:$\color{green}{\mathbf{m}(0)}$] at (0.4, 0){};
    \draw[thick, red, ->] (0.2, -0.9) -- (0.2, 0.1);
    \node[label=left:$\color{red}{\mathbf{n}(0,0)}$] at (0.2, 0.1){};

    \node[fill, circle, scale=0.5, label=below:$x_0$] at (0.2, -0.9){};

    \draw[dashed, green] (4.9, -1) -- (5.3, 0.6);
    \draw[thick, green, ->] (4.9, -1) -- (5.1, -0.2);
    \node[label=right:$\color{green}{\mathbf{m}(\alpha_1)}$] at (5.1, -0.2){};

    \node[fill, circle, scale=0.5, label=below:$\gamma(\alpha_1)$] at (4.9, -1){};

     \node[fill, circle, scale=0.5, label=right:${\psi(\alpha_1, \alpha_2)}$] at (5.3, 0.6){};
     
    \draw[dashed, red] (5.3, 0.6) -- (5.7, 2.4);
    \draw[thick, red, ->] (5.3, 0.6) -- (5.5, 1.5);
    \node[label=right:$\color{red}{\mathbf{n}(\alpha_1,\alpha_2)}$] at (5.5, 1.5){};

    \node[fill, circle, scale=0.5, label=right:${\phi(\alpha_1,\alpha_2,\beta)}$] at (5.7, 2.4){};

    \node[label=below:$M$] at (8,-4){};

    \node[label=above left:$\gamma$] at (-2,-2){};
    
    \end{tikzpicture}
    \caption{Fermi coordinates along $\gamma$ adapted to the surface $M$ embedded in $\R^3$.}
    \label{fermi-coordinates-surface-fig}
\end{figure}
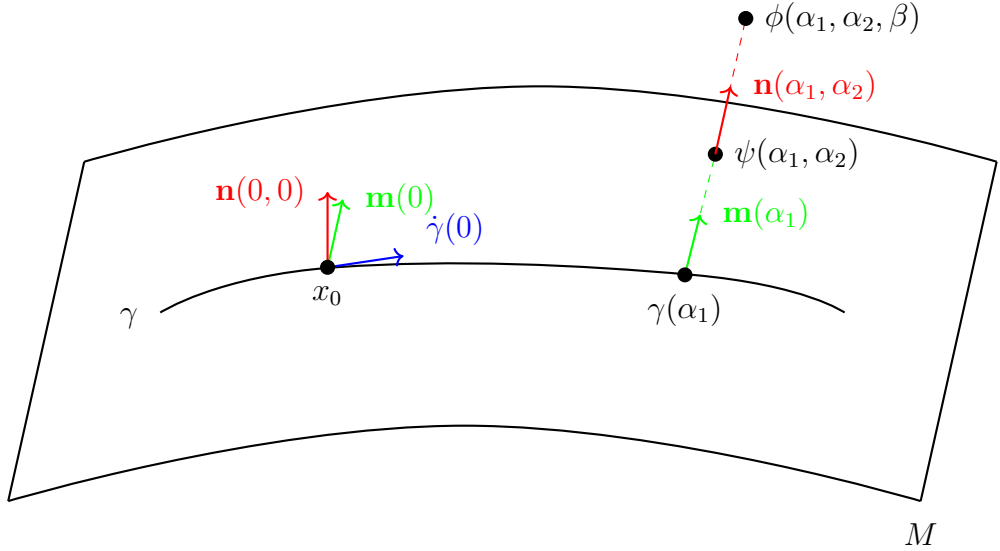

\begin{definition}
\begin{itemize}
\item Define the Fermi coordinates $\psi: (-\delta_0,\delta_0) \times (-\varepsilon_0,\varepsilon_0) \rightarrow M$ along $\gamma$ in $M$ as
$$
\psi(\alpha_1,\alpha_2) = \exp_{M,\gamma(\alpha_1)}(\alpha_2 \mathbf{m}(\alpha_1)).
$$

\item Define the Fermi coordinates $\phi: (-\delta_0,\delta_0) \times (-\varepsilon_0, \varepsilon_0) \times (-\sigma_0, \sigma_0) \rightarrow \R^3$ along $\gamma$ in $\R^3$ adapted to the surface $M$ as
$$
\phi(\alpha_1,\alpha_2,\beta) = \psi(\alpha_1,\alpha_2) + \beta \mathbf{n}(\alpha_1,\alpha_2).
$$
See \cref{fermi-coordinates-surface-fig} for a graphical representation of $\psi$ and $\phi$.

\item For $i,j \in \{1,2\}$ denote the components of the second fundamental form in the Fermi coordinates
$$
\II_{ij}(\alpha) =\<\mathbf{n}(\alpha), \partial_{\alpha_i} \partial_{\alpha_j} \phi(\alpha,\mathbf{0})\>.
$$
\end{itemize}
\end{definition}

\begin{remark}
We point out that we overload the second fundamental form symbol $\II$ depending on the context of use. In the notation \eqref{sff} and in the statements of \cref{surface-coarse-curvature} and \cref{general-submanifold}, the subscript is the point $x_0$ on the manifold and the bracket arguments are tangent vectors. On the other hand, in coordinate computations taking place in the proofs, the subscripts will represent components with respect to the Fermi frame at Fermi coordinates $\alpha,\beta$ in brackets.
\end{remark}

Similarly to the Frenet-Serret frame in the case of a planar curve, we now consider the orthonormal frame $(\dot{\gamma}, \mathbf{m},\mathbf{n})$ with the intent to expand at $x_0$, i.e. $\alpha_1=\alpha_2=\beta=0$. 

\begin{lemma}
\label{derivative-catalogue}
The first derivatives of the normal vector field at $(\alpha_1,\alpha_2) =\mathbf{0}$ are
\begin{equation}
\label{derivative-of-normal}
\partial_{\alpha_1}\mathbf{n}(\mathbf{0}) = -\II_{11}(\mathbf{0}) \dot{\gamma}(0)- \II_{12}(\mathbf{0})\mathbf{m}(0), \quad \partial_{\alpha_2}\mathbf{n}(\mathbf{0}) = -\II_{22}(\mathbf{0})\mathbf{m}(0)- \II_{12}(\mathbf{0}) \dot{\gamma}(0).
\end{equation}
Hence the derivatives of $\phi$ at $(\alpha_1,\alpha_2,\beta) =\mathbf{0}$ up to third order are
$$
\begin{aligned}
\partial_\beta \phi(\mathbf{0}) &= \mathbf{n}(\mathbf{0}), \quad \partial_\beta^k \phi(\mathbf{0}) = 0 \ \textrm{ for } k \geq 2, \\
\partial_{\alpha_1} \phi(\mathbf{0}) &= \dot{\gamma}(0), \quad
\partial_{\alpha_2} \phi(\mathbf{0}) = \mathbf{m}(0), \\
\partial_{\alpha_i} \partial_{\alpha_j} \phi(\mathbf{0}) &=  \II_{ij}(\mathbf{0}) \mathbf{n}(\mathbf{0}), \quad i,j \in \{1,2\}, \\
\partial_\beta \partial_{\alpha_i} \phi(\mathbf{0}) &= -\II_{1i}(\mathbf{0}) \dot{\gamma}(0) - \II_{i2} (\mathbf{0}) \mathbf{m}(0), \quad i \in \{1,2\},\\
\partial_{\alpha_1}^3 \phi(\mathbf{0}) &= -\II_{11}(\mathbf{0})^2 \dot{\gamma}(0) - \II_{11}(\mathbf{0}) \II_{12}(\mathbf{0}) \mathbf{m}(0) + \partial_{\alpha_1} \II_{11}(\mathbf{0}) \mathbf{n}(\mathbf{0}),\\
\partial_{\alpha_1}\partial_{\alpha_2}^2 \phi(\mathbf{0}) &=  - \II_{11}(\mathbf{0}) \II_{22}(\mathbf{0}) \dot{\gamma}(0) - \II_{12}(\mathbf{0}) \II_{22}(\mathbf{0}) \mathbf{m}(0) +  \partial_{\alpha_1} \II_{22}(\mathbf{0}) \mathbf{n}(\mathbf{0}),\\
\partial_{\alpha_2}\partial_{\alpha^1}^2 \phi(\mathbf{0}) &= - \II_{11}(\mathbf{0}) \II_{12}(\mathbf{0}) \dot{\gamma}(0) - \II_{12}(\mathbf{0})^2 \mathbf{m}(0)  + \partial_{\alpha_2} \II_{11}(\mathbf{0}) \mathbf{n}(\mathbf{0}),\\
\partial_{\alpha_2}^3 \phi(\mathbf{0}) &= - \II_{12}(\mathbf{0}) \II_{22}(\mathbf{0}) \dot{\gamma}(0) - \II_{22}(\mathbf{0})^2 \mathbf{m}(0) + \partial_{\alpha_2} \II_{22}(\mathbf{0}) \mathbf{n}(\mathbf{0}).
\end{aligned}
$$
\end{lemma}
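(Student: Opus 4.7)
The plan is to first derive \eqref{derivative-of-normal} by differentiating the normalization and tangency identities for $\mathbf{n}$, and then to exploit the structure $\phi = \psi + \beta\mathbf{n}$ together with the adapted geometry of the Fermi chart to obtain the higher derivatives of $\phi$ by a case analysis on the indices.

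First, for the derivatives of $\mathbf{n}$, differentiating $\|\mathbf{n}\|^2 = 1$ shows $\partial_{\alpha_i}\mathbf{n}(\mathbf{0}) \perp \mathbf{n}(\mathbf{0})$, so it lies in $\spann\{\dot{\gamma}(0), \mathbf{m}(0)\} = T_{x_0}M$. Differentiating the tangency condition $\<\mathbf{n}, \partial_{\alpha_j}\psi\> = 0$ and inserting the definition of $\II_{ij}$ gives $\<\partial_{\alpha_i}\mathbf{n}(\mathbf{0}), \partial_{\alpha_j}\psi(\mathbf{0})\> = -\II_{ij}(\mathbf{0})$. Since $\partial_{\alpha_1}\psi(\mathbf{0}) = \dot{\gamma}(0)$ and $\partial_{\alpha_2}\psi(\mathbf{0}) = \mathbf{m}(0)$ form an orthonormal basis of $T_{x_0}M$, and $\II$ is symmetric, \eqref{derivative-of-normal} follows.

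Next, for the derivatives of $\phi$, the $\beta$-derivatives are immediate from the linear dependence $\phi = \psi + \beta\mathbf{n}$: we get $\partial_\beta\phi = \mathbf{n}$, higher $\beta$-derivatives vanish, and $\partial_\beta\partial_{\alpha_i}\phi(\mathbf{0}) = \partial_{\alpha_i}\mathbf{n}(\mathbf{0})$ from step 1. The first-order $\alpha$-derivatives at $\mathbf{0}$ follow from $\psi(\alpha_1,0) = \gamma(\alpha_1)$ and the identity $\partial_{\alpha_2}\psi(\alpha_1,0) = \mathbf{m}(\alpha_1)$, which uses that the differential of $\exp_M$ at the origin is the identity. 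For $\partial_{\alpha_i}\partial_{\alpha_j}\phi(\mathbf{0}) = \partial_{\alpha_i}\partial_{\alpha_j}\psi(\mathbf{0})$ I would decompose $\partial_{\alpha_i}\partial_{\alpha_j}\psi = \Gamma^\ell_{ij}\partial_{\alpha_\ell}\psi + \II_{ij}\mathbf{n}$; the Christoffel symbols of the Fermi chart vanish at $\mathbf{0}$ by the arguments already established in the proof of \cref{test-measures-in-fermi-coordinates} (normal coordinates inside each slice, parallel frame along $\gamma$), so only the normal piece survives.

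The main difficulty lies in the third derivatives, which I would obtain by identifying second derivatives that are purely normal on convenient slices and then differentiating once more. Since $\gamma$ is an $M$-geodesic, $\partial_{\alpha_1}^2\psi(\alpha_1,0) = \ddot{\gamma}(\alpha_1) = \II_{11}(\alpha_1,0)\mathbf{n}(\alpha_1,0)$; since $\mathbf{m}$ is parallel along $\gamma$ in $M$, equality of mixed partials yields $\partial_{\alpha_2}\partial_{\alpha_1}\psi(\alpha_1,0) = \partial_{\alpha_1}\mathbf{m}(\alpha_1) = \II_{12}(\alpha_1,0)\mathbf{n}(\alpha_1,0)$; and since each $\alpha_2$-curve is an $M$-geodesic, $\partial_{\alpha_2}^2\psi(\alpha) = \II_{22}(\alpha)\mathbf{n}(\alpha)$ for every $\alpha$. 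Differentiating these three identities once more in the appropriate variable and substituting \eqref{derivative-of-normal} produces $\partial_{\alpha_1}^3\phi(\mathbf{0})$, $\partial_{\alpha_1}\partial_{\alpha_2}^2\phi(\mathbf{0})$ and $\partial_{\alpha_2}^3\phi(\mathbf{0})$ directly. For $\partial_{\alpha_2}\partial_{\alpha_1}^2\phi(\mathbf{0}) = \partial_{\alpha_1}\partial_{\alpha_2}\partial_{\alpha_1}\psi(\mathbf{0})$, the natural output has normal coefficient $\partial_{\alpha_1}\II_{12}(\mathbf{0})$, which coincides with the claimed $\partial_{\alpha_2}\II_{11}(\mathbf{0})$ by the Codazzi-Mainardi equation; at $\mathbf{0}$ this reduces to full symmetry $\partial_k\II_{ij}(\mathbf{0}) = \partial_i\II_{kj}(\mathbf{0})$ in the three indices thanks to the vanishing of the Christoffel symbols.
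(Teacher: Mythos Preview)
Your proposal is correct and follows essentially the same route as the paper: differentiate the identities $\|\mathbf{n}\|^2=1$ and $\langle\mathbf{n},\partial_{\alpha_j}\psi\rangle=0$ for \eqref{derivative-of-normal}, use the Fermi structure to see that the second $\alpha$-derivatives of $\psi$ are purely normal along the relevant slices ($\gamma$ a geodesic, $\mathbf{m}$ parallel along $\gamma$, each $\alpha_2$-curve a geodesic), and then differentiate those slice identities once more for the third derivatives.

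One point where you are actually more careful than the paper: for $\partial_{\alpha_2}\partial_{\alpha_1}^2\phi(\mathbf{0})$ the paper's computation differentiates $\II_{12}(\alpha_1,0)\mathbf{n}(\alpha_1,0)$ in $\alpha_1$ and so produces the normal coefficient $\partial_{\alpha_1}\II_{12}(\mathbf{0})$, whereas the displayed formula in the statement has $\partial_{\alpha_2}\II_{11}(\mathbf{0})$. Your invocation of Codazzi--Mainardi (which in the flat ambient space and at a point where the Christoffel symbols vanish reduces to $\partial_{\alpha_1}\II_{12}(\mathbf{0})=\partial_{\alpha_2}\II_{11}(\mathbf{0})$) is exactly what reconciles the two; the paper leaves this step implicit.
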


\begin{proof}
The derivatives involving $\partial_\beta$ are clear, recalling the definition
$$
\phi(\alpha_1,\alpha_2,\beta) := \psi(\alpha_1,\alpha_2) + \beta \mathbf{n}(\alpha_1,\alpha_2),
$$
and the first derivatives in $\alpha_1,\alpha_2$ follow from the definition of $\psi(\alpha_1,\alpha_2)$.

For $\partial_{\alpha_1} \mathbf{n}(\mathbf{0})$ we check its components with respect to the frame $(\dot{\gamma}, \mathbf{m}, \mathbf{n})$,
    $$
    \begin{aligned}
    \langle \partial_{\alpha_1} \mathbf{n}(\mathbf{0}), \mathbf{n}(\mathbf{0}) \rangle &= \f{1}{2} \partial_{\alpha_1} \< \mathbf{n}, \mathbf{n} \>(\mathbf{0}) = 0, \\
    \<\partial_{\alpha_1} \mathbf{n}(\mathbf{0}), \dot{\gamma}(0) \> &= \partial_{\alpha_1} \<\mathbf{n}, \partial_{\alpha_1} \psi\>(\mathbf{0}) - \< \mathbf{n}(\mathbf{0}), \partial_{\alpha_1}^2 \psi(\mathbf{0})\> =  - \II_{11}(\mathbf{0}), \\
    \<\partial_{\alpha_1} \mathbf{n}(\mathbf{0}), \mathbf{m}(0) \> &= \partial_{\alpha_1} \<\mathbf{n}, \partial_{\alpha_2} \psi\>(\mathbf{0})  - \< \mathbf{n}(\mathbf{0}), \partial_{\alpha_1} \partial_{\alpha_2}\psi (\mathbf{0}) \>= -\II_{12}(\mathbf{0})\\
    \end{aligned}
    $$
    and similarly for $\partial_{\alpha_2} \mathbf{n}(\mathbf{0})$.
    
    For the second derivatives in $\alpha_1,\alpha_2$ at $(\alpha_1,0,0)$ for any $\alpha_1 \in (-\delta_0,\delta_0)$ and $j=1,2$,
    $$
    \begin{aligned}
    \partial_{\alpha_1} \partial_{\alpha_j} \phi (\alpha_1,0,0) &= \partial_{\alpha_1} \partial_{\alpha_j} \psi (\alpha_1,0) \\
    &= \nabla^{\R^3}_{\partial_{\alpha_1}} \partial_{\alpha_j} \psi(\alpha_1,0) - \nabla^M_{\partial_{\alpha_1}} \partial_{\alpha_j} \psi (\alpha_1,0)
    \\
    &= \II(\partial_{\alpha_1}\psi(\alpha_1,0), \partial_{\alpha_j} \psi(\alpha_1,0))
    = \II_{1j}(\alpha_1,0) \mathbf{n}(\alpha_1,0)
    \end{aligned}
    $$
    having introduced the term $\nabla^M_{\partial_{\alpha_1}} \partial_{\alpha_j} \psi(\alpha_1,0)$, which vanishes for $j=1,2$, because $\alpha_1 \mapsto \psi(\alpha_1,0)$ is a geodesic on $M$ and $\mathbf{m}(\alpha_1)$ is the parallel translation of $\mathbf{m}(0)$ along $\gamma$. By the same argument, for any $(\alpha_1, \alpha_2) \in (-\delta_0,\delta_0) \times (-\varepsilon_0,\varepsilon_0)$,
    $$
    \partial_{\alpha_2}^2 \phi (\alpha_1,\alpha_2,0) = \II_{22}(\alpha_1,\alpha_2)\mathbf{n}(\alpha_1,\alpha_2)
    $$
    because $\alpha_2 \mapsto \psi(\alpha_1,\alpha_2)$ is a geodesic for every $\alpha_1 \in (-\delta_0,\delta_0)$.
    
    For the second derivatives in $\beta$ and one of $\alpha_1$ and $\alpha_2$, deduce $\partial_\beta \partial_{\alpha_i} \phi(\mathbf{0}) = \partial_{\alpha_i} \mathbf{n}(\mathbf{0})$
    and plug in for $\partial_{\alpha_i} \mathbf{n}(\mathbf{0})$.
    
    For the third derivatives at $(\alpha_1,\alpha_2,\beta) =\mathbf{0}$, write by the chain rule
    $$
    \begin{aligned}
    \partial_{\alpha_1}^3 \phi (\mathbf{0}) &= \partial_{\alpha_1}|_{\alpha_1=0} (\partial_{\alpha_1}^2 \phi(\alpha_1,0,0)) = \II_{11}(\mathbf{0}) \partial_{\alpha_1}\mathbf{n}(\mathbf{0}) + \partial_{\alpha_1}\II_{11}(\mathbf{0})\mathbf{n}(\mathbf{0}), \\
    \partial_{\alpha_1}\partial_{\alpha_2}^2 \phi (\mathbf{0}) &= \partial_{\alpha_1}|_{\alpha_1=0} (\partial_{\alpha_2}^2 \phi(\alpha_1,0,0))= \II_{22}(\mathbf{0}) \partial_{\alpha_1} \mathbf{n}(\mathbf{0}) + \partial_{\alpha_1} \II_{22}(\mathbf{0}) \mathbf{n}(\mathbf{0}),\\
    \partial_{\alpha_2}\partial_{\alpha_1}^2 \phi (\mathbf{0}) &= \partial_{\alpha_1}|_{\alpha_1=0} (\partial_{\alpha_1} \partial_{\alpha_2} \phi(\alpha_1,0,0)) =\II_{12}(\mathbf{0}) \partial_{\alpha_1} \mathbf{n}(\mathbf{0}) + \partial_{\alpha_1} \II_{12}(\mathbf{0}) \mathbf{n}(\mathbf{0}), \\
    \partial_{\alpha_2}^3 \phi(\mathbf{0}) & = \partial_{\alpha_2}|_{\alpha_2=0} (\partial_{\alpha_2}^2 \phi(0,\alpha_2,0)) = \II_{22}(\mathbf{0}) \partial_{\alpha_2} \mathbf{n}(0) + \partial_{\alpha_2} \II_{22}(\mathbf{0}) \mathbf{n}(\mathbf{0})
    \end{aligned}
    $$
    and plug in for $\partial_{\alpha_i} \mathbf{n}(\mathbf{0})$ in each.
\end{proof}

Denote $D_u V$ the plain derivative in the direction $u \in \R^n$ of a vector field $V$ as a smooth map from an open subset of $\R^n$ to $\R^n$.

\begin{notation}
Denote $\tilde{B}_{\sigma,\varepsilon} := \{(\alpha_1,\alpha_2,\beta): \alpha_1^2+\alpha_2^2 < \varepsilon^2, |\beta| < \sigma \} \subset \R^3$.

Consider the family of surfaces
$
\{ \phi(U, \beta) : \beta \in (-\sigma_0, \sigma_0) \}.
$
For any $\beta \in (-\sigma_0,\sigma_0)$, we denote the unit normal vector field of the surface as $\mathbf{n}(\alpha_1,\alpha_2)$, which is unique up to sign. The corresponding mean curvature is:
$$
H(\phi(\alpha_1,\alpha_2,\beta)) = \< \mathbf{n}(\alpha_1,\alpha_2), \sum_{i=1}^2 \nabla^{\R^3}_{e_i} e_i(\phi(\alpha_1,\alpha_2,\beta)) \>
$$
where $(e_1,e_2)$ is an orthonormal frame on each $\phi(U,\beta)$.
\end{notation}

\begin{remark}
\label{surface-measures-coordinates}
\begin{itemize}
    \item [(1)] As a special case of \cref{test-measures-in-fermi-coordinates}, the test measures at $y=\gamma(\delta)$ in these Fermi coordinates are
$$
\label{test-measure-surface}
\begin{aligned}
&(\phi^{-1}_* \mu_{y}^{\sigma,\varepsilon})(d\alpha_1,d\alpha_2,d\beta) \\
&= \f{\mathbbm{1}_{\tilde{B}_{\sigma,\varepsilon}}(\delta + \alpha_1,\alpha_2,\beta)}{\int_{\tilde{B}_{\sigma,\varepsilon}} 1+r(\alpha,\beta) d(\phi^{-1}_* \mu_y^{\sigma,\varepsilon})(\alpha,\beta)}   \bigg( 1 - \beta (\II_{11}(\mathbf{0}) + \II_{22}(\mathbf{0})) + r(\alpha,\beta) \bigg) d\alpha_1 d\alpha_2 d\beta
\end{aligned}
$$
where $r(\alpha,\beta)=O(\delta^2)$ is a second order remainder.

\item [(2)] The proposed transport map of \cref{proposed-transport-map} reduces, in this case, to
$$
\begin{aligned}
T(\phi(\alpha_1,\alpha_2,\beta)) &= \phi(\delta-\alpha_1, \alpha_2 + O(\delta^3), \beta +O(\delta^3)).
\end{aligned}
$$
See Figures \ref{surface-case-fig}, \ref{top-down-perspective} and \ref{cross-sectional-perspective} for a pictorial representation of this map.
\end{itemize}
\end{remark}

\begin{figure}
    \centering
    \begin{tikzpicture}
    \fill [rotate=15, fill=red!30, opacity=0.7] (-1,0) -- (-1,-2) arc (180:360:1 and 0.5) -- (1,0) arc (0:180:1 and 0.5);
    \fill [rotate=-15, fill=red!30, opacity=0.7] (4,1.3) -- (4,-0.7) arc (180:360:1 and 0.5) -- (6,1.3) arc (0:180:1 and 0.5);

    \draw[thick] plot[smooth, tension=0.8] coordinates {(-2,-2) (0.3, -0.9) (5, -1) (7, -2)};

    \node[label=left:$\gamma$] at (-2,-2){};
    
    \draw[rotate=15] (0,0) ellipse (1 and 0.5);
    \draw[rotate=15, dashed] (0,-1) ellipse (1 and 0.5);
    \draw[rotate=15] (-1,0) -- (-1,-2);
    \draw[rotate=15] (-1,-2) arc (180:360:1 and 0.5);
    \draw[rotate=15, dashed] (-1,-2) arc (180:360:1 and -0.5);
    \draw[rotate=15] (1,-2) -- (1,0);

    \draw[rotate=-15] (5,1.3) ellipse (1 and 0.5);
    \draw[rotate=-15, dashed] (5,0.3) ellipse (1 and 0.5);
    \draw[rotate=-15] (4,1.3) -- (4,-0.7);
    \draw[rotate=-15] (4,-0.7) arc (180:360:1 and 0.5);
    \draw[rotate=-15, dashed] (4,-0.7) arc (180:360:1 and -0.5);
    \draw[rotate=-15] (6,-0.7) -- (6,1.3);  

    \draw[thick] (-3,0.5) -- (-4,-4);
    \draw[thick] plot[smooth, tension=0.8] coordinates {(-4,-4) (2, -3) (8, -4)};
    \draw[thick] (9,0.5) -- (8,-4);
    \draw[thick] plot[smooth, tension=0.8] coordinates {(-3,0.5) (3, 1.5) (9, 0.5)};

    \node[fill, circle, scale=0.5, label=below:$x_0$] at (0.2, -0.9){};

    \node[fill, circle, scale=0.5, label=below:$y$] at (4.9, -0.95){};

    \node[label=below:$M$] at (8,-4){};

    \node[label=below left:$\color{red}{\mu_{x_0}^{\sigma,\varepsilon}}$] at (0,-2.2){};
    \node[label=below right:$\color{red}{\mu_{y}^{\sigma,\varepsilon}}$] at (5.5,-2.2){};

    \draw[thick, blue, ->] plot[] coordinates {(-0.45, -2.25) (5.6, -2.25)};
    \draw[thick, blue, ->] plot[] coordinates {(1.5, -1.8) (3.7, -1.8)};
    \draw[thick, blue, ->] plot[] coordinates {(0.95, 0.25) (4.2, 0.25)};
    \draw[thick, blue, ->] plot[] coordinates {(-0.95, -0.2) (6.15, -0.2)};

    \node[label=above:$\color{blue}{T}$] at (2.5, 0.3){};

    \draw[rotate=15, dashed] (-1.2, 0.2) -- (-1.2, -1.8);
    \draw[rotate=15, <->] (-1.2,0.2) -- (-1.2,-0.8);

    \node[label=left:$\sigma$] at (-1, -0.7){};

    \draw[rotate=15, dashed] (-1, 0.6) -- (1, 0.6);
    \draw[rotate=15, <->] (0, 0.6) -- (1, 0.6);

    \node[label=above:$\varepsilon$] at (0.3, 0.6){};
    
\end{tikzpicture}
    \caption{Test measures in red with some transport pairs of $T$ in blue.}
    \label{surface-case-fig}
\end{figure}

\begin{figure}
    \centering
    \begin{tikzpicture}[scale=0.9]

        \draw (-4.5, -3.5) -- (9.5, -3.5);
        \draw (9.5, -3.5) -- (9.5, 3.5);
        \draw (-4.5, -3.5) -- (-4.5, 3.5);
        \draw (-4.5, 3.5) -- (9.5, 3.5);

        \node[label=above left:$M$] at (9.5, -3.5){};

        \draw (0, 0) circle [radius=2cm];
        \node[fill, circle, scale=0.5, label=below left:$x_0$] at (0, 0){};

        \draw (5, 0) circle [radius=2cm];
        \node[fill, circle, scale=0.5, label=below left:$y$] at (5, 0){};

        \draw[thick] (-3,0) -- (8,0);
        \draw[dashed, ->] (-4,0) -- (9,0);
        \draw[dashed] (5,-3) -- (5,3);
        \node[label=below left:$\gamma$] at (8,0){};
        \node[label=below:$\alpha_1$] at (9,0){};

        \draw[dashed, ->] (0, -3) -- (0, 3);
        \node[label=left:$\alpha_2$] at (0,3){};

        \draw[orange] (1, -1.75) -- (1, 1.75);
        \draw[orange] (4, -1.75) -- (4, 1.75);

        \draw[blue, ->] (1, 1) -- (4, 1);
        \node[label=above:$\color{blue}{T}$] at (2.5,1){};

        \draw[blue, ->] (-1, -1) -- (6, -1);
        \node[label=below:$\color{blue}{T}$] at (2.5,-1){};

        \draw[green] (-1, -1.75) -- (-1, 1.75);
        \draw[green] (6, -1.75) -- (6, 1.75);

    \end{tikzpicture} 
    \caption{Top-down perspective for the transport map $T$.}
    \label{top-down-perspective}
\end{figure}
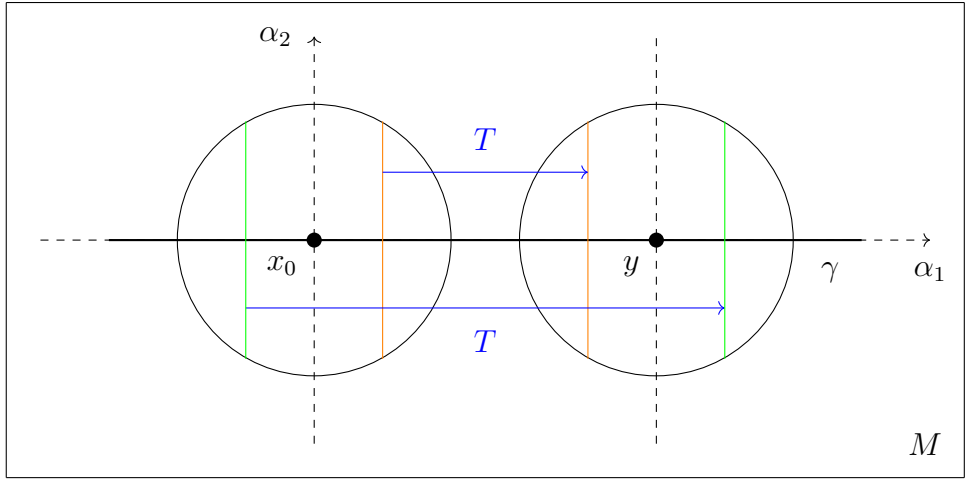

\begin{figure}
    \centering
    \begin{tikzpicture}[scale=0.9]

    \draw[dashed, ->] (0, 1.2) -- (0.4, -1.2);
    \draw[dashed] (0, 1.2) -- (-0.4, 3.6);
    \node[label=right:$\beta$] at (0.4, -1.2){};
        
    \draw[thick] plot[smooth, tension=0.8] coordinates {(-3,0.5) (3, 1.5) (9, 0.5)};
    \draw[dashed, ->] (9, 0.5) -- (10, 0.2);
    \draw[dashed] (-3, 0.5) -- (-4, 0.2);

    \node[label=above:$\gamma \subset M$] at (9,0.5){};
    \node[label=below:$\alpha_1$] at (10,0.2){};

    \draw (-1,-0.5) -- (-1.8, 2.3);

    \draw (1.5,0) -- (1.2, 2.8);

    \draw plot[smooth, tension=0.8] coordinates {(-1, -0.5) (0.25, -0.175) (1.5, 0)};

    \draw plot[smooth, tension=0.8] coordinates {(-1.8, 2.3) (-0.3, 2.65) (1.2, 2.8)};

    \draw (7,-0.5) -- (7.8, 2.3);

    \draw (4.5,0) -- (4.8, 2.8);

    \draw plot[smooth, tension=0.8] coordinates {(7, -0.5) (5.75, -0.175) (4.5, 0)};

    \draw plot[smooth, tension=0.8] coordinates {(7.8, 2.3) (6.3, 2.65) (4.8, 2.8)};

    \node[fill, circle, scale=0.5, label=below left:$x_0$] at (0, 1.2){};
    \node[fill, circle, scale=0.5, label=below:$y$] at (6, 1.2){};

    \draw[red] plot[smooth, tension=0.8] coordinates {(-1.6, 1.6) (-0.1625, 1.93) (1.275, 2.1)};

    \draw[red] plot[smooth, tension=0.8] coordinates {(7.6, 1.6) (6.1625, 1.93) (4.725, 2.1)};

    \draw[blue, ->] (-0.1625, 1.93) -- (6.1625, 1.93);
    \node[label=above:$\color{blue}{T}$] at (3, 1.93){};

    \draw[brown] plot[smooth, tension=0.8] coordinates {(-1.2, 0.2) (0.1125, 0.525) (1.425, 0.7)};

    \draw[brown] plot[smooth, tension=0.8] coordinates {(7.2, 0.2) (6.1125, 0.5) (4.575, 0.7)};

    \draw[blue, ->] (0.1125, 0.525) -- (6.1125, 0.5);
    \node[label=below:$\color{blue}{T}$] at (3, 0.5){};

    \end{tikzpicture}
    \caption{Cross-sectional perspective for the transport map $T$.}
    \label{cross-sectional-perspective}
\end{figure}
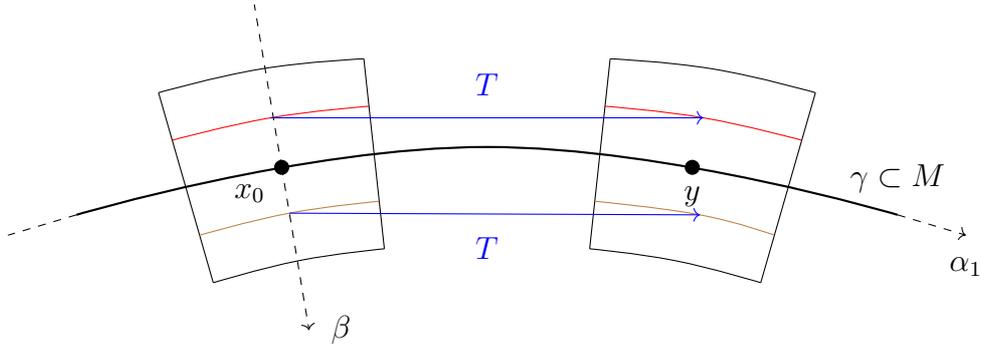

\newpage

\begin{theorem}
\label{surface-coarse-curvature}
Let $M$ be an isometrically embedded surface in $\R^3$, let $x_0$ be a point and $(e_1,e_2)$ an orthonormal basis of principal curvature directions at $x_0$. Let $\gamma$ be a unit speed geodesic in $M$ with $\gamma(0)=x_0$, $\dot{\gamma}(0)=e_1$ and denote $y=\gamma(\delta)$. 
For all $\delta, \varepsilon, \sigma >0$ sufficiently small with $\sigma \vee \varepsilon \leq \f{\delta}{4}$, it holds that
$$
\begin{aligned}
W_1(\mu_{x_0}^{\sigma,\varepsilon}, \mu_{y}^{\sigma,\varepsilon})
&= \|y-x_0\| \left( 1 +\left(\f{\sigma^2}{3} - \f{\varepsilon^2}{8}\right) \<\II_{x_0}(e_1,e_1), H(x_0)\>\right)+ O(\delta^4).
\end{aligned}
$$
\end{theorem}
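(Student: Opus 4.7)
The plan is to follow the same three-step structure used in the planar and space curve cases. First, since $T$ is an approximate transport map of degree~$3$ (\cref{proposed-approximate}) and $\operatorname{diam}\operatorname{supp}\mu_y^{\sigma,\varepsilon} = O(\delta)$, \cref{asymptotically-optimal-transport} reduces the problem to evaluating $W_1(\mu_{x_0}^{\sigma,\varepsilon}, T_*\mu_{x_0}^{\sigma,\varepsilon})$ up to $O(\delta^4)$. Since the principal curvature assumption gives $\II_{12}(\mathbf{0}) = 0$, the derivatives from \cref{derivative-catalogue} produce a relatively clean third-order Taylor expansion of $\phi(\alpha_1,\alpha_2,\beta)$ in the orthonormal frame $(\dot{\gamma}(0),\mathbf{m}(0),\mathbf{n}(\mathbf{0}))$. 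Using that $T\phi(\alpha_1,\alpha_2,\beta) = \phi(\delta-\alpha_1,\alpha_2,\beta) + O(\delta^3)$ in the normal component and expanding, the $\dot{\gamma}(0)$-component of $T\phi(\alpha_1,\alpha_2,\beta) - \phi(\alpha_1,\alpha_2,\beta)$ factors as $(\delta-2\alpha_1)$ times $1 - \beta\,\II_{11}(\mathbf{0}) + \text{(cubic corrections in } \alpha,\beta\text{)}$, while the $\mathbf{m}(0)$ and $\mathbf{n}(\mathbf{0})$ components are of order $\delta^2$. Taking the Euclidean norm via $\sqrt{1+x} = 1 + \tfrac{x}{2} + O(x^2)$, the pairwise distance is
\[
\|T\phi(\alpha,\beta) - \phi(\alpha,\beta)\| = (\delta-2\alpha_1)\Bigl(1 - \beta\,\II_{11}(\mathbf{0}) - \tfrac{\delta^2}{24}\II_{11}(\mathbf{0})^2 + \tfrac{\delta\alpha_1 - \alpha_1^2}{6}\II_{11}(\mathbf{0})^2 + R_3(\alpha,\beta)\Bigr) + O(\delta^4),
\]
where $R_3$ is linear in $\beta$ with coefficients involving first derivatives of $\II$.

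For the upper bound, integrate this expression against $\mu_{x_0}^{\sigma,\varepsilon}$, using the coordinate density $1 - \beta(\II_{11}(\mathbf{0}) + \II_{22}(\mathbf{0})) + r(\alpha,\beta)$ from \cref{surface-measures-coordinates}. Terms odd in $\alpha_i$ or $\beta$ integrate to zero by symmetry (using $\operatorname{supp}\mu_{x_0}^{\sigma,\varepsilon}$ is symmetric in these variables). The surviving second-order terms come from two sources: the product of the $-\beta\,\II_{11}(\mathbf{0})$ factor with the density correction $-\beta(\II_{11}(\mathbf{0}) + \II_{22}(\mathbf{0}))$, which yields (after integrating $\beta^2$ over $(-\sigma,\sigma)$ and dividing by $2\sigma$) a coefficient $\tfrac{\sigma^2}{3}\,\II_{11}(\mathbf{0})(\II_{11}(\mathbf{0})+\II_{22}(\mathbf{0}))$; and the quadratic-in-$\alpha_1$ term $-\tfrac{\alpha_1^2}{6}\II_{11}(\mathbf{0})^2$, which via the disk integral $\frac{1}{\pi\varepsilon^2}\int_{\alpha_1^2+\alpha_2^2\leq\varepsilon^2} \alpha_1^2\,dA = \tfrac{\varepsilon^2}{4}$ together with the $-\tfrac{\delta^2}{24}\II_{11}(\mathbf{0})^2$ term combines to $-\tfrac{\varepsilon^2}{8}\II_{11}(\mathbf{0})^2$ after factoring out $\|y-x_0\| = \delta(1 + O(\delta^2))$. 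Noting $\II_{11}(\mathbf{0})^2 = \langle\II_{x_0}(e_1,e_1),\II_{x_0}(e_1,e_1)\rangle$ and $\II_{11}(\mathbf{0})(\II_{11}(\mathbf{0})+\II_{22}(\mathbf{0})) = \langle\II_{x_0}(e_1,e_1),H(x_0)\rangle$, and using the fact that the $e_2$ direction only contributes the $\II_{22}$ term through the density (since the transport is purely in the $\alpha_1$ direction), produces the claimed expansion.

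For the matching lower bound, take the $1$-Lipschitz test function $f(z) = \langle z - x_0, (y-x_0)/\|y-x_0\|\rangle$. Using the expansion of $y - x_0 = T\phi(\mathbf{0}) - \phi(\mathbf{0})$ in the $(\dot{\gamma}(0),\mathbf{m}(0),\mathbf{n}(\mathbf{0}))$ frame (only $\dot{\gamma}(0)$ and $\mathbf{n}(\mathbf{0})$ components at the relevant orders, since $\gamma$ lies in a plane normal to $\mathbf{m}(0)$ at $x_0$ modulo higher-order corrections), one checks that $f(Tz) - f(z)$ agrees with $\|Tz - z\|$ up to $O(\delta^4)$ pointwise. \cref{test-function-gradient} then gives $W_1(\mu_{x_0}^{\sigma,\varepsilon},T_*\mu_{x_0}^{\sigma,\varepsilon}) = \int\|Tz-z\|\,d\mu_{x_0}^{\sigma,\varepsilon} + O(\delta^4)$, closing the estimate.

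The main obstacle is the bookkeeping: one must verify that every term of orders $\delta^0,\delta^1,\delta^2,\delta^3$ in $\|Tz-z\|\cdot\tfrac{d(\phi_*^{-1}\mu_{x_0}^{\sigma,\varepsilon})}{d\alpha d\beta}$ either vanishes by symmetry, combines to give the $\|y-x_0\|$ prefactor, or produces one of the two stated coefficients. The principal-direction hypothesis $\II_{12}(\mathbf{0})=0$ is crucial to eliminate a would-be cross term $\II_{11}(\mathbf{0})\II_{12}(\mathbf{0})$ coming from $\partial_{\alpha_1}^3\phi(\mathbf{0})$ in the $\mathbf{m}(0)$-direction; verifying the $f(Tz)-f(z) = \|Tz-z\| + O(\delta^4)$ identity also uses this assumption to cancel the $\mathbf{m}(0)$-component contributions.
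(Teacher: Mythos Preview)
Your overall three-step structure matches the paper's, but the upper-bound bookkeeping has a real gap. Your displayed formula for the pointwise transport distance omits the term
\[
-\tfrac{1}{2}\,\II_{11}(\mathbf{0})\,\II_{22}(\mathbf{0})\,\alpha_2^2
\]
in the bracket. This arises from the third derivative $\partial_{\alpha_1}\partial_{\alpha_2}^2\phi(\mathbf{0})$ in \cref{derivative-catalogue}, whose $\dot\gamma(0)$-component is $-\II_{11}(\mathbf{0})\II_{22}(\mathbf{0})$ (even though $\II_{12}(\mathbf{0})=0$). After integrating $\alpha_2^2$ over the disc this gives $-\tfrac{\varepsilon^2}{8}\II_{11}\II_{22}$, which is precisely the $\II_{22}$ part of the $\varepsilon^2$ coefficient. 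Your claim that ``the $e_2$ direction only contributes the $\II_{22}$ term through the density'' is therefore wrong: the density correction $-\beta(\II_{11}+\II_{22})$ is linear in $\beta$ and only interacts with the $-\beta\II_{11}$ term of the distance to produce a $\sigma^2$ (not $\varepsilon^2$) contribution. Relatedly, your derivation of the $-\tfrac{\varepsilon^2}{8}\II_{11}^2$ piece is muddled: the $-\tfrac{\delta^2}{24}\II_{11}^2$ term is absorbed into the prefactor $\|y-x_0\|$, not into the $\varepsilon^2$ coefficient; the correct $\varepsilon^2\II_{11}^2$ contribution comes from $-\tfrac{\alpha_1^2}{6}\II_{11}^2$ \emph{together with} the cross term $(-2\alpha_1)\cdot\tfrac{\delta\alpha_1}{6}\II_{11}^2$, each integrated via $\int\alpha_1^2 = \tfrac{\varepsilon^2}{4}$.

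Your lower-bound test function differs from the paper's: you use the constant direction $(y-x_0)/\|y-x_0\|$, whereas the paper uses the $(\alpha_2,\beta)$-dependent direction $p(\alpha_2,\beta)=\tfrac{\phi(\delta,\alpha_2,\beta)-\phi(0,\alpha_2,\beta)}{\|\phi(\delta,\alpha_2,\beta)-\phi(0,\alpha_2,\beta)\|}$. Your choice has the advantage that $f$ is exactly $1$-Lipschitz, avoiding the paper's lengthy verification that $\|\nabla f\|=1+O(\delta^3)$ via the inverse metric expansion. It does work here (one can check $p(\alpha_2,\beta)-p(0,0)$ has $\dot\gamma(0)$-component $O(\delta^3)$ and transverse components $O(\delta^2)$, so the inner products agree to $O(\delta^4)$), but you should be aware this is \emph{not} the paper's route and requires its own justification rather than an appeal to the pattern of the curve cases.
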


\begin{remark}
\label{pseudo-flat}
If we set $\varepsilon=\f{2\sqrt{2}}{\sqrt{3}}\sigma$, we note that the bracket on the right reduces to 1. This is due to the effects of second fundamental form and the curvature of the submanifold cancelling out, so it would appear in such special case that the coarse extrinsic curvature is flat, even though the second fundamental form may be non-vanishing. Such a special case is due to having an additional degree of freedom because of the additional $\sigma$ parameter and the sign of the $\sigma^2$ term happens to oppose that of the $\varepsilon^2$ term. The extrinsic curvature should thus be seen as encapsulated by varying both $\sigma$ and $\varepsilon$ in $W_1(\mu_{x_0}^{\sigma,\varepsilon}, \mu_y^{\sigma,\varepsilon})$.
\end{remark}

\begin{proof}
The conclusion of \cref{asymptotically-optimal-transport} holds, so we may compute $W_1(\mu_{x_0}^{\sigma,\varepsilon}, T_* \mu_{x_0}^{\sigma,\varepsilon})$ instead. 
For every point $\phi(\alpha_1,\alpha_2,\beta)$, expanding up to third order and using the list of derivatives of \cref{derivative-catalogue}, we collect terms as components of the frame $(\dot{\gamma}, \mathbf{m},\mathbf{n})$ at $\mathbf{0}$,
\begin{align}
&\phi(\alpha_1,\alpha_2,\beta) \nonumber\\
&= x_0 + \sum_i \alpha_i \partial_{\alpha_i} \phi(\mathbf{0}) +\beta \partial_\beta \phi(\mathbf{0}) + \f{1}{2} \sum_{i,j} \alpha_i \alpha_j \partial_{\alpha_i}\partial_{\alpha_j} \phi(\mathbf{0}) + \sum_i \beta \alpha_i \partial_\beta \partial_{\alpha_i} \phi(\mathbf{0}) \nonumber\\
& \quad + \f{1}{6} \sum_{i,j,k} \alpha_i \alpha_j \alpha_k \partial_{\alpha_i} \partial_{\alpha_j} \partial_{\alpha_k} \phi(\mathbf{0}) + \f{1}{2} \sum_{i,j} \beta \alpha_i \alpha_j \partial_{\alpha_i} \partial_{\alpha_j} \partial_\beta \phi (\mathbf{0}) + O(\delta^4) \nonumber \\
&= x_0 + \bigg(\alpha_1 - \II_{11}(\mathbf{0}) \beta \alpha_1 - \f{1}{6}\II_{11}(\mathbf{0})^2 \alpha_1^3 - \f{1}{2}\II_{11}(\mathbf{0}) \II_{22}(\mathbf{0}) \alpha_1 \alpha_2^2  + O(\delta^4) \bigg) \dot{\gamma}(0) \nonumber \\
\label{phi-fermi}
&\qquad + \left( \alpha_2 - \II_{22}(\mathbf{0}) \beta \alpha_2 + O(\delta^3)\right) \mathbf{m}(0) \\
&\qquad + \left( \beta + \f{1}{2} \II_{11}(\mathbf{0}) \alpha_1^2 + \f{1}{2} \II_{22}(\mathbf{0}) \alpha_2^2 + O(\delta^3) \right) \mathbf{n}(0) \nonumber \\
&\qquad + \f{1}{2} \sum_{i,j} \beta \alpha_i \alpha_j \partial_\beta \partial_{\alpha_i} \partial_{\alpha_j} \phi(\mathbf{0}). \nonumber
\end{align}
While the terms $\beta \alpha_i \alpha_j \partial_\beta \partial_{\alpha_i} \partial_{\alpha_j} \phi (\mathbf{0})$ are only of order 3, they are linear in $\beta$, and hence will not influence the integral with respect to $\mu_{x_0}^{\sigma, \varepsilon}$ up to $O(\delta^4)$.
In the same way an expression for the proposed transport 
$$
T(\phi(\alpha_1,\alpha_2,\beta)) = \phi(\delta-\alpha_1,\alpha_2+O(\delta^3),\beta+O(\delta^3))
$$
can be obtained by making corresponding substitutions for the components in the above expression for $\phi(\alpha_1,\alpha_2,\beta)$. Then the pointwise transport vector is 
\begin{equation}
\label{pointwise-transport-surface-vector}
\begin{aligned}
&T(\phi(\alpha_1,\alpha_2,\beta)) - \phi(\alpha_1,\alpha_2,\beta)  \\
&= (\delta-2\alpha_1) \\
& \quad \bigg[ \bigg(1 - \II_{11}(\mathbf{0}) \beta - \f{1}{6} \II_{11}(\mathbf{0})^2 (\delta^2-\delta\alpha_1 +\alpha_1^2) - \f{1}{2} \II_{11}(\mathbf{0}) \II_{22}(\mathbf{0}) \alpha_2^2 + O(\delta^3) \bigg) \dot{\gamma}(0) \\
&\hspace{1cm} + O(\delta^2) \mathbf{m}(0) + \left(\f{1}{2} \II_{11}(\mathbf{0}) \delta + O(\delta^2) \right) \mathbf{n}(0) \\
&\hspace{1cm} + \beta \alpha_2 \partial_\beta \partial_{\alpha_1} \partial_{\alpha_2} \phi(\mathbf{0}) + \f{1}{2} \beta \delta \partial_\beta \partial_{\alpha_1}^2 \phi(\mathbf{0}) + O(\delta^3) \bigg]
\end{aligned}
\end{equation}
and its magnitude is
\begin{equation}
\label{pointwise-transport-surface-magnitude}
\begin{aligned}
&\|T(\phi(\alpha_1,\alpha_2,\beta)) - \phi(\alpha_1,\alpha_2,\beta)\| \\
&= (\delta-2\alpha_1) \bigg( 1 - \II_{11}(\mathbf{0}) \beta - \f{1}{6} \II_{11}(\mathbf{0})^2 (\delta^2-\delta\alpha_1 +\alpha_1^2) -\f{1}{2} \II_{11}(\mathbf{0}) \II_{22}(\mathbf{0}) \alpha_2^2 \\
& \hspace{1.5cm} + \f{1}{8} \II_{11}(\mathbf{0})^2 \delta^2 + \beta \alpha_2 \<\partial_{\alpha_1} \partial_{\alpha_2} \mathbf{n}(\mathbf{0}), \dot{\gamma}(0) \> + \f{1}{2} \beta \delta \< \partial_{\alpha_1}^2 \mathbf{n}(\mathbf{0}), \dot{\gamma}(0)\>\bigg) + O(\delta^4).
\end{aligned}
\end{equation}
Using the density of the test measure $\mu_{x_0}^{\sigma,\varepsilon}$ in Fermi coordinates given by \cref{test-measure-surface}, the upper bound is
$$
\begin{aligned}
    &W_1(\mu_{x_0}^{\sigma,\varepsilon}, T_*\mu_{x_0}^{\sigma,\varepsilon}) \\
    &\leq \int_{B_{\sigma,\varepsilon}} \|Tz-z\| \mu_{x_0}^{\sigma,\varepsilon}(dz) \\
    &= \int_{\tilde{B}_{\sigma,\varepsilon}} \|T(\phi(\alpha,\beta))-\phi(\alpha,\beta)\|  (\phi^{-1}_* \mu_{x_0}^{\sigma,\varepsilon}) (d\alpha, d\beta) + O(\delta^4)\\
    &= \delta \bigg( 1 - \f{1}{24} \II_{11}(\mathbf{0})^2 \delta^2 + \left(\f{1}{3}\II_{11}(\mathbf{0})^2 +\f{1}{3}\II_{11}(\mathbf{0}) \II_{22}(\mathbf{0}) \right) \sigma^2 \\
    &\hspace{1.5cm} - \f{1}{8}\left(\II_{11}(\mathbf{0})^2  + \II_{11}(\mathbf{0}) \II_{22}(\mathbf{0})\right) \varepsilon^2 \bigg) + O(\delta^4) \\
    &= \|x_0-y\| \bigg( 1 + \f{1}{3} \left(\II_{11}(\mathbf{0})^2 + \II_{11}(\mathbf{0}) \II_{22}(\mathbf{0}) \right) \sigma^2 \\
    &\hspace{2.7cm} - \f{1}{8}\left(\II_{11}(\mathbf{0})^2  + \II_{11}(\mathbf{0}) \II_{22}(\mathbf{0}) \right) \varepsilon^2 \bigg) + O(\delta^4).
\end{aligned}
$$
In the third equality, we plugged in for $\|T(\phi(\alpha,\beta)) -\phi(\alpha,\beta)\|$ as computed above and used that terms of odd order in one of $\alpha_1,\alpha_2,\beta$ integrate to 0 and again absorbed higher order terms into $O(\delta^4)$. On the last line, we used that
$$
\|x_0-y\| = \delta \left(1 - \f{1}{24} \II_{11}(\mathbf{0})^2 \delta^2 \right) + O(\delta^4).
$$
We proceed with showing the lower bound. Define
$$
p(\alpha_2,\beta) :=\f{\phi(\delta,\alpha_2,\beta)-\phi(0,\alpha_2,\beta)}{\|\phi(\delta,\alpha_2,\beta)-\phi(0,\alpha_2,\beta)\|}
$$
and the test function
\begin{equation}
\label{test-function-definition}
f(\phi(\alpha_1,\alpha_2,\beta)) := \< \phi(\alpha_1,\alpha_2,\beta) - x_0, p(\alpha_2,\beta)\>
\end{equation}
for the Kantorovich-Rubinstein duality, with the intention of applying \cref{test-function-gradient} to conclude. We first expand
\begin{equation}
\label{projection-vector}
\begin{aligned}
&\phi(\delta,\alpha_2,\beta) -\phi(0,\alpha_2,\beta) \\
&= \delta \left[ \bigg( 1 - \II_{11}(\mathbf{0}) \beta - \f{1}{6} \II_{11}(\mathbf{0}) \delta^2 - \f{1}{2} \II_{11}(\mathbf{0}) \II_{22}(\mathbf{0}) \alpha_2^2 \right.\\
&\qquad \quad + \f{1}{3} \beta \delta \<\partial_{\alpha_1}^2 \mathbf{n}(\mathbf{0}), \dot{\gamma}(0)\> + \f{1}{3} \beta \alpha_2 \<\partial_{\alpha_1} \partial_{\alpha_2} \mathbf{n}(\mathbf{0}), \dot{\gamma}(0)\> + O(\delta^3) \bigg) \dot{\gamma}(0) \\
& \qquad \quad + \left. O(\delta^2) \mathbf{m}(0) + \left( \f{1}{2} \II_{11}(\mathbf{0}) \delta  + O(\delta^2) \right) \mathbf{n}(0)\right].
\end{aligned}
\end{equation}
Deduce 
$$
\begin{aligned}
& \|\phi(\delta,\alpha_2,\beta) - \phi(0,\alpha_2,\beta)\| \\
& = \delta \bigg( 1 - \II_{11}(\mathbf{0}) \beta - \f{1}{24} \II_{11}(\mathbf{0})^2 \delta^2 - \f{1}{2} \II_{11}(\mathbf{0}) \II_{22}(\mathbf{0}) \alpha_2^2 \\
& \qquad + \f{1}{3} \beta \delta \<\partial_{\alpha_1}^2 \mathbf{n}(\mathbf{0}), \dot{\gamma}(0)\> + \f{1}{3} \beta \alpha_2 \<\partial_{\alpha_1} \partial_{\alpha_2} \mathbf{n}(\mathbf{0}), \dot{\gamma}(0)\> \bigg) + O(\delta^4),
\end{aligned}
$$
and therefore
\begin{equation}
\label{normalized-projection-vector}
\begin{aligned}
p(\alpha_2,\beta) &= \left(1- \f{1}{8} \II_{11}(\mathbf{0})^2 \delta^2 + O(\delta^3)\right)\dot{\gamma}(0) \\
& \quad + O(\delta^2) \mathbf{m}(0) + \left( \f{1}{2} \II_{11}(\mathbf{0}) \delta + O(\delta^2)\right) \mathbf{n}(0).
\end{aligned}
\end{equation}
Then it can be verified using expansions \eqref{pointwise-transport-surface-vector} and \eqref{normalized-projection-vector} to compute the inner product that
$$
\begin{aligned}
f(Tz)-f(z)
&= \< T(\phi(\alpha_1,\alpha_2,\beta))-\phi(\alpha_1,\alpha_2,\beta),p(\alpha_2,\beta)\> \\
&= \|T(\phi(\alpha_1,\alpha_2,\beta)) -\phi(\alpha_1,\alpha_2,\beta)\| + O(\delta^4).
\end{aligned}
$$
by comparison with \eqref{pointwise-transport-surface-magnitude}.

It remains to show that the magnitude of the gradient of $f$ satisfies
\begin{equation}
\label{gradient-magnitude}
\sup_{z \in B_{2\delta}(x_0)}\|\nabla f(z)\| = 1 + O(\delta^3).
\end{equation}
For this we need to expand the inverse matrix of the metric in Fermi coordinates. Using the expansion \eqref{phi-fermi}, compute
$$
\begin{aligned}
    \partial_{\alpha_1}\phi(\alpha_1,\alpha_2,\beta) 
    &= \bigg(1-\II_{11}(\mathbf{0}) \beta - \f{1}{2} \II_{11}(\mathbf{0})^2 \alpha_1^2 - \f{1}{2} \II_{11}(\mathbf{0}) \II_{22}(\mathbf{0}) \alpha_2^2 \\
    &\qquad + \f{1}{3} \beta \alpha_1 \< \partial_{\alpha_1}^2 \mathbf{n}(\mathbf{0}), \dot{\gamma}(0)\> + \f{1}{3} \beta \alpha_2 \< \partial_{\alpha_1} \partial_{\alpha_2} \mathbf{n}(\mathbf{0}), \dot{\gamma}(0)\> + O(\delta^3)\bigg)\dot{\gamma}(0) \\
    &\quad + O(\delta^2) \mathbf{m}(0) + \left( \II_{11}(\mathbf{0}) \alpha_1 + O(\delta^2) \right) \mathbf{n}(\mathbf{0}), \\
    \partial_{\alpha_2}\phi(\alpha_1,\alpha_2,\beta) 
    &= O(\delta^2) \dot{\gamma}(0) + \left(1- \II_{22}(\mathbf{0}) \beta + O(\delta^2) \right) \mathbf{m}(0)  + \left(\II_{22}(\mathbf{0}) \alpha_2 + O(\delta^2)\right) \mathbf{n}(\mathbf{0}),\\
    \partial_\beta \phi (\alpha_1,\alpha_2,\beta) 
    &= -\left( \II_{11}(\mathbf{0}) \alpha_1 + O(\delta^2)\right) \dot{\gamma}(0) - \left(\II_{22}(\mathbf{0}) \alpha_2 + O(\delta^2)\right) \mathbf{m}(0) \\
    &\quad + (1+ O(\delta^2))\mathbf{n}(\mathbf{0}).
    \end{aligned}
$$
We shall label the term
$$
r(\alpha) := \f{1}{3} \alpha_1 \< \partial_{\alpha_1}^2 \mathbf{n}(\mathbf{0}), \dot{\gamma}(0)\> + \f{1}{3} \alpha_2 \< \partial_{\alpha_1} \partial_{\alpha_2} \mathbf{n}(\mathbf{0}), \dot{\gamma}(0)\>.
$$
Then the metric matrix has the shape
$$
G = 
\begin{pmatrix}
    g_{11} & g_{12} & 0 \\
    g_{21} & g_{22} & 0 \\
    0 & 0 & 1
\end{pmatrix}
$$
with
$$
\begin{aligned}
g_{11} &= 1 - 2\II_{11}(\mathbf{0}) \beta + \II_{11}(\mathbf{0})^2 \beta^2 - \II_{11}(\mathbf{0}) \II_{22}(\mathbf{0}) \alpha_2^2 + 2\beta r(\alpha) + O(\delta^3), \\
g_{22} &= 1 - 2 \II_{22}(\mathbf{0}) \beta + O(\delta^2),\\
g_{12} &= O(\delta^2).
\end{aligned}
$$
Note that the matrix is of the form
$$
G = I + A
$$
with $A=O(\delta)$, which means the expansion of its inverse is
$$
G^{-1} = I-A +A^2 + O(\delta^3).
$$
We compute
$$
A^2 = 4\beta^2 
\begin{pmatrix}
    \II_{11}(\mathbf{0})^2  & 0 & 0\\
    0 & \II_{22}(\mathbf{0})^2 & 0 \\
    0 & 0 & 0
\end{pmatrix}
+ O(\delta^3),
$$
and thus
$$
\begin{aligned}
g^{11} &= 1 + 2 \II_{11}(\mathbf{0}) \beta + 3\II_{11}(\mathbf{0})^2 \beta^2 + \II_{11}(\mathbf{0}) \II_{22}(\mathbf{0})\big) \alpha_2^2 - 2\beta r(\alpha) + O(\delta^3),\\
g^{12} &= O(\delta^2),\\
g^{22} &= 1 + 2 \beta \II_{22}(\mathbf{0}) + O(\delta^2).
\end{aligned}
$$
From \eqref{normalized-projection-vector} we deduce the derivatives of the projection vector field in coordinates are
$$
\begin{aligned}
\partial_{\alpha_2} p(\alpha_2,\beta) &= O(\delta^2)\dot{\gamma}(0) +O(\delta) \mathbf{m}(0) + O(\delta) \mathbf{n}(0),\\
\partial_\beta p(\alpha_2,\beta) &= O(\delta^2)\dot{\gamma}(0) + O(\delta) \mathbf{m}(0) +O(\delta) \mathbf{n}(0).
\end{aligned}
$$
Then the first derivatives of the test function defined in \eqref{test-function-definition} are
$$
\begin{aligned}
    \partial_{\alpha_1} (f \circ \phi)(\alpha,\beta) &= \<\partial_{\alpha_1} \phi(\alpha_1,\alpha_2,\beta), p(\alpha_2,\beta)\>\\
    &= 1- \II_{11}(\mathbf{0}) \beta - \f{1}{2} \II_{11}(\mathbf{0})^2 \alpha^2_1 - \f{1}{2} \II_{11}(\mathbf{0}) \II_{22}(\mathbf{0}) \alpha_2^2 \\
    & \quad - \f{1}{8} \II_{11}(\mathbf{0})^2 \delta^2 + \f{1}{2} \II_{11}(\mathbf{0})^2 \delta\alpha_1 + \beta r(\alpha) + O(\delta^3),\\
    \partial_{\alpha_2} (f \circ \phi)(\alpha,\beta) &= \< \partial_{\alpha_2}\phi(\alpha_1,\alpha_2,\beta), p(\alpha_2,\beta)\> \\
    &\quad + \< \phi(\alpha_1,\alpha_2,\beta) - x_0, \partial_{\alpha_2} p(\alpha_2,\beta)\>\\
    &= O(\delta^2)\\
    \partial_{\beta} (f \circ \phi)(\alpha,\beta) &= \< \partial_\beta \phi(\alpha_1,\alpha_2,\beta), p(\alpha_2,\beta)\> \\
    &\quad + \< \phi(\alpha_1,\alpha_2,\beta) - x_0, \partial_\beta p(\alpha_2,\beta)\>\\
    &=  -\II_{11}(\mathbf{0})\alpha_1 + \f{1}{2}\II_{11}(\mathbf{0}) \delta + O(\delta^2).
\end{aligned}
$$
Then the magnitude of the gradient is
$$
\begin{aligned}
    \|\nabla f(\phi(\alpha_1,\alpha_2,\beta) \|^2 &= (g^{11}\circ \phi)(\partial_{\alpha_1}(f\circ \phi))^2 + 2 (g^{12}\circ \phi)\partial_{\alpha_1}(f\circ \phi)\partial_{\alpha_2} (f \circ \phi) \\
    & \quad + (g^{22}\circ \phi)(\partial_{\alpha_2}(f\circ \phi))^2+ (\partial_\beta (f \circ \phi))^2,
\end{aligned}
$$
and we find the individual summands
$$
\begin{aligned}
(g^{11}\circ \phi)(\partial_{\alpha_1}(f \circ \phi))^2 &= 1 + \left(-\alpha_1^2 + \delta \alpha_1 -\f{1}{4}\delta^2\right) \II_{11}(\mathbf{0})^2  + O(\delta^3),\\
(g^{12} \circ \phi) \partial_{\alpha_1} (f\circ \phi) \partial_{\alpha_2}(f \circ \phi) &= O(\delta^3),\\
(g^{22} \circ \phi) (\partial_{\alpha_2}(f \circ \phi))^2 &= O(\delta^3),\\
(g^{33} \circ \phi) (\partial_{\beta}(f \circ \phi))^2 &= \left(\alpha_1^2 -\delta\alpha_1 + \f{1}{4}\delta^2\right) \II_{11}(\mathbf{0})^2 + O(\delta^3),
\end{aligned}
$$
which indeed gives
$$
\|\nabla f(\phi(\alpha_1,\alpha_2,\beta) \| = 1+O(\delta^3)
$$
as the first and second order terms cancel out.
Hence \cref{test-function-gradient} applies and we conclude the lower bound coincides up to $O(\delta^4)$ with the upper bound.
\end{proof}

\section{General Riemannian submanifolds}
\label{section:riemannian-submanifolds}
We now consider a Riemannian submanifold $M$ of arbitrary dimension $m$ and codimension $k$ embedded isometrically in $\R^{m+k}$. Theorems \ref{space-curve} and \ref{surface-coarse-curvature} are thus special cases of \cref{general-submanifold} below.
We begin by defining an orthonormal frame of $\R^{m+k}$-valued vector fields on a sufficiently small open domain $U$ in the submanifold $M$, which is used to define the Fermi coordinates on $U$ in this general setting.

\subsection{Frame extension}
We take the ambient manifold to be $\R^{m+k}$.
Recall the second fundamental form at a point $x \in M$ is
$$
\II_x(w_1,w_2) = \nabla^{\R^{m+k}}_{w_1} W - \nabla^M_{w_1} W \quad \forall w_1,w_2 \in T_{x}M,
$$
where $W$ is an arbitrary local vector field on $M$ with $W(x) = w_2$. The mean curvature at $x$ is
$$
H(x) = \sum_{j=1}^m \II_x(e_j,e_j) 
$$
for an arbitrary orthonormal basis $(e_j)_{j=1}^m$ of $T_xM$. Both $\II_{x}(w_1,w_2)$ and $H(x)$ are normal to the submanifold, i.e. 
$$
\<\II_x(w_1,w_2), u\> = \< H(x), u\> = 0 \quad \forall u \in T_xM.
$$
Recall from \cref{definition-fermi-coordinates} that the Fermi coordinates in $M$ along $\gamma$ are given by
$$
\psi(\alpha) = \exp_{M, \gamma(\alpha_1)}\left( \sum_{j=2}^m \alpha_j e_j(\alpha_1)\right),
$$
where $(e_j(\alpha_1)_{j=1}^m$ is the parallel transport along $\gamma$ of an orthonormal basis $(e_j(0))_{j=1}^m$ of $T_{x_0}M$ with $e_1(0)=\dot{\gamma}(0)$. We refer back to Section 2 for properties of the Fermi chart.
 
Denote $\hat{\alpha} = (\alpha_2,\ldots,\alpha_m)$ so that $\alpha=(\alpha_1,\hat{\alpha})$. Extend the frame $(e_j(\alpha_1))_{j=1}^m$ defined along $\alpha_1 \mapsto \gamma(\alpha_1)$ to $U \subset M$ by imposing 
$$
\f{D}{ds} e_j(\alpha_1, s\hat{\alpha}) = 0,
$$
i.e. by parallel translating in $M$ along the geodesic $s \mapsto \psi(\alpha_1, s\hat{\alpha})$.

Given an initial orthonormal basis $(\mathbf{n}_i)_{i=1}^k$ of $T_{x_0}M^\perp$, first extend it to a frame along $\gamma$ by requiring that
$$
\partial_{\alpha_1} \<\mathbf{n}_i(\alpha_1), e_j(\alpha_1)\> = 0 \quad \textrm{and} \quad \left(\partial_{\alpha_1} \mathbf{n}_i(\alpha_1) \right)^\perp = 0 \quad \forall \alpha_1 \in (-\delta_0,\delta_0).
$$
The first requirement implies
$$
\begin{aligned}
\<\partial_{\alpha_1} \mathbf{n}_i(\alpha_1), e_j(\alpha_1)\> = - \< \mathbf{n}_i(\alpha_1), \partial_{\alpha_1} e_j(\alpha_1) \> = - \< \mathbf{n}_i(\alpha_1), \II(e_1(\alpha_1),e_j(\alpha_1)) \>
\end{aligned}
$$
which together with the second requirement implies the first order ODE
\begin{equation}
\begin{aligned}
\label{normal-frame-dynamics}
\partial_{\alpha_1} \mathbf{n}_i(\alpha_1) &= - \sum_{j=1}^m \< \mathbf{n}_i(\alpha_1), \partial_{\alpha_1} e_j(\alpha_1) \> e_j(\alpha_1) \\
&= - \sum_{j=1}^m\< \mathbf{n}_i(\alpha_1), \II(e_1(\alpha_1),e_j(\alpha_1)) \> e_j(\alpha_1).
\end{aligned}
\end{equation}
The solution exists and is unique by standard ODE theory.
Having defined the frame $(\mathbf{n}_i(\alpha_1))_{i=1}^k$ along the geodesic $\alpha_1 \mapsto \gamma(\alpha_1)$, we may also extend it to the submanifold by requiring that for every $\alpha_1 \in (-\delta_0,\delta_0)$ and $\hat{\alpha} \in \tilde{B}^{m-1}_{\varepsilon_0}$,
$$
\f{d}{ds} \<\mathbf{n}_i(\alpha_1, s\hat{\alpha}), e_j(\alpha_1, s\hat{\alpha}) \> = 0 \quad \textrm{and} \quad \left( \f{d}{ds} \mathbf{n}_i(\alpha_1, s\hat{\alpha}) \right)^\perp = 0 \quad \forall s \in [0,1].
$$
Similarly to the above, the first requirement implies that for all $j =1,\ldots,m$,
$$
\begin{aligned}
\<\f{d}{ds} \mathbf{n}_i(\alpha_1, s\hat{\alpha}), e_j(\alpha_1, s\hat{\alpha})\> &= - \< \mathbf{n}_i(\alpha_1, s\hat{\alpha}), \f{d}{ds} e_j(\alpha_1, s\hat{\alpha}) \>,
\end{aligned}
$$
and from the second requirement we conclude the frame satisfies the first order ODE
\begin{equation}
\label{normal-frame-ode}
\f{d}{ds} \mathbf{n}_i(\alpha_1, s\hat{\alpha}) = - \sum_{j=1}^m  \< \mathbf{n}_i(\alpha_1, s\hat{\alpha}), \f{d}{ds} e_j(\alpha_1, s\hat{\alpha}) \>  e_j(\alpha_1,s\hat{\alpha})
\end{equation}
along each geodesic $s \mapsto \phi(\alpha_1, s\hat{\alpha})$ in $M$.

With these concrete vector fields, recall the Fermi coordinates in $\R^{m+k}$ along $\gamma$ adapted to the submanifold $M$ were defined in \cref{definition-fermi-coordinates} as
$$
\phi(\alpha,\beta) = \psi(\alpha) + \sum_{i=1}^k \beta_i \mathbf{n}_i(\alpha)
$$
and note that $\phi(\alpha,\mathbf{0}) = \psi(\alpha)$.

For every $\alpha_1 \in (-\delta_0,\delta_0)$, the map $\psi(\alpha_1, \cdot): \tilde{B}^{m-1}_{\varepsilon_0} \rightarrow M$ is the exponential chart on its image. It is known that the Christoffel symbols vanish at the centre for such charts, i.e.
$$
\nabla^M_{\partial_{\alpha_i} \psi} \partial_{\alpha_j}\psi(\alpha_1,\mathbf{0}) = 0 \quad \forall i,j=2,\ldots,m.
$$
Moreover, since $\partial_{\alpha_j} \psi(\alpha_1,0) = e_j(\alpha_1)$ for $j=1,\ldots,m$ is parallel transport of $e_j(0)$ along $\gamma$, also
$$
\nabla^M_{\partial_{\alpha_1}\psi} \partial_{\alpha_j} \psi(\alpha_1,\mathbf{0}) = 0 \quad \forall j=1,\ldots,m,
$$
noting that $\dot{\gamma}(\alpha_1) = \partial_{\alpha_1} \psi(\alpha_1,\mathbf{0})$.

Denote the components of the second fundamental form with respect to the Fermi coordinates as
\begin{equation}
\label{fermi-second-fundamental-form}
\II_{i j\ell}(\alpha) = \< \partial_{\alpha_j}\partial_{\alpha_\ell} \psi(\alpha) - \nabla^M_{\partial_{\alpha_j} \psi} \partial_{\alpha_\ell} \psi(\alpha), \mathbf{n}_i(\alpha)\>.
\end{equation}
Note that the first index represents the normal direction and the latter two represent manifold directions. Then we can write for every $j,\ell=1,\ldots,m$,
\begin{equation}
\label{general-second-derivatives}
\begin{aligned}
\partial_{\alpha_j} \partial_{\alpha_\ell} \psi(\alpha_1,\mathbf{0}) = \partial_{\alpha_j} \partial_{\alpha_\ell} \psi(\alpha_1,\mathbf{0}) - \nabla^M_{\partial_{\alpha_j} \psi} \partial_{\alpha_\ell}\psi(\alpha_1,\mathbf{0}) =  \sum_{i=1}^k \II_{ij \ell}(\alpha_1,\mathbf{0}) \mathbf{n}_i(\alpha_1,\mathbf{0}).
\end{aligned}
\end{equation}
In addition, \eqref{normal-frame-dynamics} can be written as
$$
\partial_{\alpha_1} \mathbf{n}_i(\alpha_1)  =- \sum_{j=1}^m \II_{ij1}(\alpha_1) e_j(\alpha_1).
$$
Thus the third derivatives with at least one in $\alpha_1$ are
\begin{equation}
\label{general-third-derivatives}
\begin{aligned}
\partial_{\alpha_1} \partial_{\alpha_r} \partial_{\alpha_\ell} \psi(\alpha_1,\mathbf{0})
 &= \sum_{i=1}^k \partial_{\alpha_1} (\II_{ir\ell}(\alpha_1,\mathbf{0})) \mathbf{n}_i(\alpha_1,\mathbf{0}) \\
&\quad - \sum_{i=1}^k \sum_{j=1}^m \II_{ir\ell}(\alpha_1,\mathbf{0}) \II_{ij1}(\alpha_1,\mathbf{0}) e_j(\alpha_1).
\end{aligned}
\end{equation}

\subsection{Main theorem}
In the statement of the theorem, $\II_{x_0}(w_1,w_2)$ is the vector of second fundamental form. In the proof exclusively, $\II_{ij}(\alpha,\beta)$ denotes the $ij$-component of the second fundamental form with respect to the Fermi frame at Fermi coordinates $\alpha,\beta$.

\coarseextrinsiccurvature

\begin{remark}
    \label{rmk:special-cases}
We point out two special cases:
\begin{itemize}
\item If the submanifold has dimension 1 then the condition on the second fundamental form is trivially satisfied as there are no submanifold directions other than that of the curve itself. In this case
$$
H(x_0) = \II_{x_0}(e_1,e_1) = \nabla^{\R^{m+k}}_{\dot{\gamma}} \dot{\gamma}(0) = \ddot{\gamma}(0),
$$
and hence $\<\II_{x_0}(e_1,e_1),H(x_0)\> = \|\ddot{\gamma}(0)\|^2$. This is the square curvature of the curve and for $m=1$, $k=2$ agrees with \cref{space-curve}.
\item If the submanifold has codimension 1 with a normal vector field $\mathbf{n}$ on the submanifold, then the orthonormal eigenbasis of $\<\II_{x_0}(\cdot,\cdot),\mathbf{n}(x_0)\>$ satisfies the condition $\II_{x_0}(e_i,e_j) = \mathbf{0}$ for $i\neq j$. Such a basis always exists as $\II_{x_0}$ is symmetric and consists of the so-called principal curvature directions. Thus for $m=2$, $k=1$, we obtain \cref{surface-coarse-curvature} as a special case.

\item In general codimension, however, such a basis may not exist for a general submanifold, hence the assumption on the second fundamental form needs to be made and is highly restrictive.

If this assumption was dropped, the upper bound for the Wasserstein distance via the proposed transport map would still apply. However, the computation of the lower bound using a projection plane, as done in the proof of \cref{surface-coarse-curvature} and applied again in the proof below, would yield additional lower order terms not agreeing with the upper bound. This is symptomatic of the non-optimality of the transport map up to third order. The more general computation including the off-diagonal terms to show this is straightforward but rather lengthy and is thus omitted.

Qualitatively, the issue is that the off-diagonal terms of the second fundamental form introduce a deformation of the supports of the test measures which is not easily remedied and leaves the fully general case open. The deformation arises because the principal curvature directions above the reference point $x_0$ for each leaf of the foliation of the tubular neighbourhood change their vertical alignment as we consider leaves further away from the base submanifold $M$. On the other hand, the diagonal assumption on the second fundamental form ensures an aligned stacking of principal curvature directions of leaves above $x_0$, leading to the favourable cylinder-like support of the test measures.

\item For the interpretation of the special case of the parameters $\varepsilon=\sqrt{\f{2(m+2)}{k+2}}\sigma$, we refer back to \cref{pseudo-flat}.
\end{itemize}
\end{remark}

\begin{proof}[Proof of \cref{general-submanifold}]
Expand the Fermi chart up to and including third order as
$$
\begin{aligned}
    \phi(\alpha,\beta) &= x_0 + \sum_{j=1}^m \alpha_j \partial_{\alpha_j} \phi(\mathbf{0}) + \sum_{i=1}^k \beta_i \partial_{\beta_i} \phi(\mathbf{0}) + \f{1}{2} \sum_{j,\ell=1}^m \alpha_j \alpha_\ell \partial_{\alpha_j} \partial_{\alpha_\ell} \phi (\mathbf{0}) \\
    & \quad + \f{1}{6} \sum_{i,j,\ell=1}^m \alpha_i \alpha_j \alpha_\ell \partial_{\alpha_i} \partial_{\alpha_j} \partial_{\alpha_\ell} \phi(\mathbf{0}) + \sum_{i=1}^k \sum_{j=1}^m \beta_i \alpha_j \partial_{\beta_i} \partial_{\alpha_j} \phi(\mathbf{0})\\
    &\quad + \f{1}{2} \sum_{i=1}^k \sum_{j,\ell=1}^m \beta_i \alpha_j \alpha_\ell \partial_{\beta_i} \partial_{\alpha_j} \partial_{\alpha_\ell} \phi(\mathbf{0}) + O(\delta^4) .
    \end{aligned}
    $$
From the definition of the Fermi chart and \eqref{general-second-derivatives}, \eqref{general-third-derivatives}, we have the derivatives at the origin on the right hand side:
$$
\begin{aligned}
\partial_{\alpha_j} \phi(\mathbf{0}) &= e_j(0), \quad \partial_{\beta_i} \phi(\mathbf{0}) = \mathbf{n}_i(\mathbf{0}),\\
\partial_{\alpha_j} \partial_{\alpha_\ell} \phi (\mathbf{0})&=\sum_{i=1}^m\II_{i j \ell} (\mathbf{0}) \mathbf{n}_i(\mathbf{0}), \\
 \partial_{\alpha_1} \partial_{\alpha_j} \partial_{\alpha_\ell} \phi(\mathbf{0})
& =\sum_{i=1}^k \partial_{\alpha_1} (\II_{ij \ell}(\alpha_1,\mathbf{0})) \mathbf{n}_i(\alpha_1,\mathbf{0}) 
- \sum_{i=1}^k \sum_{r=1}^m \II_{ij\ell}(\mathbf{0}) \II_{ir 1}(\mathbf{0}) e_r(0).
\end{aligned}
$$
With these we obtain: 
    $$
\begin{aligned}
    \phi(\alpha,\beta) 
    &= x_0 + \sum_{j=1}^m \alpha_j e_j(0) + \sum_{i=1}^k \beta_i \mathbf{n}_i(\mathbf{0}) + \f{1}{2} \sum_{i=1}^k \sum_{r,\ell=1}^m \alpha_r \alpha_\ell \II_{i r \ell} (\mathbf{0}) \mathbf{n}_i(\mathbf{0}) \\
    &\quad - \f{1}{6} \sum_{j=1}^m \alpha_1^3 \II_{i11}(\mathbf{0}) \II_{ij1}(\mathbf{0}) e_j(0) - \f{1}{2} \sum_{j=1}^m \sum_{\ell=2}^m \alpha_1^2 \alpha_\ell \II_{i\ell 1}(\mathbf{0}) \II_{ij1}(\mathbf{0}) e_j(0) \\
    & \quad - \f{1}{2} \sum_{j=1}^m \sum_{\ell, r=2}^m \alpha_1 \alpha_r \alpha_\ell \II_{ir\ell}(\mathbf{0}) \II_{ij1}(\mathbf{0}) e_j(0) + \f{1}{6} \sum_{i,j,\ell=2}^m \alpha_i \alpha_j \alpha_\ell \partial_{\alpha_i} \partial_{\alpha_j} \partial_{\alpha_\ell} \phi(\mathbf{0})\\
    &\quad - \sum_{r=1}^k \sum_{\ell, j=1}^m \beta_r \alpha_\ell \II_{r \ell j}(\mathbf{0}) e_j(0) + \f{1}{2} \sum_{r=1}^k \sum_{l,q=1}^m \beta_r \alpha_\ell \alpha_q \partial_{\alpha_\ell} \partial_{\alpha_q} \mathbf{n}_r(\mathbf{0}) + O(\delta^4).
    \end{aligned}
    $$
    In the above, the sum of third derivative terms in $\alpha$ was split into those that involve at least one power in $\alpha_1$, for which we have a formula, and those that don't.
    The other third derivatives $\partial_{\alpha_i} \partial_{\alpha_r} \partial_{\alpha_\ell} \phi(\mathbf{0})$ for $i,r,\ell \geq 2$ are not easily written in Fermi coordinates, but will not be needed for our computations.
    Rearranging the terms, we write $\phi$ in terms of the basis $(e_1(0),\ldots,e_m(0)$, $\mathbf{n}_1(\mathbf{0}),\ldots, \mathbf{n}_k(\mathbf{0}))$ and apply the assumption $\II_{ij1}(\mathbf{0})=0$:
    \begin{equation}
    \label{general-fermi-chart-expansion}
    \begin{aligned}
   \phi(\alpha,\beta) 
    &= x_0 + \bigg( \alpha_1 - \sum_{r=1}^k \beta_r \alpha_1 \II_{r 1 1}(\mathbf{0}) - \f{1}{6}  \alpha_1^3 \sum_{i=1}^k \II_{i11}(\mathbf{0})^2 \\
    & \hspace{1.3cm} - \f{1}{2} \sum_{\ell, r=2}^m \alpha_1 \alpha_r \alpha_\ell \II_{ir\ell}(\mathbf{0}) \II_{i11}(\mathbf{0}) + \f{1}{6} \sum_{i,j,\ell=2}^m \alpha_i \alpha_j \alpha_\ell \<\partial_{\alpha_i} \partial_{\alpha_j} \partial_{\alpha_\ell} \phi(\mathbf{0}), e_1(0) \>  \\
    & \hspace{1.3cm} \left. + \f{1}{2} \sum_{r=1}^k \sum_{l,q=1}^m \beta_r \alpha_\ell \alpha_q \< \partial_{\alpha_\ell} \partial_{\alpha_q} \mathbf{n}_r(\mathbf{0}),e_1(0) \> + O(\delta^4)\right) e_1(0) \\
    & \hspace{1.3cm} + \sum_{j=2}^m \left(\alpha_j - \sum_{r=1}^k \sum_{\ell=2}^m \beta_r \alpha_\ell \II_{r \ell j}(\mathbf{0}) + O(\delta^3) \right) e_j(0) \\
    & \hspace{1.3cm} + \sum_{i=1}^k \left(\beta_i + \f{1}{2} \alpha_1^2 \II_{i11}(\mathbf{0})+ \f{1}{2} \sum_{r,\ell=2}^m \alpha_r \alpha_\ell \II_{i r \ell} (\mathbf{0})  + O(\delta^3)\right) \mathbf{n}_i(\mathbf{0}).
\end{aligned}
\end{equation}
We will henceforth denote
$$
r_i(\alpha):= \f{1}{2} \alpha_1 \< \partial_{\alpha_1}^2 \mathbf{n}_i(\mathbf{0}), e_1(0)\> + \sum_{\ell=2}^m \alpha_\ell \< \partial_{\alpha_\ell} \partial_{\alpha_1} \mathbf{n}_i(\mathbf{0}),e_1(0) \>.
$$

Let $T$ be the transport map defined in \cref{proposed-transport-map}. With asymptotic notation for the third order terms,
$$
T(\phi(\alpha_1,\hat{\alpha},\beta)) = \phi(\delta-\alpha_1,\hat{\alpha}, \beta+O(\delta^3)).
$$
In the expansion of $\phi$ above, from the third derivatives in $\alpha$ we only needed to specify those involving $\alpha_1$, because the transport map $T$ changes only the first coordinate up to $O(\delta^3)$. These derivatives were given by \eqref{general-third-derivatives}.
Then the pointwise transport vector is 
\begin{equation}
\label{pointwise-transport-vector}
\begin{aligned}
&T(\phi(\alpha,\beta)) - \phi(\alpha, \beta)\\
&= \phi(\delta-\alpha_1, \hat{\alpha}+O(\delta^3), \beta + O(\delta^3)) - \phi(\alpha,\beta)\\
&= (\delta - 2\alpha_1) \bigg[
\bigg(
1 - \f{1}{6} (\delta^2 - \delta \alpha_1 + \alpha_1^2) \sum_{i=1}^k \II_{i11}(\mathbf{0})^2 - \f{1}{2} \sum_{i=1}^k \sum_{r,\ell=2}^m \alpha_r \alpha_\ell \II_{i r \ell} (\mathbf{0}) \II_{i11}(\mathbf{0}) \\
& \hspace{3cm} - \sum_{i=1}^k \beta_i \II_{i11}(\mathbf{0}) + \sum_{i=1}^k \beta_i r_i(\alpha) + O(\delta^3) 
\bigg) e_1(0) \\
& \hspace{2cm} +\sum_{j=2}^m  O(\delta^2) e_j(0) + \sum_{i=1}^k \left( \f{\delta}{2} \II_{i11}(\mathbf{0}) + O(\delta^2) \right) \mathbf{n}_i(\mathbf{0})
\bigg].
\end{aligned}
\end{equation}
Therefore, using the expansion $\sqrt{1+x}=1+\f{1}{2}x-\f{1}{8}x^2 + O(x^3)$, the pointwise transport distance is
\begin{equation}
\label{pointwise-transport-magnitude}
\begin{aligned}
& \|T(\phi(\alpha,\beta) - \phi(\alpha,\beta)\| \\
&= (\delta - 2\alpha_1) \bigg( 1 -\f{1}{6} (\delta^2 - \delta \alpha_1 + \alpha_1^2) \sum_{i=1}^k \II_{i11}(\mathbf{0})^2 - \f{1}{2} \sum_{i=1}^k \sum_{r,\ell=2}^m \alpha_r \alpha_\ell \II_{i r \ell} (\mathbf{0}) \II_{i11}(\mathbf{0}) \\
& \hspace{2.7cm} + \f{\delta^2}{8} \sum_{i=1}^k \II_{i11}(\mathbf{0})^2 - \sum_{i=1}^k \beta_i \II_{i11}(\mathbf{0}) + \sum_{i=1}^k \beta_i r_i(\alpha) + O(\delta^3)
\bigg).
\end{aligned}
\end{equation}
\cref{test-measures-in-fermi-coordinates} expressed the density of the test measure $\mu_{x_0}^{\sigma,\varepsilon}$ in Fermi coordinates up to second order. Denoting the second order remainder of the density as $r(\alpha,\beta)$, the density simplifies to give
\begin{equation}
\label{test-function-density}
\begin{aligned}
&(\phi_*^{-1} \mu_{x_0}^{\sigma,\varepsilon})(d\alpha,d\beta) \\
&= \f{\mathbbm{1}_{\tilde{B}_{\sigma,\varepsilon}}(\alpha,\beta)}{\int_{\tilde{B}_{\sigma,\varepsilon}} (1+r(\alpha', \beta')) (\phi_*^{-1} \mu_{x_0}^{\sigma,\varepsilon})(d\alpha',d\beta')} \left(1 - \sum_{i=1}^k \sum_{j=1}^m \beta_i \II_{ijj}(\mathbf{0}) + r(\alpha, \beta) \right)d\alpha d\beta,
\end{aligned}
\end{equation}
where the form of the normalizing factor in the denominator is deduced from the two facts 
$$
\int_{\tilde{B}_{\sigma,\varepsilon}} \sum_{i=1}^k \sum_{j=1}^m \beta_i \II_{ijj}(\mathbf{0}) d(\phi^{-1}_* \mu_{x_0}^{\sigma,\varepsilon})(\alpha,\beta)=0, \quad \int_{\tilde{B}_{\sigma,\varepsilon}} d(\phi^{-1}_* \mu_{x_0}^{\sigma,\varepsilon})(\alpha,\beta)=1.
$$
We deduce the upper bound in the statement of \cref{general-submanifold} by computing the integral on the right side of the inequality
$$
W_1(\mu_{x_0}^{\sigma,\varepsilon}, \mu_y^{\sigma,\varepsilon}) \leq \int_{\tilde{B}_{\sigma,\varepsilon}} \|T(\phi(\alpha,\beta) - \phi(\alpha,\beta)\| (\phi_*^{-1} \mu_{x_0}^{\sigma,\varepsilon})(d\alpha,d\beta)
$$
up to and including third order terms.
Using the product of expressions \eqref{test-function-density} and \eqref{pointwise-transport-magnitude}, this amounts to integrating a quadratic polynomial in $\alpha,\beta$. First, as terms with odd power in one of the coordinates vanish, we simplify the integral to
$$
\begin{aligned}
&\int_{\tilde{B}_{\sigma,\varepsilon}} \|T(\phi(\alpha,\beta) - \phi(\alpha,\beta)\| (\phi_*^{-1} \mu_{x_0}^{\sigma,\varepsilon})(d\alpha,d\beta)\\
&= \delta \int_{\tilde{B}_{\sigma,\varepsilon}} \left( 1 - \f{\delta^2}{6} \sum_{i=1}^k \II_{i11}(\mathbf{0})^2  + \f{1}{2} \sum_{j=1}^m \alpha_j^2 \sum_{i=1}^k \big(\II_{ij 1}(\mathbf{0})^2 - \II_{i11}(\mathbf{0}) \II_{ijj}(\mathbf{0})\big)\right.\\
& \hspace{2.4cm} \left. + \sum_{i=1}^k \beta_i^2 \left( \sum_{j=1}^m \II_{i11}(\mathbf{0}) \II_{ijj}(\mathbf{0}) + \f{1}{2} \sum_{j=2}^m \II_{ij1}(\mathbf{0})^2 \right) \right) \; d\alpha \; d\beta + O(\delta^4).
\end{aligned}
$$
We now use the fact that the average integral of the square of any coordinate over a $d$-dimensional ball of arbitrary radius $r >0$ is
$$
\dashint_{B^d_r} x^2_i dx_1 \ldots dx_d = \frac{1}{|B_r^d|d} \int_0^r |\partial B^{d}_s| s^2 ds = \f{1}{r^d} \int_0^r s^{d+1}ds = \frac{r^2}{d+2},
$$
where $\dashint_{B^d_r}$ denotes the integral normalised by the volume of the ball and using that
$$
|B_r^d| = \frac{\pi^{\frac{d}{2}}}{\Gamma\left(\frac{d}{2}+1\right)} r^{d}, \quad |\partial B_{s}^{d}|=\frac{2 \pi^{\frac{d}{2}}}{\Gamma\left(\frac{d}{2}\right)} s^{d-1}.
$$
This in particular gives
$$
\dashint_{\tilde{B}_{\sigma,\varepsilon}} \alpha_j^2 \; d\alpha \; d\beta = \f{\varepsilon^2}{m+2}, \quad \dashint_{\tilde{B}_{\sigma,\varepsilon}} \beta_i^2 \; d\alpha \; d\beta = \f{\sigma^2}{k+2}.
$$
Then
$$
\begin{aligned}
&\int_{\tilde{B}_{\sigma,\varepsilon}} \|T(\phi(\alpha,\beta) - \phi(\alpha,\beta)\| (\phi_*^{-1} \mu_{x_0}^{\sigma,\varepsilon})(d\alpha,d\beta) \\
&= \delta \left(1 - \f{\delta^2}{24} \sum_{i=1}^k \II_{i11}(\mathbf{0})^2 + \left( \f{\sigma^2}{k+2} - \f{\varepsilon^2}{2(m+2)} \right) \sum_{i=1}^k  \sum_{j=1}^m \II_{i11}(\mathbf{0}) \II_{ij j}(\mathbf{0}) \right) + O(\delta^4) \\
&= \delta \left(1 - \f{\delta^2}{24} \sum_{i=1}^k \II_{i11}(\mathbf{0})^2 + \left( \f{\sigma^2}{k+2} - \f{\varepsilon^2}{2(m+2)} \right) \<\II_{x_0}(e_1,e_1), H(x_0)\>\right) + O(\delta^4).
\end{aligned}
$$
Furthermore, from \eqref{pointwise-transport-magnitude} for $\alpha = 0, \beta=0$ we deduce
$$
\|x_0-y\| = \delta \left(1 - \f{\delta^2}{24} \sum_{i=1}^k \II_{i11}(\mathbf{0})^2  \right) + O(\delta^4).
$$
Therefore, we can rewrite in terms of the Euclidean distance:
$$
\begin{aligned}
W_1(\mu_{x_0}^{\sigma,\varepsilon}, \mu_y^{\sigma,\varepsilon}) &\leq \int_{\tilde{B}_{\sigma,\varepsilon}} \|T(\phi(\alpha,\beta) - \phi(\alpha,\beta)\| (\phi_*^{-1} \mu_{x_0}^{\sigma,\varepsilon})(d\alpha,d\beta) \\
&=\|x_0-y\| \left( 1  +\left( \f{\sigma^2}{k+2} - \f{\varepsilon^2}{2(m+2)}\right) \<\II_{x_0}(e_1,e_1), H_{x_0}\>) \right)+ O(\delta^4).
\end{aligned}
$$
We now address the lower bound. Denoting
$$
p(\hat{\alpha},\beta) :=
\f{\phi(\delta,\hat{\alpha},\beta)-\phi(0,\hat{\alpha},\beta)}{\|\phi(\delta,\hat{\alpha},\beta)-\phi(0,\hat{\alpha},\beta)\|},
$$
emphasizing that this vector does not depend on $\alpha_1$, we propose
$$
\begin{aligned}
f(\phi(\alpha, \beta)) := \< \phi(\alpha,\beta)- x_0, p(\hat{\alpha},\beta) \>
\end{aligned}
$$
as the test function for Kantorovich-Rubinstein duality, with the intention of applying \cref{test-function-gradient} to conclude the upper bound is also a lower bound up to $O(\delta^4)$.
We deduce from \eqref{pointwise-transport-vector} that
\begin{equation}
\label{projection-vector-general}
\begin{aligned}
p(\hat{\alpha},\beta) 
&= \left(1 - \f{\delta^2}{8} \sum_{i=1}^k \II_{i11}(\mathbf{0})^2 + O(\delta^3) \right)e_1(0) \\
& \quad + \sum_{j=2}^m O(\delta^2) e_j(0) + \sum_{i=1}^k \left( \f{\delta}{2} \II_{i11}(\mathbf{0}) + O(\delta^2)  \right) \mathbf{n}_i(\mathbf{0}).
\end{aligned}
\end{equation}
Then it can be verified, using the expansions \eqref{pointwise-transport-vector} and \eqref{projection-vector-general} to compute the inner product up to and including third order terms, that
$$
\begin{aligned}
f(T(\phi(\alpha,\beta))) - f(\phi(\alpha,\beta))
&= \<T(\phi(\alpha,\beta)) - \phi(\alpha,\beta) + O(\delta^4), p(\hat{\alpha},\beta)\> \\
&= \|T(\phi(\alpha,\beta)) - \phi(\alpha,\beta)\| + O(\delta^4)
\end{aligned}
$$
by comparison with \eqref{pointwise-transport-magnitude}.

Finally, we wish to compute the square magnitude of the gradient of the test function in order to verify that its supremum over $B_{2\delta}(x_0)$ is $1+O(\delta^3)$ for \cref{test-function-gradient} to apply.
For this we need to establish the Riemannian metric in Fermi coordinates 
$g_{ij} = \<\partial_{\alpha_i}\phi, \partial_{\alpha_j} \phi\>.$
The first derivatives of the Fermi chart are deduced by differentiating \eqref{general-fermi-chart-expansion} as 
$$
\begin{aligned}
\partial_{\alpha_1} \phi(\alpha,\beta) &= \left( 1 - \sum_{i=1}^k \beta_i \II_{i11}(\mathbf{0}) - \f{1}{2} \sum_{i=1}^k \sum_{r,\ell=2}^m \alpha_r \alpha_\ell \II_{ir\ell}(\mathbf{0}) \II_{i 11}(\mathbf{0})   + O(\delta^3) \right) e_1(0)\\
& \quad + \sum_{\ell=2}^m O(\delta^2) e_\ell(0) + \sum_{i=1}^k \left( \alpha_1 \II_{i1 1}(\mathbf{0}) + O(\delta^2)\right)\mathbf{n}_i(\mathbf{0}),\\
\partial_{\alpha_j} \phi(\alpha,\beta) &= \sum_{\ell=2}^m \left(\delta_{\ell j} -\sum_{i=1}^k \beta_i \II_{i\ell j}(\mathbf{0}) + O(\delta^2) \right) e_\ell(0) \\
& \qquad + \sum_{i=1}^k \left( \sum_{\ell=2}^m \alpha_\ell \II_{i\ell j}(\mathbf{0}) + O(\delta^2)\right)\mathbf{n}_i(\mathbf{0}) \quad \textrm{for } 2 \leq j \leq m,\\
\partial_{\beta_i} \phi(\alpha,\beta) &= - \sum_{j=2}^m \left( \sum_{\ell=2}^m \alpha_\ell \II_{i \ell j}(\mathbf{0})+ O(\delta^2) \right) e_j(0) + \sum_{r=1}^k \left(\delta_{ir} + O(\delta^2) \right) \mathbf{n}_i(\mathbf{0}).
\end{aligned}
$$
Then the entries of the inverse metric matrix are computed from these to be
\begin{equation}
\label{metric-matrix-entries}
\begin{aligned}
g_{11}(\phi(\alpha,\beta)) &= 1 - 2 \sum_{i=1}^k \beta_i \II_{i11}(\mathbf{0})  + \sum_{i,r=1}^k \beta_i \beta_r \II_{i1 1}(\mathbf{0})^2 \\
& \qquad - \sum_{i=1}^k \sum_{r,\ell=2}^m \alpha_r \alpha_\ell \II_{ir\ell}(\mathbf{0}) \II_{i11}(\mathbf{0}) + O(\delta^3),\\
g_{j \ell}(\phi(\alpha,\beta)) &= \delta_{j\ell} - 2 \sum_{i=1}^k \beta_i \II_{i\ell j}(\mathbf{0}) + O(\delta^2) \quad \textrm{for } j, \ell \leq m,\\
g_{ij}(\phi(\alpha,\beta)) &= O(\delta^2) \quad \textrm{for } m+1 \leq i \leq m+k, \; j \leq m,\\
g_{ir}(\phi(\alpha,\beta)) &= \<\mathbf{n}_i(\alpha), \mathbf{n}_r(\alpha)\> = \delta_{ir} \quad \textrm{for } m+1 \leq i, \; r \leq m + k.
\end{aligned}
\end{equation}
Note that $\partial_{\alpha_1}\phi$ and $g_{11}$ needed to be expanded up to second order due to the particular role of the first coordinate. For the rest, expansion up to first order is sufficient.
The above means the metric matrix has the block structure
$$
G = 
\begin{pmatrix}
    (g_{j \ell})_{j,\ell \leq m} & O(\delta^2) \\
    O(\delta^2) & I_k
\end{pmatrix}.
$$
In particular, denoting 
$$
\begin{aligned}
a_{j\ell} &= -2 \sum_{i=1}^k \beta_i \II_{ij\ell}(\mathbf{0}), \\
b &= \sum_{\ell=1}^m \sum_{i,r=1}^k \beta_i \beta_r \II_{i\ell 1}(\mathbf{0}) \II_{r\ell 1}(\mathbf{0}) - \sum_{i=1}^k \sum_{r,\ell=2}^m \alpha_r \alpha_\ell \II_{ir\ell}(\mathbf{0}) \II_{i11}(\mathbf{0})
\end{aligned}
$$
and the matrix
$$
A = \begin{pmatrix}
    a_{11} + b + O(\delta^3) & O(\delta^2) & \hdots & O(\delta^2)\\
    O(\delta^2) & a_{22} + O(\delta^2) &  & \vdots \\
    \vdots & & \ddots & \\
    O(\delta^2) & \hdots &  & a_{mm} + O(\delta^2)
\end{pmatrix},
$$
having used that $a_{1j} =0$ for $j=2,\ldots,m$ as $\II_{ij1}(\mathbf{0})=0$ by assumption, we can write
$$
G= I_{m+k}+ \begin{pmatrix}A & O(\delta^2) \\ O(\delta^2) & \mathbf{0} \end{pmatrix}.
$$
Noting that the second matrix is $O(\delta)$, the expansion of its inverse is
$$
G^{-1} = \begin{pmatrix}
    I_m - A + A^2 + O(\delta^3) & O(\delta^2) \\
    O(\delta^2) & I_k + O(\delta^4)
\end{pmatrix}
$$
due to the block structure. Computing
$$
(A^2)_{j\ell} = \sum_{q=1}^m a_{j q} a_{\ell q} +O(\delta^3)= 4 \sum_{q=1}^m \sum_{i,r=1}^k \beta_i \beta_r \II_{ijq}(\mathbf{0}) \II_{r\ell q}(\mathbf{0}) + O(\delta^3),
$$
we deduce
\begin{equation}
\begin{aligned}
g^{11}(\phi(\alpha,\beta)) &= 1 -a_{11} + a_{11}^2 - b + O(\delta^3)\\
&= 1 + 2 \sum_{i=1}^k \beta_i \II_{i11}(\mathbf{0}) +3 \sum_{i,r=1}^k \beta_i \beta_r \II_{i1 1}(\mathbf{0}) \II_{r1 1}(\mathbf{0}) \\
& \qquad + \sum_{i=1}^k \sum_{j,\ell=2}^m \alpha_j \alpha_\ell \II_{ij\ell}(\mathbf{0}) \II_{i11}(\mathbf{0}) + O(\delta^3)
\end{aligned}
\end{equation}
by plugging in for $a_{j \ell}$ and $b$, and also
\begin{equation}
g^{j\ell}(\phi(\alpha,\beta)) = \delta_{j \ell} + 2 \sum_{i=1}^k \beta_i \II_{ij\ell}(\mathbf{0}) + O(\delta^2) \quad \forall j,l \leq m.
\end{equation}
We remark that for $j, \ell \geq 2$ the expansion of $g^{j\ell}$ up to the linear term suffices for the computations to follow, while the expansion of $g^{11}$ up to second order is necessary.

We now compute the expansions of the derivatives of the test function. The first derivatives of the projection vector field in coordinates can be computed from \eqref{projection-vector-general} as
$$
\begin{aligned}
\partial_{\alpha_j} p(\hat{\alpha},\beta) &= O(\delta) \quad \forall 2 \leq j \leq m, \\
\partial_{\beta_i} p(\hat{\alpha}, \beta)  &= O(\delta) \quad \forall 1 \leq i \leq k.
\end{aligned}
$$
Then computing the inner products, using \eqref{general-fermi-chart-expansion} for the derivatives of the chart,
$$
\begin{aligned}
    \partial_{\alpha_1} (f\circ \phi)(\alpha,\beta) &= \< \partial_{\alpha_1} \phi(\alpha,\beta), p(\hat{\alpha},\beta)\> \\
    &= 1 - \sum_{i=1}^k \beta_i \II_{i11}(\mathbf{0}) - \f{1}{2} \sum_{i=1}^k \sum_{j,\ell=2}^m \alpha_j \alpha_\ell \II_{ij\ell}(\mathbf{0}) \II_{i 11}(\mathbf{0}) \\
    & \qquad + \f{\alpha_1 \delta}{2} \sum_{i=1}^k \II_{i11}(\mathbf{0})^2 - \f{\delta^2}{8} \sum_{i=1}^k \II_{i11}(\mathbf{0})   + O(\delta^3),
    \end{aligned}
$$
    and for $ 2 \leq j \leq m$, 
    $$
\begin{aligned}
    \partial_{\alpha_j} (f \circ \phi)(\alpha,\beta) &= \langle \partial_{\alpha_j} \phi(\alpha,\beta), p(\hat{\alpha},\beta) \rangle + \< \phi(\alpha,\beta) - x_0, \partial_{\alpha_j} p(\hat{\alpha},\beta)\> \\
    &=  O(\delta^2),
\end{aligned}
$$
and for $i \leq k$, 
  $$
\begin{aligned}
    \partial_{\beta_i} ( f \circ \phi)(\alpha,\beta) &= \<\partial_{\beta_i} \phi(\alpha,\beta), p(\hat{\alpha},\beta)\> + \< \phi(\alpha,\beta) - x_0, \partial_{\beta_i} p(\hat{\alpha},\beta)\> \\
    &= - \alpha_1 \II_{i 1 1}(\mathbf{0}) + \f{\delta}{2} \II_{i11}(\mathbf{0}) + O(\delta^2).\\
\end{aligned}
$$
We wish to compute
$$
\begin{aligned}
\|\nabla f(\phi(\alpha,\beta))\|^2 &= \sum_{j,\ell=1}^m g^{j\ell}(\phi(\alpha,\beta)) \partial_{\alpha_j}(f \circ \phi)(\alpha,\beta) \partial_{\alpha_\ell} (f \circ \phi)(\alpha,\beta)\\
& \quad + 2 \sum_{i=1}^{k} \sum_{j=1}^m g^{m+i,j}(\phi(\alpha,\beta)) \partial_{\beta_i} (f \circ \phi)(\alpha,\beta) \partial_{\alpha_j} (f \circ \phi)(\alpha,\beta) \\
& \quad + \sum_{i=1}^{k} (\partial_{\beta_i} (f \circ \phi)(\alpha,\beta))^2.
\end{aligned}
$$
The individual summands are
$$
\begin{aligned}
(g^{11}\circ \phi) (\partial_{\alpha_1}(f \circ \phi))^2 &= 1 - \left(\alpha_1^2  - \alpha_1 \delta + \f{\delta^4}{4}\right) \sum_{i=1}^k \II_{i11}(\mathbf{0})^2 + O(\delta^3),\\
(g^{j\ell} \circ \phi) \partial_{\alpha_j} (f\circ \phi) \partial_{\alpha_\ell} (f\circ \phi) &= O(\delta^3) \textrm{ for } 1 \leq j\leq m, 2 \leq \ell \leq m,\\
(g^{m+i,j} \circ \phi) \partial_{\beta_i} (f \circ \phi) \partial_{\alpha_j} (f \circ \phi) &= O(\delta^3) \textrm{ for } i \leq k, j \leq m,\\
(\partial_{\beta_i} (f \circ \phi))^2 &= \left(\alpha_1^2  - \alpha_1 \delta + \f{\delta^4}{4}\right) \II_{i11}(\mathbf{0})^2 + O(\delta^3) \textrm{ for } i \leq k.
\end{aligned}
$$
All first and second order terms vanish upon summation, hence we may conclude that $\|\nabla f(\phi(\alpha,\beta))\|^2 = 1 + O(\delta^3)$ as required.
\end{proof}

\section{Applications}
\label{section:applications}
\subsection{Poisson point processes on manifolds}
In applications one may wish to recover curvature information from coarse curvature of a random point cloud represented by a Poisson point process. Such an approach has already been investigated in \cite{hoorn-2023} and \cite{arnaudon2023coarse} for the Ricci curvature and generalised Ricci curvature, respectively.

We first recall the definition of a Poisson point process. Let $(\mathcal{X}, \mathcal{B}, \mu)$ be a $\sigma$-finite measure space, $\mathcal{M}(\mathcal{X})$ the set of measures on $\mathcal{X}$ and $(\Omega, \mathcal{F}, \mathbb{P})$ a probability space.

\begin{definition}
\label{poisson-point-process}
A Poisson point process on $\mathcal{X}$ with intensity measure $\mu$  is a random measure $\mathcal{P}: \Omega \times \mathcal{B} \rightarrow [0,\infty]$ (equivalently $\mathcal{P}: \Omega \rightarrow \mathcal{M}(\mathcal{X})$) such that the following three properties hold:
\begin{enumerate}
    \item For all $\mu$-finite measurable sets $A \in \mathcal{B}$: $\mathcal{P}(\cdot,A)$ is a $\textrm{Poisson}(\mu(A))$ random variable,
    \item For all disjoint, measurable $\mu$-finite sets $A,B \in \mathcal{B}$: $\mathcal{P}(\cdot,A)$ and $\mathcal{P}(\cdot, B)$ are independent random variables,
    \item For all $\omega \in \Omega$: $\mathcal{P}(\omega, \cdot)$ is a measure on $\mathcal{X}$.
\end{enumerate}
\end{definition}

It turns out (see \cite[Chap. 6]{MR3791470}) that all Poisson point processes with a finite intensity measure take the form of a random empirical measure, i.e.
$$
\label{poisson-process-deltas}
\mathcal{P}(\omega,\cdot) =  \sum_{i=1}^{N(\omega)} \delta_{X_i(\omega)}
$$
where $N$ is a $\textrm{Poisson}(\mu(\mathcal{X}))$ random variable, $(X_i)_{i\in \mathbb{N}}$ are independent $\mu$-distributed random variables on $\mathcal{X}$ and $(X_i)_{i\in \mathbb{N}}, N$ are independent. Denote the random set of points thus generated by $\mathcal{P}$ as
$$
\mathcal{V}(\omega) = \{X_{i}(\omega) : 1\leq i \leq N(\omega)\}.
$$

\begin{notation}
Let $(\mathcal{P}_n)_{n\in \mathbb{N}}$ be a sequence of Poisson point processes on the ambient space $\R^{m+k}$ with uniform intensity measure $n \vol_{\R^{m+k}}(dz)$. Denote by $\mathcal{V}_n(\omega) \subset \mathbb{R}^{m+k}$ the discrete random set of points generated by $\mathcal{P}_n$. Let $x_0 \in M$,  $(\delta_n)_{n \in \mathbb{N}}, (\sigma_n)_{n \in \mathbb{N}}, (\varepsilon_n)_{n \in \mathbb{N}}$ sequences of positive reals and $y_n := \exp_{x_0}(\delta_n v)$ for a fixed unit vector $v \in T_{x_0}M$. As the discrete counterpart to the test measures $\mu_x^{\sigma,\varepsilon}$, for any point $x \in M$ denote the random empirical measures adapted to the submanifold, 
$$
\eta^{\sigma_n, \varepsilon_n}_x (z) = \begin{cases}
    \f{1}{\# (B_{\sigma_n,\varepsilon_n}(x) \cap \mathcal{V}_n)} & \textrm{ if } z \in B_{\sigma_n,\varepsilon_n}(x) \cap \mathcal{V}_n\\
    0 & \textrm{ otherwise}.
\end{cases}
$$
If $\sigma_n \vee \varepsilon_n \leq \f{\delta_n}{4}$ then
$
B_{\sigma_n,\varepsilon_n}(x_0) \cup B_{\sigma_n, \varepsilon_n}(y_n) \subset x_0 + [-2\delta_n,2\delta_n]^{m+k}
$.
\end{notation}

Using the following result proved in \cite[Corollary 3]{hoorn-2023}, it is possible to quantify the approximation of the test measures by the empirical measures in the Wasserstein metric:
\begin{lemma}
For all $n \in \mathbb{N}$, it holds that
\begin{equation}
\label{empirical-approximation}
\sup_{x \in B_{\delta_n}(x_0)} \E[W_1(\eta_{x}^{\sigma_n, \varepsilon_n}, \mu_x^{\sigma_n, \varepsilon_n})] = O\left(\log(n) n^{-\f{1}{m+k}}\right).
\end{equation}
\end{lemma}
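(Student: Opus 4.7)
The plan is to apply \cite[Corollary 3]{hoorn-2023} after reducing to a standard iid empirical measure setting by means of the restriction property of Poisson point processes. By this property, the trace $\mathcal{V}_n \cap B_{\sigma_n, \varepsilon_n}(x)$ is itself a Poisson point process on $\R^{m+k}$ with intensity $n \, \one_{B_{\sigma_n, \varepsilon_n}(x)} \vol_{\R^{m+k}}$; conditional on its count $N_n(x) := \#(\mathcal{V}_n \cap B_{\sigma_n,\varepsilon_n}(x))$, the enclosed points are iid uniform on $B_{\sigma_n, \varepsilon_n}(x)$, that is, distributed as $\mu_x^{\sigma_n, \varepsilon_n}$. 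Consequently, $\eta_x^{\sigma_n, \varepsilon_n}$ is, conditionally on $N_n(x)$, precisely the empirical measure of $N_n(x)$ iid samples from $\mu_x^{\sigma_n, \varepsilon_n}$.

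Next I would invoke the sharp Wasserstein rate for an iid empirical measure against its parent measure on a set of diameter $O(\max(\sigma_n, \varepsilon_n)) = O(\delta_n)$ in ambient dimension $d = m+k$, conditional on $N_n(x)$, and then integrate against the Poisson law of $N_n(x)$. Since $\E[N_n(x)] = n \vol(B_{\sigma_n, \varepsilon_n}(x)) \asymp n \sigma_n^k \varepsilon_n^m$ uniformly for $x$ in a fixed neighborhood of $x_0$ within the reach of $M$, together with standard bounds on negative moments of a Poisson variable, this yields a bound of order $n^{-1/(m+k)}$ together with a logarithmic correction that comes either from the empirical rate itself (when $d=2$) or from the tail behaviour of $N_n(x)$. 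Uniformity over $x \in B_{\delta_n}(x_0)$ is then automatic because the diameter and volume of $B_{\sigma_n,\varepsilon_n}(x)$ depend continuously on $x$ and are uniformly bounded above and below on this neighborhood.

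The main obstacle is tracking the precise logarithmic factor, which requires delicate handling of the event where $N_n(x)$ is atypically small together with the dyadic partitioning used in quantitative empirical $W_1$ estimates (e.g.\ Dereich--Scheutzow--Schottstedt or Weed--Bach bounds); this is exactly the content of \cite[Corollary 3]{hoorn-2023}. Once the count is controlled, the event $\{N_n(x) = 0\}$ contributes only exponentially in $n$ to the expectation, and the remaining estimate is routine.
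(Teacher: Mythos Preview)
Your proposal is correct and in fact more detailed than the paper: the paper does not give a proof of this lemma at all, but simply attributes it to \cite[Corollary~3]{hoorn-2023}. You cite the same result and additionally sketch the underlying mechanism (restriction property, conditioning on the Poisson count, empirical $W_1$ rates, uniformity in $x$), which is exactly how that corollary is obtained.
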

We may then deduce that coarse curvature of point clouds with the empirical measures as test measures has the same limit as coarse extrinsic curvature if the intensity of the point process increases fast enough relative to the parameter $\delta_n$. Denote
$$
\hat{\kappa}_{\sigma_n, \varepsilon_n}(x_0, y_n) =1 - \f{W_1(\eta_{x_0}^{\sigma_n,\varepsilon_n}, \eta_{y_n}^{\sigma_n,\varepsilon_n})}{\delta_n}, \quad \kappa_{\sigma_n,\varepsilon_n}(x_0,y_n) = 1 - \f{W_1(\mu_{x_0}^{\sigma_n,\varepsilon_n}, \mu_{y_n}^{\sigma_n,\varepsilon_n})}{\delta_n}.
$$
This leads immediately to a corollary of \cref{general-submanifold}:
\begin{proposition}
    \label{point-cloud-curvature}
    Under the assumptions of \cref{general-submanifold}, if the sequences $(\delta_n)_{n \in \mathbb{N}}$, $(\sigma_n)_{n \in \mathbb{N}}$ and $(\varepsilon_n)_{n \in \mathbb{N}}$ satisfy $\sigma_n \vee \varepsilon_n \leq \f{\delta_n}{4}$ and $\log(n) n^{-\f{1}{m+k}}=o(\delta_n^3)$, then
    $$
    \lim_{n \rightarrow \infty} \f{1}{\delta_n^2} \E\left[\left|\hat{\kappa}_{\sigma_n,\varepsilon_n}(x_0, y_n) - \left(\f{\varepsilon_n^2}{2(m+2)} - \f{\sigma_n^2}{k+2} \right) \<\II_{x_0}(e_1,e_1), H(x_0)\> \right| \right] = 0.
    $$
\end{proposition}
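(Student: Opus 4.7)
The plan is to reduce the random-measure statement to the deterministic one via the triangle inequality for $W_1$, then combine the probabilistic approximation estimate \eqref{empirical-approximation} with \cref{general-submanifold}. Concretely, I would write
$$
\left| \hat\kappa_{\sigma_n,\varepsilon_n}(x_0,y_n) - \kappa_{\sigma_n,\varepsilon_n}(x_0,y_n) \right|
\leq \f{1}{\delta_n} \left| W_1(\eta_{x_0}^{\sigma_n,\varepsilon_n}, \eta_{y_n}^{\sigma_n,\varepsilon_n}) - W_1(\mu_{x_0}^{\sigma_n,\varepsilon_n}, \mu_{y_n}^{\sigma_n,\varepsilon_n}) \right|,
$$
and bound the right-hand side above by $\delta_n^{-1}\bigl(W_1(\eta_{x_0}^{\sigma_n,\varepsilon_n},\mu_{x_0}^{\sigma_n,\varepsilon_n}) + W_1(\eta_{y_n}^{\sigma_n,\varepsilon_n},\mu_{y_n}^{\sigma_n,\varepsilon_n})\bigr)$ by the reverse triangle inequality.

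Taking expectations and invoking \eqref{empirical-approximation} at both base points $x_0$ and $y_n$, I would obtain
$$
\E\left[\left| \hat\kappa_{\sigma_n,\varepsilon_n}(x_0,y_n) - \kappa_{\sigma_n,\varepsilon_n}(x_0,y_n) \right|\right] = O\!\left( \f{\log(n)\, n^{-1/(m+k)}}{\delta_n} \right),
$$
and the standing hypothesis $\log(n)\,n^{-1/(m+k)} = o(\delta_n^3)$ then upgrades this to $o(\delta_n^2)$, which vanishes after dividing by $\delta_n^2$.

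For the deterministic remainder, \cref{general-submanifold} applies at the scale $\delta_n$ (the condition $\sigma_n \vee \varepsilon_n \leq \delta_n/4$ is assumed), giving $W_1(\mu_{x_0}^{\sigma_n,\varepsilon_n}, \mu_{y_n}^{\sigma_n,\varepsilon_n}) = \|y_n-x_0\|\bigl(1 + (\f{\sigma_n^2}{k+2} - \f{\varepsilon_n^2}{2(m+2)})\<\II_{x_0}(e_1,e_1),H(x_0)\>\bigr) + O(\delta_n^4)$. Dividing by $\delta_n$ and using $\|y_n-x_0\| = \delta_n + O(\delta_n^3)$ yields
$$
\kappa_{\sigma_n,\varepsilon_n}(x_0,y_n) = \left(\f{\varepsilon_n^2}{2(m+2)} - \f{\sigma_n^2}{k+2}\right)\<\II_{x_0}(e_1,e_1), H(x_0)\> + O(\delta_n^3),
$$
so after dividing by $\delta_n^2$ this contribution is $O(\delta_n)=o(1)$. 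Adding the two estimates by a final triangle inequality on the quantity inside the expectation closes the argument.

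The main obstacle is essentially bookkeeping: ensuring that the empirical-to-test-measure error, which is the only stochastic piece, is genuinely $o(\delta_n^2)$ after scaling. The growth condition $\log(n)\,n^{-1/(m+k)} = o(\delta_n^3)$ in the hypotheses is precisely what is needed for this: it forces $\delta_n$ to tend to zero slowly enough (relative to the point-cloud density $n$) that the Monte-Carlo-type fluctuation of order $\log(n)\,n^{-1/(m+k)}$ on the Wasserstein distance is negligible compared to the signal $\delta_n^3$ contributing to the curvature expansion at the relevant order. No delicate concentration argument is required beyond the stated lemma; the rest is the triangle inequality in $W_1$ applied twice, together with the already-established asymptotics of \cref{general-submanifold}.
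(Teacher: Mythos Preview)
Your proposal is correct and follows essentially the same approach as the paper: split via the triangle inequality into the stochastic error $\E|\hat\kappa-\kappa|$, control it by the reverse triangle inequality for $W_1$ together with \eqref{empirical-approximation} and the hypothesis $\log(n)n^{-1/(m+k)}=o(\delta_n^3)$, and handle the deterministic piece by \cref{general-submanifold}. If anything, you are slightly more explicit than the paper in tracking the $O(\delta_n^3)$ remainder coming from $\|y_n-x_0\|=\delta_n+O(\delta_n^3)$.
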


\begin{proof}
By the triangle inequality and \eqref{empirical-approximation},
    $$
    \begin{aligned}
    \E\left[|\hat{\kappa}_{\sigma_n,\varepsilon_n}(x_0, y_n) - \kappa_{\sigma_n,\varepsilon_n}(x_0,y_n)| \right] &= \f{1}{\delta_n} \E\left[ |W_1(\eta_{x_0}^{\sigma_n,\varepsilon_n}, \eta_{y_n}^{\sigma_n,\varepsilon_n}) - W_1(\mu_{x_0}^{\sigma_n,\varepsilon_n}, \mu_{y_n}^{\sigma_n,\varepsilon_n})| \right]
    \\
    &\leq \f{1}{\delta_n} \E[W_1(\eta_{x_0}^{\sigma_n,\varepsilon_n}, \mu_{x_0}^{\sigma_n,\varepsilon_n}) + W_1(\eta_{y}^{\sigma_n,\varepsilon_n}, \mu_{y}^{\sigma_n,\varepsilon_n})] \\
     &= \f{1}{\delta_n} O\left(\log (n) n^{-\f{1}{m+k}}\right) = o(\delta_n^2).
     \end{aligned}
    $$
    At the same time, from \cref{general-submanifold} we have
    $$
    \kappa_{\sigma_n, \varepsilon_n}(x_0,y_n) = \left(\f{\varepsilon_n^2}{2(m+2)} - \f{\sigma_n^2}{k+2} \right) \<\II_{x_0}(e_1,e_1), H(x_0)\>,
    $$
    which gives the final result upon substitution and taking the limit as $n \rightarrow \infty$.
\end{proof}

\subsection{Retrieving mean curvature}
\cref{general-submanifold} could in practice be exploited in the two settings already alluded to in the introduction, which considered the planar curve case for illustrative purposes. In the scope of generality of \cref{general-submanifold}, we have
$$
\lim_{\substack{\sigma,\varepsilon \leq \delta/4 \\ \delta \rightarrow 0}} \left( \f{\varepsilon^2}{2(m+2)} - \f{\sigma^2}{k+2}\right)^{-1} \left(1- \f{W_1(\mu_{x_0}^{\sigma,\varepsilon}, \mu_y^{\sigma,\varepsilon})}{\|y-x_0\|} \right) = \<\II_{x_0}(e_1,e_1), H(x_0)\>.
$$
In particular, we may distinguish two limit regimes:
\begin{enumerate}[label=(\arabic*)]
    \item Assuming $\sigma= \Theta(\delta)$ and $\varepsilon = o(\sigma)$, 
    $$
    \begin{aligned}
    -\lim_{\delta \rightarrow 0} \f{k+2}{\sigma^2}\left(1- \f{W_1(\mu_{x_0}^{\sigma,\varepsilon}, \mu_y^{\sigma,\varepsilon})}{\|y-x_0\|} \right) = \<\II_{x_0}(e_1,e_1), H(x_0)\>.
    \end{aligned}
    $$
    This represents a situation where one can obtain a sample from the ambient measure in a tubular neighbourhood of the surface.
    Decreasing $\varepsilon$ corresponds to localization of the geometric information thus retrieved. 
    
    \item Assuming $\varepsilon = \Theta(\delta)$ and $\sigma =o(\varepsilon)$,
    $$
    \lim_{\delta \rightarrow 0} \f{2(m+2)}{\varepsilon^2}\left(1- \f{W_1(\mu_{x_0}^{\sigma,\varepsilon}, \mu_y^{\sigma,\varepsilon})}{\|y-x_0\|} \right) = \<\II_{x_0}(e_1,e_1), H(x_0)\>.
    $$
    In this case, we have a noisy sample from the surface and obtain convergence of the coarse extrinsic curvature under attenuation of the noise as $\sigma$ decreases. 
\end{enumerate}
Note that these expressions depend on the vector $v$ with $y_\delta = \exp_{M, x_0}(\delta v)$. We can remove this directionality by adding up coarse curvatures in all directions of an orthonormal frame at $x_0$, thus obtaining an expression involving the mean curvature.

Denote the square norm of the mean curvature vector as
$$
\|H(x_0)\|^2 = \sum_{i=1}^k \<H(x_0), \mathbf{n}_i(x_0)\>^2
$$
for an arbitrary orthonormal basis $(\mathbf{n}_i(x_0))_{i=1}^k$ of the normal space $T_{x_0}M^\perp \subset T_{x_0}N$.

\coarsemeancurvature

\begin{proof}
    We express the coarse curvatures using the expansion of \cref{general-submanifold} and sum up, noting that $j = 1,\ldots,m$ indexing each direction plays the role of the first coordinate, 
    $$
    \begin{aligned}
    \sum_{j=1}^m \left( 1- \f{W_1(\mu_{x_0}^{\sigma,\varepsilon}, \mu_{y_j}^{\sigma,\varepsilon})}{\|x_0-y_j\|}\right) &= \left( \f{\varepsilon^2}{2(m+2)} - \f{\sigma^2}{k+2} \right)\sum_{j=1}^m \< \II_{x_0}(e_j,e_j), H(x_0)\> + O(\delta^3) \\
    & = \left( \f{\varepsilon^2}{2(m+2)} - \f{\sigma^2}{k+2} \right) \|H(x_0)\|^2 +O(\delta^3),
    \end{aligned}
    $$
    completing the proof.
\end{proof}
This implies that given the family of coarse curvatures
$$
\left\{ 1- \f{W_1(\mu_{x_0}^{\sigma,\varepsilon}, \mu_{y_j}^{\sigma,\varepsilon})}{\|x_0-y_j\|} : \sigma, \varepsilon, \delta > 0, j = 1,\ldots, m \right\},
$$
one can retrieve the square magnitude of the mean curvature vector of the surface at $x_0$ as
$$
\lim_{\substack{\sigma,\varepsilon \leq \delta/4\\ \delta \rightarrow 0}} \left( \f{\varepsilon^2}{2(m+2)} - \f{\sigma^2}{k+2} \right)^{-1} \sum_{j=1}^m \left( 1- \f{W_1(\mu_{x_0}^{\sigma,\varepsilon}, \mu_{y_j}^{\sigma,\varepsilon})}{\|x_0-y_j\|}\right)  = \|H_{x_0}\|^2.
$$

In conclusion, we introduced the notion of coarse extrinsic curvature of Riemannian submanifolds embedded isometrically in a Euclidean space and verified that in a scaled limit of the parameters we retrieve meaningful geometric information about the submanifold. As illustrative examples, in the case of a curve we retrieve the inverse squared radius of the osculating circle at a given point, while in the case of a 2-surface we obtain an expression in terms of the second fundamental form and mean curvature. Such coarse extrinsic curvatures can be combined to yield the square magnitude of the mean curvature as a scaled limit.

\section*{Funding declaration}

Xue-Mei Li acknowledges
support from Engineering and Physical Sciences Research Council grant EP/V026100/1; Benedikt Petko was supported by the Engineering and Physical Sciences Research Council Centre for Doctoral Training in Mathematics of Random Systems: Analysis, Modelling 
and Simulation (EP/S023925/1).

\bibliographystyle{unsrt}
\bibliography{CoarseExtrinsicCurvature}

\end{document}